\newcommand\xqed[1]{%
  \leavevmode\unskip\penalty9999 \hbox{}\nobreak\hfill
  \quad\hbox{#1}}
\newcommand\dssymb{\xqed{\small //}}
\newtheorem{theorem}{Theorem}[section]
\newtheorem{lemma}[theorem]{Lemma}
\newtheorem{proposition}[theorem]{Proposition}
\newtheorem{corollary}[theorem]{Corollary}
\theoremstyle{remark}
\newtheorem{remark}[theorem]{Remark}
\numberwithin{equation}{section}
\newcommand\blfootnote[1]{%
  \begingroup
  \renewcommand\thefootnote{}\footnote{#1}%
  \addtocounter{footnote}{-1}%
  \endgroup
}
\newcommand{\N}{{\mathbb N}} 
\newcommand{\R}{{\mathbb R}}
\newcommand{\Rn}{{\mathbb R}^n}
\newcommand{\RN}{{\mathbb R}^{n+1}}
\newcommand{\s}{\mathbb{S}}
\newcommand{\sn}{\mathbb{S}^{n-1}}
\newcommand{\sN}{\mathbb{S}^{n}}
\newcommand{\Bn}{B^n}
\newcommand{\K}{{\mathcal K}}
\newcommand{\Kn}{{\mathcal K}^n}
\newcommand{\KN}{{\mathcal K}^{n+1}}
\newcommand{\KNH}{{\mathcal K}_{\langle H\rangle}^{n+1}}
\newcommand{\hm}{\mathcal H}
\newcommand{\fconvs}{{\mathrm{Conv}_{\mathrm{sc}}(\R^n)}} 
\newcommand{\fconvf}{{\mathrm{Conv}(\R^n; \R)}}
\newcommand{\fconvcd}{{\mathrm{Conv}_{{\mathrm cd}}(\R^n)}} 
\newcommand{\fconvz}{{\mathrm{Conv}_{{\leq 0}}(\R^n)}} 
\newcommand{\Cr}{C_\mathrm{rec}(\Rn)}
\newcommand{\proj}{\operatorname{proj}}
\newcommand{\gnom}{\operatorname{gno}} 
\newcommand{\refl}{\operatorname{ref}} 
\newcommand{\infconv}{\mathbin{\Box}} 
\newcommand{\sq}{\mathbin{\vcenter{\hbox{\rule{.3ex}{.3ex}}}}} 
\newcommand{\MA}{\mathrm{MA}} 
\newcommand{\MAp}{{\mathrm{MA}^{\!*}}} 
\newcommand{\Sa}{\Lambda^{\!*}} 
\newcommand{\Sap}{\Lambda} 
\DeclareMathOperator{\oZ}{\operatorname{Z}}
\newcommand{\oVb}{\overline{\operatorname{V}}} 
\newcommand{\oZZb}[2]{\overline{\operatorname{V}}_{#1,#2}} 
\newcommand{\Hess}{{\operatorname{D}}^2}
\newcommand{\Grass}[2]{\operatorname{G}(#2,#1)}
\renewcommand{\d}{\,\mathrm{d}}
\newcommand{\ind}{{\mathbf{I}}}
\newcommand{\interior}{\operatorname{int}} 
\newcommand{\bd}{\operatorname{bd}} 
\newcommand{\dom}{\operatorname{dom}} 
\newcommand{\epi}{\operatorname{epi}} 
\newcommand{\supp}{\operatorname{supp}} 
\title{Inequalities and Counterexamples for\\Functional Intrinsic Volumes and Beyond}
\author{Fabian Mussnig and Jacopo Ulivelli}
\date{}
\begin{document}
\maketitle

\begin{abstract}
We show that analytic analogs of Brunn--Minkowski-type inequalities fail for functional intrinsic volumes on convex functions. This is demonstrated both through counterexamples and by connecting the problem to results of Colesanti, Hug, and Saor\'in G\'omez. By restricting to a smaller set of admissible functions, we then introduce a family of variational functionals and establish Wulff-type inequalities for these quantities. In addition, we derive inequalities for the corresponding family of mixed functionals, thereby generalizing an earlier Alexandrov--Fenchel-type inequality by Klartag and recovering a special case of a recent P\'olya--Szeg\H{o}-type inequality by Bianchi, Cianchi, and Gronchi.

\blfootnote{{\bf 2020 AMS subject classification:} 26D15 (26B25, 39B26, 47J20, 52A20, 52A40, 52A41)}
\blfootnote{{\bf Keywords:} Convex function, geometric inequality, Brunn--Minkowski inequality, Wulff shape, shadow system}
\end{abstract}

{
    \hypersetup{linkcolor=black}
    \small
    \tableofcontents
}

\goodbreak

\normalsize

\section{Introduction}
\subsection{Geometric Inequalities}
\label{se:geometric_ineq}
Among the cornerstones of convex geometry are geometric inequalities. The central and probably most elegant result in this area is the \textit{Brunn--Minkowski inequality}, which states that
\begin{equation}
\label{eq:bm}
V_n(K+L)^{\frac 1n} \geq V_n(K)^{\frac 1n} + V_n(L)^{\frac 1n},
\end{equation}
for all convex bodies $K$ and $L$, i.e.\ non-empty, compact, convex subsets of $\Rn$, the set of which we denote by $\Kn$. Here, $V_n$ is the usual \textit{volume} or \textit{Lebesgue measure}, and
\[
K+L=\{x+y: x\in K, y\in L\}
\]
is the \textit{Minkowski sum} of $K$ and $L$. Equality holds in \eqref{eq:bm} if and only if $K$ and $L$ are homothetic or lie in parallel hyperplanes. An excellent exposition of the importance of this fundamental result, as well as an overview of its many significant applications, can be found in \cite{Gardner_BM}. Generalizing \eqref{eq:bm}, we have
\begin{equation}
\label{eq:bm_general}
V_j((1-\lambda) K + \lambda L)^{\frac 1j} \geq (1-\lambda) V_j(K)^{\frac 1j} + \lambda V_j(L)^{\frac 1i}
\end{equation}
for every $K,L\in\Kn$, $0\leq \lambda \leq 1$, and $0\leq j \leq n$. Here, $V_j\colon\Kn\to [0,\infty)$, $0\leq j\leq n$, is the $j$th intrinsic volume, which arises from the \textit{Steiner formula}
\begin{equation}
\label{eq:steiner}
V_n(K+r\,B^n)=\sum_{j=0}^n r^{n-j} \kappa_{n-j} V_j(K)
\end{equation}
for $K\in\Kn$ and $r>0$, where for $k\in\N$ we write $B^k$ for the Euclidean ball in $\R^k$ and denote by $\kappa_{k}$ its $k$-dimensional volume (with the convention $\kappa_0=1$).

\medskip

One of the many far-reaching consequences of the Brunn--Minkowski inequality is the classical Euclidean \textit{isoperimetric inequality}, which states that Euclidean balls minimize surface area among all convex bodies with given volume. More generally,
\begin{equation}
\label{eq:iso}
\frac{V_1(K)}{V_1(B^n)} \geq \left(\frac{V_2(K)}{V_2(B^n)} \right)^{\frac 12} \geq \cdots \geq \left(\frac{V_{n-1}(K)}{V_{n-1}(B^n)} \right)^{\frac{1}{n-1}} \geq \left(\frac{V_n(K)}{V_n(B^n)} \right)^{\frac 1n}
\end{equation}
for every $K\in\Kn$, where the final inequality is the isoperimetric inequality.

\medskip

Even more powerful inequalities are possible once we consider mixed functionals. The \textit{mixed volume} $V\colon (\Kn)^n\to [0,\infty)$ is the unique symmetric functional such that
\begin{equation}
\label{eq:mixed_vol}
V_n(\lambda_1 K_1 +\cdots + \lambda_m K_m)=\sum_{i_1,\ldots,i_n=1}^m \lambda_{i_1}\cdots\lambda_{i_n} V(K_{i_1},\ldots,K_{i_n})
\end{equation}
for every $m\in\N$, $K_1,\ldots,K_m\in\Kn$, and $\lambda_1,\ldots,\lambda_m\geq 0$. The key result for these objects is the \textit{Alexandrov--Fenchel inequality}, which states that
\begin{equation}
\label{eq:AF}
V(K_1,\ldots,K_n)^2 \geq V(K_1,K_1,K_3,\ldots,K_n) V(K_2,K_2,K_3,\ldots,K_n)
\end{equation}
for $K_1,\ldots,K_n\in\Kn$. While equality cases are known for special cases of $K_1,\ldots,K_n$ (we mention the recent breakthrough for polytopes \cite{shenfeld_van_handel_acta} and its extension to more general polyoids \cite{Hug_Reichert_AF}), the general case is still open. Furthermore, we remark that more recent achievements from the theory of valuations allow for even stronger inequalities than those mentioned so far \cite{Bernig_Kotrbaty_Wannerer,Kotrbaty_Wannerer_GAFA}.

\subsection{Functional Intrinsic Volumes}
It is a well-studied fact that many geometric inequalities have analytic counterparts. A classical example is the correspondence between the Euclidean isoperimetric inequality and the Sobolev inequality. We refer to \cite[Chapter 9]{Artstein_Giannopoulos_Milman} and \cite[Section 10.15]{Schneider_CB} for overviews and mention \cite{Haddad_Ludwig_JDG} as a recent example. The focus of this article is on inequalities that concern functionals on various spaces of convex functions. Of particular interest are the recently introduced \textit{functional intrinsic volumes} \cite{Colesanti-Ludwig-Mussnig-5} on the space
\[
\fconvs = \left\{u\colon \Rn \to (-\infty,\infty] : u \text{ is l.s.c., convex, } u\not\equiv \infty\text{, } \lim\nolimits_{|x|\to\infty} \tfrac{u(x)}{|x|}=\infty\right\}
\]
of proper, lower semicontinuous, super-coercive, convex functions. Note that a function $u$ on $\Rn$ is an element of $\fconvs$ if and only if its \textit{convex conjugate} $u^*$ is an element of
\[
\fconvf = \{v\colon \Rn\to \R : v \text{ is convex}\},
\]
and it is, therefore, often beneficial to consider dual results on both of these spaces. To define functional analogs of the intrinsic volumes on $\fconvs$, consider a density function $\alpha\in C_c([0,\infty))$, that is, $\alpha$ is continuous with compact support. We now set
\begin{equation}
\label{eq:ozzb_n_alpha}
\oZZb{n}{\alpha}(u)=\int_{\dom(u)} \alpha(|\nabla u(x)|)\d x
\end{equation}
for $u\in\fconvs$, where we write $\dom(u)=\{x\in\Rn : u(x)<\infty\}$ for the domain of $u$ and remark that convex functions are differentiable almost everywhere on the interior of their domains. Let us point out that in the dual setting of functions in $\fconvf$, the functional dual to \eqref{eq:ozzb_n_alpha}, given by $v\mapsto \oZZb{n}{\alpha}(v^*)$, is an integral with respect to the \textit{Monge--Amp\`ere measure} of $v$ (see Section~\ref{se:convex_functions} for details; see also \eqref{eq:int_nabla_v_conj_MA}).

\medskip

An immediate consequence of \eqref{eq:ozzb_n_alpha} is
\begin{equation}
\label{eq:ozz_on_ind}
\oZZb{n}{\alpha}(\ind_K)=\alpha(0)V_n(K)
\end{equation}
for every $K\in\Kn$, where
\[
\ind_K(x)=\begin{cases}
0\quad &\text{if } x\in K,\\
\infty\quad &\text{if } x\not\in K,
\end{cases}
\]
is the \textit{(convex) indicator function} of $K$. Similar to the Steiner formula \eqref{eq:steiner} we now obtain a family of functionals $\oZZb{j}{\alpha}\colon\fconvs\to\R$ through the relation
\begin{equation}
\label{eq:ozzb_steiner}
\oZZb{n}{\alpha}(u\infconv \ind_{r\,B^n}) = \sum_{j=0}^n r^{n-j} \kappa_{n-j} \oZZb{j}{\alpha}(u)
\end{equation}
for $u\in\fconvs$ and $r>0$, where $\infconv$ denotes \textit{infimal convolution} of functions (we refer to Section~\ref{se:convex_functions} for a definition). Since $\ind_K+\ind_{r\,B^n} = \ind_{K+r\,B^n}$, this immediately implies that
\[
\oZZb{j}{\alpha}(\ind_K)=\alpha(0)V_j(K)
\]
for $K\in\Kn$ and $0\leq j\leq n$, which shows that intrinsic volumes of convex bodies can be retrieved from functional intrinsic volumes.

\medskip

The operators $\oZZb{j}{\alpha}$ share many properties with the classical intrinsic volumes, and a full characterization of the former as continuous valuations with additional invariance properties, similar to Hadwiger's characterization of the intrinsic volumes, was obtained by the first author together with Colesanti and Ludwig in \cite{Colesanti-Ludwig-Mussnig-5}. Further proofs of this result were given in \cite{Colesanti-Ludwig-Mussnig-8,Knoerr_singular}. For additional properties of functional intrinsic volumes, integral geometry, and the corresponding theory of valuations on convex functions, we refer to \cite{Colesanti-Ludwig-Mussnig-4,Colesanti-Ludwig-Mussnig-6,Colesanti-Ludwig-Mussnig-7,Knoerr_support,Knoerr_unitarily,Knoerr_unitarily_decomp,Knoerr_smooth,Knoerr_Ulivelli,Knoerr_Ulivelli_2,Hug_Mussnig_Ulivelli,Hug_Mussnig_Ulivelli_2}. The study of functional intrinsic volumes also attracted considerable interest due to its connections with \textit{zonal valuations} on convex bodies, which in turn are related to rotation covariant \textit{Minkowski valuations} \cite{schuster_wannerer_JEMS}. We refer to the very recent works \cite{Brauner_Hofstaetter_Ortega-Moreno,Knoerr_zonal}, where ideas from the analytic setting were used to successfully establish a full classification of zonal valuations on convex bodies.

\subsection{Counterexamples}
While many properties for functional intrinsic volumes have been proved, which show that they behave similarly to their classical counterparts, the question of elementary inequalities for the operators $\oZZb{j}{\alpha}$ is up until now open. In particular, since the classical intrinsic volumes are retrieved from their functional versions, it would be natural to anticipate that analogs of the Brunn--Minkowski inequality \eqref{eq:bm} and the isoperimetric inequalities \eqref{eq:iso} also hold for functional intrinsic volumes. However, it turns out that this is not the case. In Section~\ref{se:bm_ineq} we show that apart from the case $j=1$, an inequality of the form
\[
\big(\oZZb{j}{\alpha}(u_1\infconv u_2)\big)^{\frac 1j} \geq \big(\oZZb{j}{\alpha}(u_1) \big)^{\frac 1j} + \big(\oZZb{j}{\alpha}(u_2)\big)^{\frac 1j}
\]
with $u_1,u_2\in\fconvs$, $2\leq j\leq n$, and non-negative $\alpha\in C_c([0,\infty))$, does not hold in general. While Proposition~\ref{prop:BM_conv} presents a counterexample, we will also give a deeper explanation of this phenomenon by connecting this question to a result for Brunn--Minkowski-type inequalities on convex bodies by Colesanti, Hug, and Saor\'in G\'omez \cite{Colesanti-Hug-Saorin_Gomez_JGA}.

\medskip

The results above suggest that it may be overly optimistic to expect classical geometric inequalities to transfer directly to the functional setting. With regard to the isoperimetric inequality, however, we show in Proposition~\ref{prop:iso_counter} that an isoperimetric principle for functional intrinsic volumes does not even hold in the form of an extreme value problem.

\subsection{Wulff-type Inequalties}
Having dealt with negative results in Section~\ref{se:counterexample}, we establish new inequalities in Section~\ref{se:wulff_type} that concern more general functionals from the family of \textit{Hessian valuations} \cite{Colesanti-Ludwig-Mussnig-3}, which include functional intrinsic volumes as a special case. For this, we restrict to convex functions from the smaller space
\[
\fconvz = \{u\in\fconvs : \dom(u) \text{ is compact, } \max\nolimits_{x\in\dom(u)} u(x)\leq 0\},
\]
which is dual to the space
\begin{equation}
\label{eq:fonvz_conj}
\fconvz^*=\{v\in\fconvf : 0 \leq v-h_K \leq C \text{ for some } K\in\Kn \text{ and } C\geq 0\}
\end{equation}
by convex conjugation, where $h_K(x)=\max_{y\in K} \langle x,y\rangle$ is the \textit{support function} of $K\in\Kn$. For such a function $u\in\fconvz$ we now consider the functional $\oVb_{n+1}\colon \fconvz\to\R$, given by
\begin{equation}
\label{eq:def_ovb_n+1}
\oVb_{n+1}(u)=\int_{\dom(u)}|u(x)|\d x
\end{equation}
for $u\in\fconvz$.

Next, let $\rho_\varphi\colon \Rn \to [-\infty,\infty]$ denote the \textit{recession function} of $\varphi\in C(\Rn)$, which, presuming that the limit exists, is defined by
\begin{equation}
\label{eq:rho_varphi}
\rho_\varphi(z)=\lim\nolimits_{t\to\infty} \frac{\varphi(t z)}{t}
\end{equation}
for $z\in\sn$ and which we extend (positively) $1$-homogeneously to $\Rn$. In addition, we set
\begin{equation}
\label{eq:def_Cr}
\Cr=\{\varphi \in C(\Rn): \rho_\varphi \text{ is continuous and } |\rho_\varphi-\varphi| \text{ is bounded on } \Rn\}.
\end{equation}
Here, the condition of boundedness serves to ensure that the convex conjugate $\varphi^*$ has compact domain (cf.\ \cite[Remark 3.6]{Ulivelli_Wulff}). Note that $\fconvz^*\subset \Cr$. For $\varphi\in\Cr$ we define $\oZ_\varphi\colon \fconvz\to\R$ as
\begin{equation}
\label{eq:oz_varphi_intro}
\oZ_\varphi(u)=\int_{\dom(u)} \varphi(\nabla u(x))\d x + \int_{\bd(\dom(u))} |u(x)| \rho_{\varphi}(\nu_{\dom(u)}(x))\d\hm^{n-1}(x)
\end{equation}
whenever $u\in\fconvz$ has full-dimensional domain. Here, $\bd(\dom(u))$ denotes the \textit{boundary} of $\dom(u)$ and $\hm^{n-1}$ is the $(n-1)$-dimensional \textit{Hausdorff measure}. In addition, $\nu_{\dom(u)}(x)\in\sn$ is the outer unit normal to $\dom(u)$ at $x\in \bd(\dom(u))$, which is well-defined $\hm^{n-1}$-a.e.\ on $\bd(\dom(u))$. Note that our conditions on $\varphi$, together with the fact that elements of $\fconvz$ have compact domain, guarantee that \eqref{eq:oz_varphi_intro} is finite. For a precise definition of $\oZ_\varphi$, using appropriate measures, we refer to Section~\ref{suse:bodies_fcts}.

\medskip

Let us point out that $\oZ_\varphi$ generalizes $\oZZb{n}{\alpha}$ on $\fconvz$. Indeed, for $\varphi(x)=\alpha(|x|)$, $x\in\Rn$, with some $\alpha\in C_c([0,\infty))$, we have $\rho_{\varphi}\equiv0$ and therefore $\oZ_{\varphi}=\oZZb{n}{\alpha}$. In addition, we will show in Proposition~\ref{prop:varphi_u_conj} that \eqref{eq:oz_varphi_intro} also generalizes the functional $\oVb_{n+1}$ through the relation
\[
\frac{1}{n+1}\oZ_{u^*}(u)=\oVb_{n+1}(u)
\]
for $u\in\fconvz$. In fact, the true nature of $\oZ_\varphi$ is that of a surface area integral, and under mild assumptions on $u\in\fconvz$ and $\varphi\in\Cr$ we will prove the variational formula
\[
\oZ_{\varphi}(u)=
\frac{\d}{\d t}\bigg\vert_{t=0^+} \oVb_{n+1}((u^*+t\varphi)^*)
\]
in Proposition~\ref{prop:variational_formula}. See also \cite{Huang_Liu_Xi_Zhao_dual_LC,Kryvonos_Langharst,Ulivelli_Wulff} for related results.

\medskip

Our main result of Section~\ref{se:wulff_type} is the following inequality, where we use the notation $\{u\leq t\}=\{x\in\Rn : u(x)\leq t\}$ for the \textit{level set} of a convex function $u$ at height $t\in\R$.
\begin{theorem}
\label{thm:wulff_ineq_fconvz}
If $\varphi\in\Cr$ is such that  
\begin{equation*}
c\leq \frac{\varphi(x)}{\sqrt{1+|x|^2}}
\end{equation*}
for every $x\in\Rn$ and some $c>0$, then

\begin{equation}
\label{eq:wulff_fct}
\left(\frac{\oZ_\varphi(u)}{\oZ_\varphi(u_\varphi)}\right)^{\frac{1}{n}}\geq \left(\frac{\oVb_{n+1}(u)}{\oVb_{n+1}(u_\varphi)}\right)^{\frac{1}{n+1}}
\end{equation}
for every $u\in\fconvz$, where $u_\varphi=\varphi^*+\ind_{\{\varphi^*\leq 0\}}$. If $\oVb_{n+1}(u)>0$, then equality holds in \eqref{eq:wulff_fct} if and only if $u(x)=\lambda\, u_\varphi(\frac{x-x_o}{\lambda})$, $x\in\Rn$, for some $\lambda>0$ and $x_o\in\Rn$. In addition, if $\rho_\varphi \leq \varphi$ on $\Rn$, then $u_\varphi=\varphi^*$.
\end{theorem}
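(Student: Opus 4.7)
My plan is to transport the classical proof of the Wulff inequality to $\R^{n+1}$ via the natural identification of functions in $\fconvz$ with the convex body they trace out below their graph. For $u\in\fconvz$, set
$$
\widehat u=\{(x,s)\in\R^{n+1}:x\in\dom(u),\,u(x)\leq s\leq 0\},
$$
which is a compact convex body with $V_{n+1}(\widehat u)=\oVb_{n+1}(u)$. The first step I would carry out is the Minkowski-sum identity
$$
\widehat{f\,\infconv\, g}=\widehat f+\widehat g\qquad\text{for every }f,g\in\fconvz.
$$
One inclusion is immediate from the definition of infimal convolution; the other amounts to splitting any target height $s\in[(f\,\infconv\, g)(x),0]$ as $s_1+s_2$ with $f(x_i)\leq s_i\leq 0$, an elementary interval argument that works precisely because $f,g\leq 0$ on their domains and the infimum in $\infconv$ is attained for $f,g\in\fconvz$.

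Combining this with the scaling $(t\varphi)^*(y)=t\,\varphi^*(y/t)$ and the duality $(u^*+t\varphi)^*=u\,\infconv\,(t\varphi)^*$ gives $\widehat{(u^*+t\varphi)^*}=\widehat u+t\,\widehat{u_\varphi}$ for $t\geq 0$, after first reducing to the case where $\varphi$ is convex and $\rho_\varphi\leq\varphi$ (so that $u_\varphi=\varphi^*\in\fconvz$, which is the last assertion of the theorem, and $(t\varphi)^*=t\,u_\varphi(\cdot/t)$ has $\widehat{(t\varphi)^*}=t\,\widehat{u_\varphi}$). The general case should then follow by approximation, using that $\oZ_\varphi(u)$ and $\oVb_{n+1}(u_\varphi)$ depend on $\varphi$ in a way compatible with the Monge--Amp\`ere-based representation \eqref{eq:int_nabla_v_conj_MA}, together with continuity in $\varphi$. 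The classical Brunn--Minkowski inequality in $\R^{n+1}$ then yields
$$
\oVb_{n+1}\!\big((u^*+t\varphi)^*\big)^{\frac{1}{n+1}}\geq\oVb_{n+1}(u)^{\frac{1}{n+1}}+t\,\oVb_{n+1}(u_\varphi)^{\frac{1}{n+1}},\qquad t\geq 0.
$$

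Differentiating this inequality at $t=0^+$ and invoking Proposition~\ref{prop:variational_formula} produces
$$
\oZ_\varphi(u)\geq(n+1)\,\oVb_{n+1}(u)^{\frac{n}{n+1}}\,\oVb_{n+1}(u_\varphi)^{\frac{1}{n+1}}.
$$
Specialising to $u=u_\varphi$, where $\widehat{u_\varphi}+t\widehat{u_\varphi}=(1+t)\widehat{u_\varphi}$ makes the preceding Brunn--Minkowski inequality an equality for every $t$, forces $\oZ_\varphi(u_\varphi)=(n+1)\,\oVb_{n+1}(u_\varphi)$, which is also consistent with Proposition~\ref{prop:varphi_u_conj}. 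Dividing through and raising to the $1/n$-th power yields \eqref{eq:wulff_fct}. For the equality case (with $\oVb_{n+1}(u)>0$), the classical Brunn--Minkowski equality clause forces $\widehat u$ to be a homothet of $\widehat{u_\varphi}$; since both bodies have their top faces on the hyperplane $\{s=0\}$, the vertical translation must vanish, and the remaining planar homothety translates back to $u(x)=\lambda\,u_\varphi((x-x_o)/\lambda)$, as claimed. I expect the main obstacle to be the Minkowski-sum step together with the reduction from a generic $\varphi\in\Cr$ to a convex representative with $\rho_\varphi\leq\varphi$; once that reduction is justified, the remainder of the argument is essentially classical Brunn--Minkowski.
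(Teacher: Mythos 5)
Your overall strategy --- lifting $u$ to a convex body in $\R^{n+1}$ and running a Brunn--Minkowski/first-variation argument --- is the same geometric idea as the paper's, which applies Wulff's theorem (Theorem~\ref{thm:wulff}) to the symmetric body $K^u$ via Lemma~\ref{le:varphi_tilde}; you instead work with the ``half-body'' $\widehat u$ and essentially re-derive Wulff's inequality from Brunn--Minkowski. Most of your steps are sound: the Minkowski-sum identity is Lemma~\ref{le:infconv_ku} in disguise, the differentiation step is legitimately Proposition~\ref{prop:variational_formula} (whose proof does not rely on the theorem, so there is no circularity, though you must treat $\oVb_{n+1}(u)=0$ separately since that proposition assumes positivity), and the passage from equality in the differentiated inequality back to homothety of $\widehat u$ and $\widehat{u_\varphi}$ is the standard tangent-line/concavity argument for Minkowski's first inequality.

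There is, however, one genuine gap: the normalization $\oZ_\varphi(u_\varphi)=(n+1)\,\oVb_{n+1}(u_\varphi)$ for a \emph{general} admissible $\varphi\in\Cr$. Your justification (specializing $u=u_\varphi$ and noting that Brunn--Minkowski becomes an equality for all $t$) requires $\widehat{(u_\varphi^*+t\varphi)^*}=(1+t)\widehat{u_\varphi}$, which forces $\varphi=u_\varphi^*$; this holds only when $\varphi$ is convex with $\rho_\varphi\le\varphi$, i.e.\ only for the distinguished representative $\psi:=u_\varphi^*=(\varphi^*+\ind_{\{\varphi^*\le0\}})^*$. For other $\varphi$ with the same $u_\varphi$ one has $\psi\le\varphi$ and $\rho_\psi\le\rho_\varphi$, hence only $\oZ_\varphi(u_\varphi)\ge\oZ_\psi(u_\varphi)=(n+1)\oVb_{n+1}(u_\varphi)$ --- and if this inequality were strict, the normalized form \eqref{eq:wulff_fct} would \emph{not} follow from your unnormalized bound $\oZ_\varphi(u)\ge(n+1)\oVb_{n+1}(u)^{n/(n+1)}\oVb_{n+1}(u_\varphi)^{1/(n+1)}$, since $\oZ_\varphi(u_\varphi)$ sits in the denominator. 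What is missing is exactly the functional analog of \eqref{eq:volume_wulff_shape} (Schneider, Lemma~7.5.1): the measures $\MAp(u_\varphi;\cdot)$ and $\Sa(u_\varphi;\cdot)$ are concentrated on the set where $\varphi$ coincides with $u_\varphi^*$ (respectively where $\rho_\varphi$ coincides with $h_{\dom(u_\varphi)}$), because the surface area measure of a Wulff shape only charges the ``active'' normal directions. This is a true but non-obvious lemma that your proposal neither states nor proves; relatedly, your reduction of the general case to convex $\varphi$ with $\rho_\varphi\le\varphi$ ``by approximation'' should be replaced by the monotonicity argument $\oZ_\psi\le\oZ_\varphi$ just described, since $\varphi\mapsto u_\varphi$ is far from injective and $\oZ_\varphi(u)$ is not determined by $u_\varphi$ alone. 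Once the concentration lemma is supplied, both the inequality and its equality case go through as you describe.
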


Our approach to proving \eqref{eq:wulff_fct}, which in part we also use in Section~\ref{se:counterexample}, is to associate with each convex function $u\in\fconvz$ a convex body in $\KN$ and to derive inequalities for $u$ from known results for convex bodies. In the case of Theorem~\ref{thm:wulff_ineq_fconvz}, the underlying inequality is \textit{Wullf's theorem} (see Theorem~\ref{thm:wulff} below). This method is, of course, not new. We refer to \cite{Artstein_Klartag_Milman_marginals,Artstein_Klartag_Schuett_Werner,Caglar_et_al,Caglar_Werner,Klartag_05,Klartag_07} for some skillful and insightful demonstrations of this strategy and also mention \cite{Colesanti-Ludwig-Mussnig-8,Hug_Mussnig_Ulivelli_2,Knoerr_support,Knoerr_zonal,Knoerr_Ulivelli}, where this technique was used in the context of valuations.

\medskip

Let us furthermore point out that Theorem~\ref{thm:wulff_ineq_fconvz} can be easily formulated in terms of the conjugate functions $u^*\in\fconvz^*$. Notably, assuming that all occurring expressions are integrable, we have
\begin{equation}
\label{eq:int_nabla_v_conj_MA}
v\mapsto \int_{\dom(v^*)} \varphi(\nabla v^*(x)) \d x = \int_{\Rn} \varphi( x) \d\MA(v;x)
\end{equation}
for $v\in\fconvf$ and $\varphi\colon \Rn\to\R$, where $\MA(v;\cdot)$ denotes the Monge--Amp\`ere measure associated with $v$. Under additional $C^2$ assumptions on $v$, this measure is absolutely continuous with respect to the Lebesgue measure with
\begin{equation}
\label{eq:MA_Hess}
\d\MA(v;x)=\det(\Hess v(x)) \d x,
\end{equation}
where we write $\det(\Hess v(x))$ for the determinant of the \textit{Hessian matrix} of $v$ at $x\in\Rn$. If we furthermore define $\Sap(v;\cdot)$ as the the push-forward of $\nu_{\dom(v^*)}$ under $|v^*(x)|\d\hm^{n-1}(x)$ restricted to $\bd(\dom(v^*))$ (see Section~\ref{suse:bodies_fcts} for precise definitions), then for the operator $v\mapsto \oZ_{\varphi}^*(v)=\oZ_{\varphi}(v^*)$, $\varphi\in\Cr$, we have
\[
\oZ_{\varphi}^*(v)=\int_{\Rn}\varphi(x)\d\MA(v;x) +\int_{\sn} \rho_{\varphi}(z)\d\Sap(v;z)
\]
for $v\in\fconvz$. Similarly, for $\oVb_{n+1}^*(v)=\oVb_{n+1}(v^*)$, Proposition~\ref{prop:varphi_u_conj} shows that
\[
\oVb_{n+1}^*(v)=\frac{1}{n+1}\oZ_{v}^*(v)
\]
for $v\in\fconvz^*$. Inequality \eqref{eq:wulff_fct} can now be rewitten as
\[
\left(\frac{\oZ_{\varphi}^*(v)}{\oZ_{\varphi}^*(v_{\varphi})} \right)^{\frac 1n} \geq \left( \frac{\oZ_{v}^*(v)}{\oZ_{v_{\varphi}}(v_\varphi) } \right)^{\frac{1}{n+1}},
\]
for $v\in\fconvz^*$, where $v_{\varphi}=\varphi^{**}$ in case $\rho_{\varphi}\leq\varphi$. Morever, if $\oZ_{v}^*(v)>0$, then equality holds if and only if $v(x)=\lambda\, v_{\varphi}(x)+\langle x_o,x\rangle$, $x\in\Rn$, for some $\lambda>0$ and $x_o\in\Rn$.

\subsection{Mixed Functionals and Shadow Systems}
In Section~\ref{se:mixed} we generalize $\oVb_{n+1}$ and look at the mixed functional $\oVb$, which arises from 
\[
\oVb_{n+1}\big((\lambda_1 \sq u_1) \infconv \cdots \infconv (\lambda_m \sq u_m)\big) = \sum_{i_0,\ldots,i_n=1}^m \lambda_{i_0}\cdots \lambda_{i_n} \oVb(u_{i_0},\ldots,u_{i_n})
\]
for $m\in\N$, $u_1,\ldots,u_m\in\fconvz$, and $\lambda_1,\ldots,\lambda_m\geq 0$. Here, $\lambda \sq u$ denotes \textit{epi-mul\-ti\-pli\-ca\-tion} of $u\in\fconvs$ with $\lambda\geq 0$, which is the canonical choice of multiplication with a scalar to be considered together with infimal convolution (see Section~\ref{se:convex_functions}).
Since $\oVb(u_0,\ldots,u_n)$ is, in fact, the mixed volume of some convex bodies associated with $u_0,\ldots,u_n$ (see \eqref{eq:def_ovb}), we immediately retrieve properties of $\oVb$ from known properties for mixed volumes. In particular, this results in an Alexandrov--Fenchel-type inequality on $\fconvz$. In order to give meaning to such results, we establish a representation of $\oVb$ in terms of mixed measures of convex functions in Proposition~\ref{prop:rep_ovb}. Our approach is essentially the same used by Klartag in \cite[Theorem 1.3]{Klartag_07}, who considered an equivalent setting of concave functions, although with additional regularity assumptions (see Remark~\ref{re:klartag_af} for details). As a special case of Corollary~\ref{cor:af_klartag}, we retrieve Klartag's inequality
\begin{align*}
\bigg(\int_{\Rn} u_0^*(x)\, D(\Hess u_1^*(x),&\ldots,\Hess u_n^*(x)) \d x\bigg)^2\\
&\geq \int_{\Rn} u_0^*(x)\,D(\Hess u_1^*(x),\Hess u_1^*(x),\Hess u_3^*(x),\ldots,\Hess u_n^*(x))\d x\\
&\quad \times \int_{\Rn} u_0^*(x)\,D(\Hess u_2^*(x),\Hess u_2^*(x),\Hess u_3^*(x),\ldots,\Hess u_n^*(x))\d x
\end{align*}
for sufficiently regular $u_0,\ldots,u_n\in\fconvz$, where $D$ denotes the \textit{mixed discriminant}, which takes $n$ symmetric $n\times n$ matrices as arguments. Let us furthermore remark that different functional analogs of mixed volumes on convex functions were previously considered, for example, in \cite{Milman_Rotem_JFA}.

\medskip

In Section~\ref{se:shadow_sytems}, we approach \textit{shadow systems} of convex functions to retrieve further inequalities for the functional $\oVb$. In particular, we obtain results for the \textit{Steiner symmetrals} $s_z u$ of a convex function $u$ in direction $z\in\sn$ and for the \textit{symmetric rearrangement} of $\bar{u}$ (we refer to Section~\ref{se:shadow_sytems} for detailed definitions). Our main result of this section is the following theorem, where we use the function space
\[
\fconvcd=\{u \in \fconvs: \dom(u) \text{ is compact} \},
\]
and where we write $\bd(K)$ for the boundary of a convex body $K$.

\begin{theorem}
\label{thm:int_w_nabla_u}
Let $w\in \fconvs$ and let $u\in\fconvcd$ be such that $u\vert_{\bd(\dom(u))} = c$ for some $c\in\R$. If
\begin{equation}
\label{eq:cond_int_w_nabla_u}
\int_{\dom(u)} w^*(\nabla u(x)) \d x < \infty,
\end{equation}
then
\begin{equation}
\label{eq:int_ineq_sz_bar}
\int_{\dom(u)} w^*(\nabla u(x)) \d x \geq \int_{\dom(s_z(u))} (s_z w)^*(\nabla (s_z u)(x))\d x \geq \int_{\dom(\bar{u})} \bar{w}^*(\nabla \bar{u}(x)) \d x
\end{equation}
for every $z\in\sn$.
\end{theorem}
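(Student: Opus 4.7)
The plan is to realize Steiner symmetrization in direction $z\in\sn$ as the symmetric point of a one-parameter shadow system of convex functions connecting $u$ (respectively $w$) to its reflection across $z^\perp$, prove that the functional $\lambda\mapsto\int(w_\lambda)^*(\nabla u_\lambda(x))\d x$ is convex in the shadow parameter $\lambda$, and combine convexity with reflection invariance at the endpoints $\lambda=\pm 1$ to obtain the first inequality in \eqref{eq:int_ineq_sz_bar}. The second inequality then follows by iterating along a sequence of directions whose symmetrizations converge to the symmetric rearrangement, and passing to the limit.

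\textbf{Shadow systems.} First I would introduce one-parameter families $\{u_\lambda\}_{\lambda\in[-1,1]}$ with $u_{-1}=u\circ\refl_{z^{\perp}}$, $u_1=u$, $u_0=s_z u$, and $(x,\lambda)\mapsto u_\lambda(x)$ jointly convex; analogously for $w$. A natural epigraphical construction takes the convex hull in $\R^{n+1}\times\R$ of the epigraphs of $u$ and $u\circ\refl_{z^{\perp}}$ (assigning $\lambda=\pm 1$ respectively), and slices at fixed $\lambda$. The slice at $\lambda=0$ is the epigraph of $s_z u$, reproducing the level-set definition of the Steiner symmetral for convex functions, and the joint convexity of $(x,\lambda)\mapsto u_\lambda(x)$ is precisely what puts the family into the shadow-system framework.

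\textbf{Convexity argument.} The crux is to show that $F(\lambda):=\int_{\dom(u_\lambda)}(w_\lambda)^*(\nabla u_\lambda(x))\d x$ is convex on $[-1,1]$. Using the geometric identifications developed in Section~\ref{se:mixed}, and in particular the mixed-measure representation of $\oVb$ in Proposition~\ref{prop:rep_ovb}, I would rewrite $F(\lambda)$ as a mixed volume in $\R^{n+1}$ of convex bodies built from $u_\lambda$ and $w_\lambda$. The classical Shephard--Campi--Gronchi convexity of mixed volumes along shadow systems of convex bodies then delivers the convexity of $F$. A direct change of variables shows that $F(-1)=F(1)$, since $\refl_{z^{\perp}}$ preserves Lebesgue measure and commutes with conjugation; convexity therefore forces $F(0)\leq F(\pm 1)$, which is the first inequality in \eqref{eq:int_ineq_sz_bar}. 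The second inequality follows by choosing a sequence of directions $z_k\in\sn$ along which iterated Steiner symmetrizations of $u$ and $w$ converge, say in epi-convergence, to $\bar u$ and $\bar w$, and passing to the limit in the monotone non-increasing sequence of integrals using lower semicontinuity.

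\textbf{Main obstacle.} The hardest step is the convexity of $F$: producing an explicit mixed-volume representation and verifying that $u_\lambda$ and $w_\lambda$ fit jointly into a single shadow system of convex bodies to which the classical convexity theorem applies. A secondary technical point is the preservation of the integrability hypothesis \eqref{eq:cond_int_w_nabla_u} along the deformation and the symmetrizing sequence, since $\nabla u_\lambda$ may blow up near $\bd(\dom(u_\lambda))$. The assumption $u|_{\bd(\dom(u))}=c$ is what makes this controllable, as it realises the ``body under the graph'' $\{(x,t):x\in\dom(u),\ u(x)\leq t\leq c\}$ as a genuine convex body in $\R^{n+1}$, permitting direct transfer of geometric shadow-system and symmetrization results to the functional side.
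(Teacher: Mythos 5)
Your overall strategy coincides with the paper's: realize Steiner symmetrization as the midpoint of a shadow system, obtain convexity of the integral functional from Shephard's theorem (Theorem~\ref{thm:shadow_sys_v_convex}) via a mixed-volume representation, exploit the endpoint symmetry, and iterate along a Klain sequence of directions (Proposition~\ref{prop:steiner_sequence}) before passing to the limit. Two steps, however, do not work as written.

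First, your construction of the shadow system is incorrect. Taking the convex hull of $\epi(u)$ at $\lambda=1$ and $\epi(u\circ\refl_{z^\perp})$ at $\lambda=-1$ and slicing at $\lambda=0$ yields the function $\bigl(\tfrac12\sq u\bigr)\infconv\bigl(\tfrac12\sq(u\circ\refl_{z^\perp})\bigr)$, whose sublevel sets are the Minkowski averages $\tfrac12\bigl(\{u\le s\}+\refl_{z^\perp}\{u\le s\}\bigr)$. These contain $S_z\{u\le s\}$ but are in general strictly larger (already for a triangle not symmetric about $z^\perp$; by Brunn--Minkowski the average has strictly larger volume unless the level set is already symmetric), so the midpoint of your family is not $s_zu$. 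The correct family is the Rogers--Shephard parallel chord movement applied to the level sets, equivalently to the body $K^u$ as in \eqref{eq:K_z_t}: this is a shadow system with $S_zK$ at the symmetric parameter, but it is not the linear interpolation between $\epi(u)$ and its reflection. With that replacement, convexity in the shadow parameter together with the endpoint identity $V(\refl_{z^\perp}K^{w},\refl_{z^\perp}K^{u}[n])=V(K^{w},K^{u}[n])$ gives the first inequality of \eqref{eq:int_ineq_sz_bar} exactly as you intend.

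Second, the mixed-volume representation you rely on requires $K^{w}$ to be a genuine convex body, i.e.\ $\dom(w)$ compact, whereas the theorem only assumes $w\in\fconvs$. The paper therefore first proves the statement for $w\in\fconvcd$ (Proposition~\ref{prop:int_ineq_sz_bar}), writing $\int_{\dom(u)}w^*(\nabla u)\d x=\tfrac{n+1}{2}V(K^{w_o},K^{u}[n])-M\,V_n(\dom(u))$ after a vertical normalization, and then truncates a general $w$ by $w_j=w+\ind_{\{w\le j\}}$, using that $(s_zw)_j=s_zw_j$ and $(\bar w)_j=\overline{w_j}$ and passing to the limit by dominated convergence; this limit step is where hypothesis \eqref{eq:cond_int_w_nabla_u} is actually consumed. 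Your ``secondary technical point'' about $\nabla u_\lambda$ blowing up near the boundary is not the real obstruction: the hypothesis $u\vert_{\bd(\dom(u))}=c$ enters instead through Lemma~\ref{le:s_vanish}, killing the boundary measure $\Sa(u[n];\cdot)$ in the representation of Proposition~\ref{prop:rep_ovb} so that the integral is a pure mixed volume with no boundary term.
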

Let us point out that the outermost inequalities of \eqref{eq:int_ineq_sz_bar} can be seen as a generalization of the \textit{P\'olya--Szeg\H{o} principle} (for a restricted class of functions), which is an inequality of the form
\[
\int_{\Rn} \psi(|\nabla f(x)|)\d x \geq \int_{\Rn} \psi(|\nabla f^\star (x)|) \d x
\]
for suitable \textit{Sobolev functions} $f\colon \Rn\to\R$, where $f^\star$ denotes the \textit{decreasing rearrangement} of $f$, and where $\psi\colon [0,\infty)\to[0,\infty]$ is a \textit{Young function}. Indeed, it is straightforward to reformulate Theorem~\ref{thm:int_w_nabla_u} in terms of $w^*=\phi\colon \Rn\to\R$ convex, since $w\in\fconvs$ if and only if $w^*\in\fconvf$. If we choose $\phi$ such that $\phi(o)=0$, then instead of a convex function $u\in\fconvz$ with boundary conditions, we may consider a concave function $f\colon\Rn\to\R$ with compact support (via the relation $u=-f+\ind_{\supp f}$). The outermost inequalities of \eqref{eq:int_ineq_sz_bar} now read
\[
\int_{\Rn}\phi(\nabla f(x)) \d x \geq \int_{\Rn} \phi^\bullet(\nabla f^\star (x)) \d x,
\]
where $\phi^\bullet$ is obtained from $\phi$ by taking the convex conjugate, symmetrizing the level sets, and taking the conjugate again. A more general anisotropic version of such an inequality was recently obtained by Bianchi, Cianchi, and Gronchi in \cite{Bianchi_Cianchi_Gronchi}. See also \cite{Bianchi_Gardner_Gronchi_Kiderlen_JFA24} for another recent generalization of the P\'olya--Szeg\H{o} inequality.

\paragraph{In Memoriam.} The authors dedicate this article to the memory of Paolo Gronchi, who passed away on July 4\textsuperscript{th}, 2024. He was an outstanding person and mathematician. His example reached far beyond mathematics, and his kind and lighthearted spirit has outlived all adversities.

\section{Preliminaries}
\label{se:preliminaries}

We endow $n$-dimensional Euclidean space $\Rn$ with the Euclidean norm $|\cdot|$ and the usual scalar product $\langle\cdot,\cdot\rangle$. The origin will be denoted by $o\in\Rn$, and we write $\{e_1,\ldots,e_n\}$ for the standard orthonormal basis in $\Rn$.

\medskip

It will be expedient for us to sometimes work in the $(n+1)$-dimensional ambient space $\RN$ with unit sphere $\sN$. We decompose $\sN$ into three disjoint parts: the equator,
\[
\sN_0=\{\nu \in \sN : \langle \nu,e_{n+1}\rangle = 0\},
\]
which we will sometimes identify with the $(n-1)$-dimensional unit sphere $\sn$;
the (open) lower half-sphere of dimension $n$,
\[
\sN_{-}=\{\nu\in\sN : \langle \nu, e_{n+1}\rangle <0\};
\]
and the (open) upper half-sphere $\sN_{+}$, which is defined analogously to $\sN_{-}$. We furthermore need the \textit{gnomonic projection} $\gnom\colon \sN_{-} \to\Rn$,
\[
\gnom(\nu)= \frac{(\nu_1,\dots,\nu_n)}{|\nu_{n+1}|},
\]
where $\nu=(\nu_1,\ldots,\nu_{n+1})$ for $\nu\in\sN_{-}$. Moreover, let us point out that
\[
\sqrt{1+\gnom(\nu)^2} = \frac{1}{|\nu_{n+1}|}
\]
for $\nu\in\sN_{-}$.

\subsection{Convex Bodies}
\label{se:convex_bodies}
We recall some definitions and results from the geometry of convex bodies, where we refer to the books by Gruber \cite{Gruber_book} and Schneider \cite{Schneider_CB} as standard references.

\medskip

We have already introduced the set of convex bodies (which we consider together with the Hausdorff metric), the intrinsic volumes, and the mixed volume in Section~\ref{se:geometric_ineq}. Since we will mostly work with convex bodies in $\RN$, we use the corresponding set $\KN$ in the following.

\medskip

To each body $K\in\KN$ we associate its \textit{surface area measure} $S_n(K,\cdot)$ which is a Radon measure on $\sN$ such that
\[
S_n(K,\omega)=\hm^n(\{x\in \bd K : K \text{ has an outer unit normal at } x \text{ in } \omega\}),
\]
for $\omega\subseteq \sN$. Equivalently, this measure can be described as the push-forward of $\hm^n$, restricted to $\bd K$, under the \textit{Gauss map}. The surface area measure also naturally arises from the variational formula
\begin{equation}
\label{eq:first_variation_volume}
\frac{\d}{\d t} \bigg\vert_{t=0^+}  V_{n+1}(K + t L) = \int_{\sN} h_{L}(\nu) \d S_n(K,\nu). 
\end{equation}
which holds for $K,L\in\KN$.

\medskip

Similar to \eqref{eq:mixed_vol} we have the polynomial expansion
\begin{equation}
\label{eq:mixed_area_measure}
S_n(\lambda_1 K_1+\cdots+\lambda_m K_m,\cdot) = \sum_{i_1,\ldots,i_n=1}^m \lambda_{i_1}\cdots \lambda_{i_n} S(K_{i_1},\ldots,K_{i_n},\cdot)
\end{equation}
with $m\in\N$, $K_1,\ldots,K_m\in\KN$, and $\lambda_1,\ldots,\lambda_m\geq 0$, gives rise to the \textit{mixed area measure} $S$, which associates to each $n$-tuple of convex bodies a Radon measure on $\sN$. The mixed area measure is uniquely determined by \eqref{eq:mixed_area_measure} together with the condition that $S$ is symmetric in its entries. It furthermore satisfies the relation
\begin{equation}
\label{eq:mixed_vol_int_s}
V(K_1,\ldots,K_{n+1})=\frac{1}{n+1} \int_{\sN} h_{K_1}(\nu) \d S(K_2,\ldots,K_{n+1},\nu).
\end{equation}
for $K_1,\ldots,K_{n+1}\in\KN$.

\medskip

Finally, let us mention that the intrinsic volumes are continuous with respect to the \textit{Hausdorff metric}, and similarly, the mixed volume is continuous in each of its entries. The surface area measure admits weak continuity in the sense that the integration of a continuous function results in a continuous quantity. Similarly, the mixed area measure is weakly continuous in every entry.

\subsection{Convex Functions}
\label{se:convex_functions}
In this section, we collect some results on convex functions as well as operators defined on convex functions. In addition to the books mentioned in the previous subsection and unless stated otherwise, we consider the books by Rockafellar \cite{Rockafellar} as well as Rockafellar and Wets \cite{RockafellarWets} as standard literature for the following exposition.

\medskip

For a function $w\colon \Rn\to [-\infty,\infty]$ we write
\begin{equation}
\label{eq:def_conv_conj}
w^*(y)=\sup\nolimits_{x\in\Rn} \left(\langle x,y \rangle - \varphi(x)\right),\qquad y\in\Rn,
\end{equation}
for the \textit{Legendre--Fenchel transform} or convex conjugate of $w$, which is a lower semicontinuous, convex function on $\Rn$. It holds that $u\in\fconvs$ if and only if $u^*\in\fconvf$. In addition $u^{**}=u$ for every $u\in\fconvs$ and $v^{**}=v$ for every $v\in\fconvf$.

Among the standard properties of convex conjugation are its interplay with \textit{vertical translations} in the form of
\[
(w+c)^*=w^*-c
\]
for $c\in\R$, and \textit{horizontal translations}
\[
w_o^*=w^*+\langle x_o,\cdot\rangle,
\]
where $w_o(x)=w(x-x_o)$, $x\in\Rn$, for some $x_o\in\Rn$.

\medskip

We equip all occurring spaces of convex functions with the topology associated with \textit{epi-convergence}, which admits simple descriptions on the spaces considered in this article. On $\fconvf$, epi-convergence of functions is equivalent to pointwise convergence everywhere, as well as uniform convergence on compact sets. On $\fconvs$ and its subspaces, epi-\-con\-ver\-gence of a sequence $u_j$, $j\in\N$, to $u\in\fconvs$ is equivalent to the Hausdorff convergence of the level sets $\{u_j\leq s\}\to\{u\leq s\}$ for every $s\neq \min_{x\in\Rn} u(x)$, where for $s<\min_{x\in\Rn} u(x)$ this means that $\{u_j\leq s\}=\emptyset$ for every $j\geq j_o$ with some $j_o\in\N$ (cf. \cite[Lemma 5]{Colesanti-Ludwig-Mussnig-1} and \cite[Theorem 3.1]{Beer_Rockafellar_Wets}). In particular, convex conjugation is a continuous involution between $\fconvs$ and $\fconvf$.

\medskip

For a lower-semicontinuous convex function $u$ on $\Rn$ we denote by
\[
\epi(u) = \{(x,t)\in\Rn\times\R: u(x)\leq t\}
\]
the \textit{epi-graph} of $u$, which is a convex, closed subset of $\R^{n+1}$. The addition of epi-graphs is the standard addition on $\fconvs$. That is, for $u_1,u_2\in\fconvs$ we denote by $u_1\infconv u_2\in\fconvs$ their infimal convolution which is defined as
\[
(u_1\infconv u_2)(x)=\inf\nolimits_{x_1+x_2=x} u_1(x_1)+u_2(x_2)
\]
for $x\in\R^n$. Equivalently,
\[
\epi(u_1\infconv u_2)=\epi(u_1)+\epi(u_2),
\]
where Minkowski addition is used on the right side. Furthermore, we define epi-multiplication on $\fconvs$ as the corresponding multiplication with a scalar. That is, for $\lambda>0$ and $u\in\fconvs$, we set
\[
\lambda\sq u(x)=\lambda\, u\left( \frac x\lambda \right)
\]
for $x\in\R^n$ and remark that this continuously extends to $0\sq u=\ind_{\{o\}}$. Observe that
\[
\left(\lambda \sq \ind_K\right) \infconv \left(\mu \sq \ind_L\right) = \ind_{\lambda K + \mu L}
\]
for every $K,L\in\Kn$ and $\lambda,\mu\geq 0$.

The composition of epi-convergence with convex conjugation leads to pointwise addition, that is
\[
\big((\lambda_1 \sq u_1)\infconv(\lambda_2\sq  u_2)\big)^*=\lambda_1 u_1^* + \lambda_2 u_2^*
\]
for $u_1,u_2\in\fconvs$ and $\lambda_1,\lambda_2\geq 0$. In this context, let us point out that the $\ind_K^*=h_K$ for every $K\in\Kn$.

\medskip

To a convex function $v\in\fconvf$ we associate the Monge--Amp\`ere measure $\MA(v;\cdot)$, which is a Radon measure on $\Rn$ and which is defined by
\[
\MA(v;B)=V_n\left(\bigcup_{x\in B} \partial v(x) \right)
\]
for every Borel set $B\subset \Rn$. Here,
\[
\partial v(x)=\{y\in\Rn : v(z)\geq v(x)+\langle y,z-x\rangle \text{ for all } z\in\Rn\}
\]
denotes the \textit{subdifferential} of $v$ at $x\in\Rn$. Under additional $C^2$ assumptions on $v$, this measure is absolutely continuous with respect to the Lebesgue measure on $\Rn$ such that the representation \eqref{eq:MA_Hess} holds. Just like the mixed area measure arises from a polynomial expansion of the surface area measure, we have
\[
\MA(\lambda_1 v_1+\cdots+\lambda_m v_m;\cdot)=\sum_{i_1,\ldots,i_n=1}^m \lambda_{i_1}\cdots\lambda_{i_n} \MA(v_{i_1},\ldots,v_{i_n};\cdot)
\]
for $m\in\N$, $v_1,\ldots,v_m\in\fconvf$, and $\lambda_1,\ldots,\lambda_m\geq 0$, where the coefficients on the right are the so-called \textit{mixed Monge--Amp\`ere measures}. These are again defined to be symmetric in their entries and assign to each $n$-tuple of functions in $\fconvf$ a Radon measure on $\Rn$. If $v_1,\ldots,v_n\in\fconvf\cap C^2(\Rn)$, then
\[
\d\MA(v_1,\ldots,v_n;x)=D(\Hess v_1(x),\ldots,\Hess v_n(x)) \d x.
\]
Let us point out that the Monge--Amp\`ere measure and, more generally, the mixed Monge--Amp\`ere measure are invariant under the addition of affine functions, that is,
\[
\MA(v_1+a,\ldots,v_n;\cdot)=\MA(v_1,\ldots,v_n;\cdot)
\]
for every $v_1,\ldots,v_n\in\fconvf$ and affine $a\colon \Rn\to\Rn$. In particular, the measure is invariant under vertical translations of the functions. For additional information, see, for example, \cite[Theorem 4.3]{Colesanti-Ludwig-Mussnig-7}.

\medskip

Considering the Monge--Amp\`ere measure of the convex conjugate of a function leads to the \textit{conjugate Monge--Amp\`ere measure}. That is, to each $u\in\fconvs$ we assign the measure $\MAp(u;\cdot)=\MA(u^*;\cdot)$. This measure admits the representation
\begin{equation}
\label{eq:MAp}
\MAp(u;B)=\int_{\dom(u)}\chi_B(\nabla u(x)) \d x,
\end{equation}
for every Borel set $B\subset \Rn$, where $\chi_B$ denotes the \textit{characteristic function} of $B$. This means that $\MAp$ is the push-forward of the Lebesgue measure, restricted to $\dom(u)$, under the gradient of $u$. Similarly, we define the conjugate mixed Monge--Amp\`ere measure via
\[
\MAp(u_1,\ldots,u_n;\cdot)=\MA(u_1^*,\ldots,u_n^*;\cdot)
\]
for $u_1,\ldots,u_n\in\fconvs$. Observe that also the \textit{conjugate mixed Monge--Amp\`ere measure} is invariant under vertical translations of its entries.

\subsection{Functional Intrinsic Volumes}
\label{se:functional_intrinsic_volumes}
For $0\leq j\leq n$ and $\alpha\in C_c([0,\infty))$ we have introduced the (renormalized) $j$ functional intrinsic volume with density $\alpha$ in \eqref{eq:ozzb_steiner}. As was shown in \cite[Theorem 5.2]{Colesanti-Ludwig-Mussnig-7}, this operator can also be described via
\begin{equation}
\label{eq:ozzb_ma}
\oZZb{j}{\alpha}(u)=\binom{n}{j}\frac{1}{\kappa_{n-j}}\int_{\Rn} \alpha(|x|)\d \MA^*(u[j],\ind_{B^n}[n-j];x)
\end{equation}
for $u\in\fconvs$, where $u[j]$ indicates that the entry $u$ in the conjugate mixed Monge--Amp\`ere measure is repeated $j$ times. The properties of conjugate mixed Monge--Amp\`ere measures together with \eqref{eq:ozzb_ma} immediately show that
\[
\oZZb{j}{\alpha}(\lambda \sq u) = \lambda^j \, \oZZb{j}{\alpha}(u)
\]
for every $\lambda\geq 0$ and $u\in\fconvs$ and we say that $\oZZb{j}{\alpha}$ is \textit{epi-homogeneous} of degree $j$.

\medskip

We require the following result due to \cite[Lemma 8.4]{Colesanti-Ludwig-Mussnig-7}, which shows how to retrieve the densities $\alpha$ from $\oZZb{j}{\alpha}$ (cf.\ \cite[Lemma 2.15]{Colesanti-Ludwig-Mussnig-5}). For this, we will consider the family of functions $u_t\in\fconvs$, $t\geq 0$, given by
\begin{equation}
\label{eq:def_u_t}
u_t(x)= t \vert x\vert + \ind_{\Bn}(x)
\end{equation}
for $x\in\Rn$. Observe that $u_t$ has compact domain and is, therefore, also an element of the smaller space $\fconvcd$.

\begin{lemma}
\label{le:ozz_on_u_t}
If $\,1\leq j \leq n$ and  $\alpha \in C_c([0,\infty))$, then  
\[
\oZZb{j}{\alpha}(u_t)=\frac{\kappa_n}{\kappa_{n-j}}\binom{n}{j} \alpha(t)
\]
for $t\geq 0$.
\end{lemma}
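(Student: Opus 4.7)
The plan is to compute the Steiner-type expansion in \eqref{eq:ozzb_steiner} directly for $u=u_t$ and read off the coefficients. The key observation is that $u_t \infconv \ind_{r B^n}$ can be computed explicitly: by the definition of infimal convolution,
\[
(u_t \infconv \ind_{r B^n})(x) = \inf\{ t|x-y| : y\in r B^n,\ x-y\in B^n \},
\]
which, by a direct geometric argument (pick $y$ on the ray from $o$ to $x$), is equal to $t\max\{|x|-r,0\}$ when $|x|\leq r+1$ and is $+\infty$ otherwise. Thus $\dom(u_t \infconv \ind_{rB^n}) = (r+1)B^n$, the gradient vanishes on the open ball of radius $r$, and on the annulus $r<|x|<r+1$ the gradient has constant norm $t$.

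Next I would use the defining formula \eqref{eq:ozzb_n_alpha} of $\oZZb{n}{\alpha}$ to evaluate
\[
\oZZb{n}{\alpha}(u_t \infconv \ind_{r B^n}) = \alpha(0)\,V_n(r B^n) + \alpha(t)\,V_n\bigl((r+1)B^n\setminus rB^n\bigr),
\]
since the gradient takes only the two values $0$ and $t\,x/|x|$ (up to an $\hm^n$-null set). Expanding via the binomial theorem gives
\[
\oZZb{n}{\alpha}(u_t \infconv \ind_{rB^n}) = \alpha(0)\kappa_n r^n + \alpha(t)\kappa_n\bigl((r+1)^n - r^n\bigr) = \bigl(\alpha(0)-\alpha(t)\bigr)\kappa_n r^n + \alpha(t)\sum_{j=0}^n \binom{n}{j}\kappa_n r^{n-j}.
\]

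Finally, I would compare this polynomial in $r$ with the right-hand side of the Steiner formula \eqref{eq:ozzb_steiner}, namely $\sum_{j=0}^n r^{n-j}\kappa_{n-j}\oZZb{j}{\alpha}(u_t)$. Matching coefficients of $r^{n-j}$ for $1\leq j\leq n$ yields
\[
\kappa_{n-j}\,\oZZb{j}{\alpha}(u_t) = \binom{n}{j}\kappa_n\,\alpha(t),
\]
which is the claimed identity (the $r^n$ term separately recovers the consistent value $\oZZb{0}{\alpha}(u_t)=\alpha(0)$). There is no real obstacle beyond carefully justifying the computation of the infimal convolution and the explicit form of the gradient; the rest is bookkeeping. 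One subtle point worth double-checking is that the polynomial identity in $r$ indeed holds for all $r\geq 0$, so that matching coefficients is legitimate — this follows from \eqref{eq:ozzb_steiner}, which is the very definition of $\oZZb{j}{\alpha}$.
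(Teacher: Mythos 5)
Your computation is correct, and it gives a complete, self-contained proof. The paper itself does not prove this lemma: it is imported as \cite[Lemma 8.4]{Colesanti-Ludwig-Mussnig-7} (and traced back to \cite[Lemma 2.15]{Colesanti-Ludwig-Mussnig-5}), and when the authors need a computation of this flavor elsewhere (the proof of Proposition~\ref{prop:BM_conv}) they route it through the mixed Monge--Amp\`ere representation \eqref{eq:ozzb_ma} and the Cauchy--Kubota formula of Theorem~\ref{thm:ck_ma}. Your argument instead works straight from the defining relation \eqref{eq:ozzb_steiner}: the explicit formula $(u_t\infconv\ind_{r B^n})(x)=t\max\{|x|-r,0\}+\ind_{(r+1)B^n}(x)$ is right (the constraint analysis for the infimum is exactly as you describe), the gradient takes only the values $0$ on $|x|<r$ and $t\,x/|x|$ on $r<|x|<r+1$ up to a null set, and since \eqref{eq:ozzb_steiner} holds for all $r>0$, both sides are polynomials in $r$ and coefficient matching is legitimate; the $r^{n-j}$ coefficient then yields exactly $\kappa_{n-j}\oZZb{j}{\alpha}(u_t)=\binom{n}{j}\kappa_n\alpha(t)$. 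What your route buys is independence from the machinery of Section~\ref{se:functional_intrinsic_volumes}: it only uses the integral formula \eqref{eq:ozzb_n_alpha} for the top-degree functional and the polynomial expansion that defines the lower-degree ones, so it is more elementary than the cited proofs. The only point worth making explicit in a write-up is that $u_t\infconv\ind_{rB^n}\in\fconvs$ has full-dimensional domain, so that \eqref{eq:ozzb_n_alpha} applies and the gradient is defined a.e.\ on the interior of $(r+1)B^n$; this is immediate from your explicit formula.
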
 
\noindent
As a consequence, we obtain the following characterization of non-negativity, which was first obtained in \cite[Proposition 5.4]{Colesanti-Ludwig-Mussnig-6}.
\begin{lemma}
\label{le:ozz_sign}
For $1\leq j\leq n$ and $\alpha \in C_c([0,\infty))$, the operator $\oZZb{j}{\alpha}$ is non-negative on $\fconvs$ if and only if $\alpha$ is non-negative.
\end{lemma}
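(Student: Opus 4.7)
The plan is to prove the two directions separately, each of which reduces almost immediately to a fact already recorded in the excerpt.

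For the implication ``$\alpha$ non-negative $\Rightarrow$ $\oZZb{j}{\alpha}$ non-negative'', I would invoke the mixed Monge--Amp\`ere representation \eqref{eq:ozzb_ma}. The conjugate mixed Monge--Amp\`ere measure $\MAp(u[j],\ind_{B^n}[n-j];\cdot)$ is a non-negative Radon measure on $\Rn$, which follows from the standard fact that mixed Monge--Amp\`ere measures of convex functions are non-negative (verifiable directly from the mixed discriminant representation in the $C^2$ case, since mixed discriminants of positive semidefinite matrices are non-negative, and then extended by approximation/weak continuity). Since $\alpha\geq 0$ implies that $x\mapsto\alpha(|x|)$ is non-negative, and since the constant $\binom{n}{j}/\kappa_{n-j}$ is strictly positive, integration yields $\oZZb{j}{\alpha}(u)\geq 0$ for every $u\in\fconvs$.

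For the reverse implication, I would test $\oZZb{j}{\alpha}$ against the one-parameter family $u_t\in\fconvs$, $t\geq 0$, defined in \eqref{eq:def_u_t}. By Lemma~\ref{le:ozz_on_u_t},
\[
\oZZb{j}{\alpha}(u_t)=\frac{\kappa_n}{\kappa_{n-j}}\binom{n}{j}\alpha(t),
\]
and the assumed non-negativity of $\oZZb{j}{\alpha}$ together with the positivity of $\kappa_n/\kappa_{n-j}$ and $\binom{n}{j}$ forces $\alpha(t)\geq 0$ for every $t\geq 0$, which is all that the claim requires (recalling that $\alpha$ is defined on $[0,\infty)$).

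The proof is essentially routine once \eqref{eq:ozzb_ma} and Lemma~\ref{le:ozz_on_u_t} are available; the only mild obstacle is the appeal to non-negativity of the conjugate mixed Monge--Amp\`ere measure, which is standard but deserves a one-line justification (or a citation to the theory summarized in Section~\ref{se:convex_functions}). In particular, the condition $j\geq 1$ is exactly what is needed so that Lemma~\ref{le:ozz_on_u_t} applies and the testing functions $u_t$ detect the full pointwise behavior of $\alpha$ on $[0,\infty)$.
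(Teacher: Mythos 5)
Your proposal is correct and follows essentially the same route the paper takes: the paper presents this lemma precisely as a consequence of Lemma~\ref{le:ozz_on_u_t} (testing on the family $u_t$ gives the ``only if'' direction), while the ``if'' direction is the standard one via the representation \eqref{eq:ozzb_ma} and the non-negativity of conjugate mixed Monge--Amp\`ere measures. Your one-line justification of that non-negativity (mixed discriminants of positive semidefinite matrices plus weak continuity) is exactly the right remark to include.
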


\medskip

Similar to the classical intrinsic volumes on convex bodies, there are integral-geometric identities for functional intrinsic volumes. The following \textit{Cauchy--Kubota-type formulas} were first established for rotation invariant integrands in \cite{Colesanti-Ludwig-Mussnig-5} and later, in a more general form, in \cite[Theorem 5.1]{Hug_Mussnig_Ulivelli}. For $u\in\fconvs$ and a linear subspace $E\subseteq \Rn$ we write $\proj_E u\colon E\to (-\infty,\infty]$ for the \textit{projection function} of $u$, defined by
$$\proj_E u(x_E) = \min\nolimits_{z\in E^\perp} u(x_E+z)$$
for $x_E\in E$, where $E^\perp$ denotes the orthogonal complement of $E$. In addition, we denote by $\MAp_E$ the conjugate mixed Monge--Amp\`ere measure of super-coercive convex functions defined on $E$.

\begin{theorem}
\label{thm:ck_ma}
If $1\leq k< n$ and $\varphi\in C_c(\Rn)$, then
\begin{align*}
\frac{1}{\kappa_n} \int_{\Rn} \varphi(x)&\d\MA^*(u_1,\ldots,u_k,\ind_{B^n}[n-k];x)\\
&=\frac{1}{\kappa_k}\int_{\Grass{k}{n}}\int_E \varphi(x_E) \d\MA^*_E(\proj_E u_1,\ldots, \proj_E u_k;x_E)\d E
\end{align*}
for every $u_1,\ldots,u_k\in\fconvs$, where we integrate with respect to the Haar probability measure on the Grassmannian $\Grass{k}{n}$.
\end{theorem}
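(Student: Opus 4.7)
The plan is to reduce the identity to the classical local Cauchy--Kubota formula for mixed surface area measures of convex bodies in $\RN$ via a dictionary between convex functions in $\fconvs$ and bounded convex bodies. For $u \in \fconvs$ and $M$ large, consider the truncated epi-graph $L_u^M = \epi(u) \cap \{(x,t) : t \leq M\} \subset \RN$, which is a bounded convex body by the super-coercivity of $u$. The graph of $u$ is precisely the part of $\bd L_u^M$ whose outer unit normals lie in the lower half-sphere $\sN_{-}$, so the gnomonic projection $\gnom\colon \sN_{-} \to \Rn$ transports the restriction of the surface area measure $S_n(L_u^M, \cdot)$ to $\sN_{-}$ onto the conjugate Monge--Amp\`ere measure $\MAp(u; \cdot)$, up to the Jacobian factor $|\nu_{n+1}|^{-(n+1)}$. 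Polynomial expansion (the mixed-area-measure formula \eqref{eq:mixed_area_measure} paired with its functional counterpart) extends this dictionary to the mixed setting, so that the gnomonic push-forward of $S(L_{u_1}^M, \ldots, L_{u_k}^M, B^{n+1}[n-k], \cdot)\big\vert_{\sN_{-}}$ recovers $\MAp(u_1, \ldots, u_k, \ind_{B^n}[n-k]; \cdot)$ up to an explicit constant.

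Next, one verifies compatibility of the dictionary with projections: for $E \in \Grass{k}{n}$ and $\tilde E = E + \R e_{n+1}$, an element of $\Grass{k+1}{n+1}$, the projection of $L_u^M$ onto $\tilde E$ agrees (modulo the flat cap at $t = M$) with $L_{\proj_E u}^M$, since projecting an epi-graph along $E^\perp$ realises exactly the infimal-convolution definition of $\proj_E u$. With this in hand, the plan is to apply the classical local Cauchy--Kubota formula for mixed surface area measures in $\RN$ with $n-k$ copies of $B^{n+1}$, restricted to $\sN_{-}$ and tested against $\varphi \circ \gnom$. The Grassmannian average on the body side will then be restricted to subspaces $\tilde E$ containing $e_{n+1}$, which by invariance and a Fubini-type argument reduces to an average over $\Grass{k}{n}$. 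Translating both sides back through the dictionary yields the claimed identity.

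The main obstacle is the careful bookkeeping of constants: the Jacobian $|\nu_{n+1}|^{-(n+1)}$ of $\gnom$, the relative normalisation of the Haar measures on $\Grass{k+1}{n+1}$ and $\Grass{k}{n}$, and the intrinsic-volume factors present in the classical Cauchy--Kubota formula must combine to reproduce the prefactor $\kappa_k/\kappa_n$. A secondary technical point is that $\ind_{B^n}^* = h_{B^n}$ is not $C^2$ at the origin, so the correspondence with $B^{n+1}$ on the body side cannot be justified via Hessian manipulations; one handles this by approximating $\ind_{B^n}$ with smoother elements of $\fconvs$ that epi-converge to it, invoking weak continuity of the conjugate mixed Monge--Amp\`ere measure (the dual counterpart of the weak continuity of mixed surface area measures noted in Section~\ref{se:convex_bodies}). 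Finally, the truncation parameter $M$ drops out thanks to the compact support of $\varphi$, which confines all relevant contributions to a bounded portion of $\sN_{-}$, and the full statement follows by continuity and density.
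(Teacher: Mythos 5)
First, note that the paper does not prove Theorem~\ref{thm:ck_ma}: it is quoted from \cite[Theorem 5.1]{Hug_Mussnig_Ulivelli} (with the rotation-invariant case going back to \cite{Colesanti-Ludwig-Mussnig-5}), so there is no in-paper argument to compare with and your proposal must stand on its own. Your overall framework --- transferring the statement to mixed area measures of epigraph-type bodies in $\RN$ via the gnomonic projection --- is the right one (it is how the paper operates throughout, cf.\ Lemmas~\ref{le:int_ku} and~\ref{le:int_ku_mixed}), but two of your steps fail. The first is the dictionary in the slots occupied by $\ind_{B^n}$: under $u\mapsto K^u$ the function $\ind_{B^n}$ corresponds to the flat disc $D=B^n\times\{0\}\subset e_{n+1}^\perp$, not to the ball $B^{n+1}$ (indeed $\lfloor B^{n+1}\rfloor(x)=-\sqrt{1-|x|^2}+\ind_{B^n}(x)$), and Lemma~\ref{le:int_ku_mixed} accordingly identifies $\MAp(u_1,\ldots,u_k,\ind_{B^n}[n-k];\cdot)$ with the weighted restriction to $\sN_-$ of $S(K^{u_1},\ldots,K^{u_k},D[n-k],\cdot)$. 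Replacing $D$ by $B^{n+1}$ changes the mixed area measure, and no smoothing of $h_{B^n}$ repairs this, since the two bodies are simply different.

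The second and more serious gap is that the ``classical local Cauchy--Kubota formula for mixed surface area measures'' you invoke does not exist in the form you need. An identity of the type
\[
\int_{\sN}f(\nu)\d S(K_1,\ldots,K_k,B^{n+1}[n-k],\nu)=c\int_{\Grass{k+1}{n+1}}\int_{\sN\cap E}f(\nu)\d S^{E}(K_1|E,\ldots,K_k|E,\nu)\d E
\]
for all continuous $f$ is false: already for $n+1=3$, $k=1$, and $K_1$ a segment in direction $e$, the left side is concentrated on the great circle $\sN\cap e^\perp$, while the right side is absolutely continuous on $\sN$, so the two measures are mutually singular for every choice of $c$. What is classical is the global Kubota formula ($f\equiv 1$) and the Cauchy--Kubota formula for mixed volumes ($f$ a support function); genuine localizations lead to flag measures rather than back to the mixed area measure. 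The correct local identity --- with flat discs in the last $n-k$ slots, test functions of the special form $\varphi(\gnom(\nu))|\nu_{n+1}|$ vanishing near the equator, and the average taken over the family of subspaces containing $e_{n+1}$ --- is precisely the nontrivial content of \cite[Theorem 5.1]{Hug_Mussnig_Ulivelli}, so your argument assumes the hard part. Relatedly, reducing the average over all of $\Grass{k+1}{n+1}$ to the subspaces containing $e_{n+1}$ ``by invariance and a Fubini-type argument'' cannot work: that family is a Haar-null subset and the bodies $K^{u_i}$ carry no rotational symmetry. (A minor point: the weight relating $S_n(K^u,\cdot)\vert_{\sN_-}$ to $\MAp(u;\cdot)$ is $|\nu_{n+1}|=1/\sqrt{1+|\gnom(\nu)|^2}$ as in Lemma~\ref{le:int_ku}, not the Jacobian $|\nu_{n+1}|^{-(n+1)}$ of $\gnom$.)
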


\section{Counterexamples}
\label{se:counterexample}
\subsection{Brunn--Minkowski-type Inequalities}
\label{se:bm_ineq}
The general Brunn--Minkowski inequality for convex bodies, \eqref{eq:bm_general}, together with \eqref{eq:ozz_on_ind} and the epi-homogeneity of functional intrinsic volumes show that
\begin{equation}
\label{eq:bm_ozz_ind}
\left(\oZZb{j}{\alpha}\big((1-\lambda)\sq\ind_K \infconv \lambda\sq\ind_L\big)\right)^{\frac 1j} \geq (1-\lambda) \left(\oZZb{j}{\alpha}(\ind_K) \right)^{\frac 1j} + \lambda \left(\oZZb{j}{\alpha}(\ind_L) \right)^{\frac 1j}
\end{equation}
for every $1\leq j\leq n$, $K,L\in \Kn$, $0\leq\alpha\leq 1$, and $\alpha\in C_c([0,\infty))$ such that $\alpha(0)\geq 0$. Given the similarities between functional intrinsic volumes and their classical counterparts, it is therefore only natural to ask whether such inequalities also hold if we replace indicator functions in \eqref{eq:bm_ozz_ind} with more general functions $u,v\in\fconvs$. That is, for $1\leq j\leq n$ and suitable $\alpha\in C_c([0,\infty))$, we ask if
\begin{equation}
\label{eq:bm_ozzb_question}
\left(\oZZb{j}{\alpha}\big((1-\lambda)\sq u \infconv \lambda\sq v\big)\right)^{\frac 1j} \geq (1-\lambda) \left(\oZZb{j}{\alpha}(u) \right)^{\frac 1j} + \lambda \left(\oZZb{j}{\alpha}(v) \right)^{\frac 1j}
\end{equation}
holds for every $u,v\in\fconvs$ and $0\leq \lambda \leq 1$.

Indeed, in case $j=1$, inequality \eqref{eq:bm_ozzb_question} is always true and is, in fact, an equality since the operators $\oZZb{1}{\alpha}$ are linear with respect to infimal convolution. For $j\geq 2$, however, we show that the Brunn--Minkowski-type inequality \eqref{eq:bm_ozzb_question} does not hold in general, even when restricting to convex functions with compact domains. Note that by Lemma~\ref{le:ozz_sign}, such an inequality would only be meaningful if $\alpha$ is non-negative. Let us furthermore recall the definition of $u_t$ in \eqref{eq:def_u_t}.

\begin{proposition}
\label{prop:BM_conv}
For every $2\leq j\leq n$ and non-negative $\alpha \in C_c([0,\infty))$, $\alpha \not\equiv 0$, there exist $t_1,t_2\geq 0$ such that
\[
\left(\oZZb{j}{\alpha}(u_{t_1} \infconv u_{t_2})\right)^{\frac 1j} <  \left(\oZZb{j}{\alpha}(u_{t_1}) \right)^{\frac 1j} + \left(\oZZb{j}{\alpha}(u_{t_2}) \right)^{\frac 1j}.
\]
\end{proposition}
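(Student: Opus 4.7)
The plan is to test the inequality on the cone-type family $u_t$ from Lemma~\ref{le:ozz_on_u_t}. Since $\oZZb{j}{\alpha}(u_t)=c_{n,j}\alpha(t)$ with $c_{n,j}=\binom{n}{j}\kappa_n/\kappa_{n-j}$, all that is missing is an explicit formula for $\oZZb{j}{\alpha}(u_{t_1}\infconv u_{t_2})$; once available, the desired failure becomes a scalar question about the values $\alpha(t_1)$ and $\alpha(t_2)$.

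First I would compute the infimal convolution. Assuming $t_1\le t_2$ (by symmetry) and minimizing $t_1|x_1|+t_2|x_2|$ subject to $x_1+x_2=x$ and $|x_i|\le 1$, the optimum is attained on the ray through $x$ with the $t_1$-side saturated to $|x_1|=1$ before any mass is allocated to the $t_2$-side. The same minimization with an additional $\ind_{rB^n}$ factor produces a three-zone piecewise linear function whose gradient has norm $0$ on $rB^n$, $t_1$ on $(r+1)B^n\setminus rB^n$, and $t_2$ on $(r+2)B^n\setminus(r+1)B^n$. Applying \eqref{eq:ozzb_n_alpha} and reading off the coefficient of $r^{n-j}$ from the Steiner expansion \eqref{eq:ozzb_steiner} yields
\[
\oZZb{j}{\alpha}(u_{t_1}\infconv u_{t_2})=c_{n,j}\bigl[\alpha(t_1)+(2^j-1)\alpha(t_2)\bigr],
\]
a formula I would sanity-check against the epi-homogeneity identity $u_t\infconv u_t=2\sq u_t$ at $t_1=t_2$, which produces the factor $2^j$ on both sides.

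The desired strict inequality then reduces to
\[
\bigl[\alpha(t_1)+(2^j-1)\alpha(t_2)\bigr]^{1/j}<\alpha(t_1)^{1/j}+\alpha(t_2)^{1/j}.
\]
For $j\ge 2$, with $a=\alpha(t_1)>0$ fixed, the left-hand side is smooth in $b=\alpha(t_2)$ with finite derivative $\tfrac{2^j-1}{j}a^{(1-j)/j}$ at $b=0$, while the right-hand side has an infinite derivative at $b=0^+$; hence the inequality holds strictly for $a>0$ and all sufficiently small $b>0$. To realize these values, I would take any $t_1\ge 0$ with $\alpha(t_1)>0$ and, exploiting the compact support together with the continuity and non-triviality of $\alpha$, choose $t_2>t_1$ close to the boundary of $\supp\alpha$ so that $\alpha(t_2)$ is positive but arbitrarily small. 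The main obstacle is the piecewise inf-convolution computation and the correct identification of the coefficients $1$ and $2^j-1$ attached to the two gradient regimes; once these are in hand, the scalar analysis and the construction of admissible $t_1,t_2$ are immediate, and the argument reveals the mechanism behind the failure: the functional weighs the inner and outer gradient zones of $u_{t_1}\infconv u_{t_2}$ unequally, so it cannot behave linearly under epi-scaling in the way Brunn--Minkowski would demand.
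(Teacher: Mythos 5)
Your proposal is correct, and while the counterexample itself is the same as the paper's (the cones $u_{t_1},u_{t_2}$ and the identity $\oZZb{j}{\alpha}(u_{t_1}\infconv u_{t_2})=c_{n,j}[\alpha(t_1)+(2^j-1)\alpha(t_2)]$ for $t_1\le t_2$), you arrive at it by a genuinely different route. The paper derives the formula by applying the Cauchy--Kubota formula (Theorem~\ref{thm:ck_ma}) to reduce to $j$-dimensional projections, whereas you extract the coefficient of $r^{n-j}$ directly from the Steiner expansion \eqref{eq:ozzb_steiner} after computing $\oZZb{n}{\alpha}\big(u_{t_1}\infconv u_{t_2}\infconv\ind_{rB^n}\big)$ via \eqref{eq:ozzb_n_alpha} on the three gradient annuli; expanding $(1+r)^n$ and $(2+r)^n$ indeed yields the weights $\binom{n}{j}$ and $\binom{n}{j}2^j$, hence $1$ and $2^j-1$ after subtraction. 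This is more elementary and self-contained, since it uses only the defining relation of $\oZZb{j}{\alpha}$ rather than an imported integral-geometric theorem, and your $t_1=t_2$ sanity check against epi-homogeneity is a nice confirmation. The scalar endgames also differ: the paper takes any $t_1<t_2$ with $\alpha(t_1)>\alpha(t_2)$ and compares against the binomial expansion of $\big(\alpha(t_1)^{1/j}+\alpha(t_2)^{1/j}\big)^j$ (where strictness tacitly needs $\alpha(t_2)>0$, just as your argument does), while you fix $a=\alpha(t_1)>0$ and exploit the infinite slope of $b\mapsto b^{1/j}$ at $b=0^+$ for $j\ge 2$, choosing $t_2>t_1$ near $\max\supp\alpha$ where $\alpha$ is positive but small; both selections are available for continuous, compactly supported, non-negative $\alpha\not\equiv0$. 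Your version localizes the failure near the edge of the support, the paper's shows it occurs whenever $0<\alpha(t_2)<\alpha(t_1)$ with $t_1<t_2$; either suffices for the proposition.
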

\begin{proof}
Let $2\leq j\leq n$ and let a non-negative $\alpha\in C_c([0,\infty))$, $\alpha\not\equiv 0$ be given. By the properties of $\alpha$, there exist $t_1< t_2$ such that
\begin{equation}
\label{eq:alpha_ineq}
\alpha(t_1)>\alpha(t_2).
\end{equation}
For the infimal convolution of $u_{t_1}$ and $u_{t_2}$ we now have
\[
(u_{t_1}\infconv u_{t_2}) (x) = \begin{cases}
t_1 |x|,\quad &\text{for } 0\leq |x| \leq 1\\
t_2 (|x|-1)+t_1,\quad &\text{for } 1 < |x| \leq 2\\
+\infty, \quad &\text{for } 2 < |x|.
\end{cases}
\]
Thus, it follows from Theorem~\ref{thm:ck_ma} that
\begin{align*}
\oZZb{j}{\alpha}(u_{t_1}\infconv u_{t_2}) &= 
\binom{n}{j}\frac{\kappa_n}{\kappa_j\kappa_{n-j}}\int_{\Grass{j}{n}}\int_E \alpha(|x_E|) \d\MA^*_E(\proj_E (u_{t_1} \infconv u_{t_2});x_E)\, dE\\
&=\binom{n}{j}\frac{\kappa_n}{\kappa_j\kappa_{n-j}}\int_{\Grass{j}{n}}\int_{\dom (\proj_E (u_{t_1}\infconv u_{t_2}))} \alpha(|\nabla \proj_E (u_{t_1}\infconv u_{t_2})(x_E)|) \d x_E\, dE\\
&= \frac{\kappa_n}{\kappa_j\kappa_{n-j}} \binom{n}{j} \left(\kappa_j \alpha(t_1) + (2^j-1)\kappa_j \alpha(t_2) \right)\\
&= \frac{\kappa_n}{\kappa_{n-j}} \binom{n}{j} \left(\alpha(t_1)+(2^j-1)\alpha(t_2)\right).
\end{align*}
Together with \eqref{eq:alpha_ineq} and Lemma~\ref{le:ozz_on_u_t} this shows
\begin{align*}
\oZZb{j}{\alpha}(u_{t_1}\infconv u_{t_2}) &= \frac{\kappa_n}{\kappa_{n-j}} \binom{n}{j} \left(\alpha(t_1) + (2^j-1) \alpha(t_2)\right)\\
&= \frac{\kappa_n}{\kappa_{n-j}} \binom{n}{j}\left(\binom{j}{j} \alpha(t_1) + \alpha(t_2) \sum_{i=0}^{j-1} \binom{j}{i}\right)\\
&< \frac{\kappa_n}{\kappa_{n-j}} \binom{n}{j} \sum_{i=0}^j \binom{j}{i} \alpha(t_1)^{\frac ij} \alpha(t_2)^{\frac{j-i}{j}}\\
&=\left(\left(\oZZb{j}{\alpha}(u_{t_1}) \right)^{\frac 1j} + \left(\oZZb{j}{\alpha}(u_{t_2}) \right)^{\frac 1j}\right)^j,
\end{align*}
which completes the proof.
\end{proof}

In the following, we give a more in-depth explanation of why inequality \eqref{eq:bm_ozzb_question} fails in general. For simplicity, we only consider the case $j=n$ below. We start by recalling a result which is essentially due \cite[Corollary 2]{Knoerr_Ulivelli} and the formulation below is an immediate consequence of \cite[Lemma 4.10]{Hug_Mussnig_Ulivelli} and \cite[Lemma 6.1]{Hug_Mussnig_Ulivelli_2}. We furthermore remark that we will discuss this connection between convex functions on $\Rn$ and convex bodies in $\RN$ in more detail in Section~\ref{suse:bodies_fcts}.

\begin{lemma}
\label{le:ozzb_int_sn}
For every $\alpha\in C_c([0,\infty))$ and $u\in\fconvs$ there exists a convex body $K\in\KN$ such that
\begin{equation}
\label{eq:ozzb_int_sn}
\oZZb{n}{\alpha}(u) = \int_{\sN_-} \tilde{\alpha}(|\langle \nu,e_{n+1} \rangle|) \d S_n(K,\nu),
\end{equation}
where
\begin{equation}
\label{eq:tilde_alpha}
\tilde{\alpha}(|\langle \nu,e_{n+1} \rangle|) = \frac{\alpha(|\gnom(\nu)|)}{\sqrt{1+|\gnom(\nu)|^2}}
\end{equation}
for $\nu\in\sN_-$. Conversely, for every $K\in\KN$ there exists $u\in\fconvcd$ such that \eqref{eq:ozzb_int_sn} holds for every $\alpha\in C_c([0,\infty))$.
\end{lemma}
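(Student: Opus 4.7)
The plan is to realize every $u\in\fconvs$ as the ``lower graph'' of a suitable convex body $K\in\KN$ and then translate the Lebesgue integral in \eqref{eq:ozzb_n_alpha} into a boundary integral over $\sN_-$ by means of the Gauss map. The normalization in the definition \eqref{eq:tilde_alpha} of $\tilde\alpha$ is tailored precisely so that this translation becomes a tautology, once the correct body is in place.

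The computational core is a direct change of variables. Suppose the portion of $\bd(K)$ with outer unit normal in $\sN_-$ is parametrized as the graph $\{(y,g(y)):y\in\dom g\}$ of a convex function $g$. At every $y$ where $g$ is differentiable, the outer unit normal is
\[
\nu=\frac{(\nabla g(y),-1)}{\sqrt{1+|\nabla g(y)|^2}}\in\sN_-,
\]
so that $\gnom(\nu)=\nabla g(y)$ and $|\nu_{n+1}|=1/\sqrt{1+|\nabla g(y)|^2}$. Since $S_n(K,\cdot)\vert_{\sN_-}$ is the push-forward of $\hm^n$ restricted to the lower boundary under the Gauss map, and the surface element on the graph of $g$ equals $\sqrt{1+|\nabla g(y)|^2}\d y$, I obtain
\[
\int_{\sN_-}\tilde\alpha(|\langle\nu,e_{n+1}\rangle|)\d S_n(K,\nu)=\int_{\dom g}\frac{\alpha(|\nabla g(y)|)}{\sqrt{1+|\nabla g(y)|^2}}\,\sqrt{1+|\nabla g(y)|^2}\d y=\int_{\dom g}\alpha(|\nabla g(y)|)\d y,
\]
matching \eqref{eq:ozzb_n_alpha} with $g$ in place of $u$.

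For the forward direction, fix $u\in\fconvs$ and $\alpha\in C_c([0,\infty))$ with $\supp\alpha\subseteq[0,R]$. Super-coercivity of $u$, combined with the subgradient inequality $u(y_0)\geq u(y)+\langle\nabla u(y),y_0-y\rangle$, forces the set $\{y\in\dom u:|\nabla u(y)|\leq R\}$ to be bounded; hence it is contained in $\{u\leq M\}$ for some $M>\inf u$, and $\{u\leq M\}$ itself is bounded by super-coercivity. Then
\[
K=\epi(u)\cap\bigl(\Rn\times(-\infty,M]\bigr)
\]
is a compact convex body in $\RN$, its lower boundary is the graph of $u$ over $\{u\leq M\}$, and the change of variables above with $g=u|_{\{u\leq M\}}$, together with the vanishing of $\alpha(|\nabla u|)$ outside $\{u\leq M\}$, yields \eqref{eq:ozzb_int_sn}. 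For the converse, given $K\in\KN$ I set $u(y)=\min\{t\in\R:(y,t)\in K\}$ for $y\in\proj_{\Rn}K$ and $u(y)=+\infty$ otherwise. Then $u$ is convex and lower semicontinuous with compact domain $\proj_{\Rn}K$, so $u\in\fconvcd$; its graph is the lower boundary of $K$; and the identical computation delivers \eqref{eq:ozzb_int_sn} for every $\alpha\in C_c([0,\infty))$.

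The main hurdles are bookkeeping rather than conceptual: the Gauss map is well-defined only on the $\hm^n$-full subset of regular boundary points of $K$; the ``vertical'' sides of $K$ (with normals in $\sN_0$) and the ``lid'' $\{u\leq M\}\times\{M\}$ (with normal $e_{n+1}$) do not touch $\sN_-$ and thus contribute nothing to the right-hand side of \eqref{eq:ozzb_int_sn}; and degenerate, non-full-dimensional $K$ can be handled by a short approximation argument. The conceptual point is that super-coercivity of $u$ is precisely what licenses the truncation at height $M$, guaranteeing that the compact body $K$ still captures all the mass of $\oZZb{n}{\alpha}(u)$.
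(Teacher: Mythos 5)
Your argument is correct and follows essentially the same route as the paper, which obtains the lemma by citation of \cite{Knoerr_Ulivelli} and \cite{Hug_Mussnig_Ulivelli,Hug_Mussnig_Ulivelli_2}: the content of those references is precisely your area-formula computation on the lower boundary of a capped epigraph (stated in the paper as Lemma~\ref{le:int_ku}), with the Jacobian $\sqrt{1+|\nabla g|^2}$ cancelling the normalization built into $\tilde{\alpha}$. Your truncation of $\epi(u)$ at height $M$ --- justified by the compact support of $\alpha$ together with super-coercivity, so that no mass of $\oZZb{n}{\alpha}(u)$ is lost --- is exactly the step needed to pass from compact-domain functions to general $u\in\fconvs$, and your converse via the lower-graph function is the paper's construction $K\mapsto\lfloor K\rfloor$ from Section~\ref{suse:bodies_fcts}.
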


The geometric phenomenon that explains the failure of a Brunn--Minkowski-type inequality for functional intrinsic volumes is due to \cite[Theorem 1.1]{Colesanti-Hug-Saorin_Gomez_JGA}. We formulate this result with respect to the ambient space $\R^{n+1}$. See also \cite{Colesanti-Hug-Saorin_Gomez_CCM}.

\begin{theorem}
\label{thm:BM_sn}
Let $f\in C(\sN)$ and set
\[
F(K)=\int_{\sN} f(\nu) \d S_n(K,\nu)
\]
for $K\in\KN$. The inequality
\begin{equation}
\label{eq:BM_sn}
F(K+L)^{\frac 1n} \geq F(K)^{\frac 1n} + F(L)^{\frac 1n}
\end{equation}
holds for every $K,L\in\KN$, if and only if $f$ is the restriction of a support function to $\sN$.
\end{theorem}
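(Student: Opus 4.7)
The sufficiency direction is a standard consequence of mixed volume theory. If $f$ is the restriction to $\sN$ of $h_M$ for some $M \in \KN$, then combining \eqref{eq:mixed_vol_int_s} with the identity $S_n(K,\cdot) = S(K[n],\cdot)$ identifies $F(K) = (n+1)\, V(M, K[n])$. The inequality \eqref{eq:BM_sn} then reduces to the classical Brunn--Minkowski inequality for mixed volumes with a single varying entry, which is a well-known consequence of the Alexandrov--Fenchel inequality \eqref{eq:AF} and can be found in \cite{Schneider_CB}.

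For the necessity direction, my plan is to pass to an infinitesimal version of \eqref{eq:BM_sn} and then use it to force the structure of $f$. Since $F$ is $n$-homogeneous (as $S_n(\lambda K,\cdot) = \lambda^n S_n(K,\cdot)$), the inequality \eqref{eq:BM_sn} is equivalent to $F(K+tL)^{1/n} \geq F(K)^{1/n} + t\, F(L)^{1/n}$ for every $t \geq 0$. Expanding the left-hand side using the polynomial expansion \eqref{eq:mixed_area_measure} of $S_n(K+tL,\cdot)$ and differentiating at $t = 0^+$ produces a Minkowski-type first inequality
\[
\int_{\sN} f\, dS(L, K[n-1], \cdot) \geq F(K)^{(n-1)/n} F(L)^{1/n},
\]
valid for all $K, L \in \KN$, with equality whenever $L$ is a positive homothet of $K$.

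The main obstacle is the final step: deducing from this infinitesimal inequality that $f$ coincides on $\sN$ with a support function. I would first test the inequality with $K = B^{n+1}$, where $dS(L, B^{n+1}[n-1], \cdot)$ is a classical object whose integral against $h_L$ reproduces (up to constants) the mean width of $L$; this yields a setting in which the equality case around the ball can be probed by smooth perturbations of $L$. Alternatively, a contradiction argument may be cleaner: if $f$ were not a support function on $\sN$, one could decompose $f = \tilde f + r$ with $\tilde f$ the largest support function majorized by $f$ and $r \not\equiv 0$, then construct a pair $(K,L)$ whose mixed area measure $S(L,K[n-1],\cdot)$ concentrates on directions where $r < 0$ and where $f > 0$ to violate \eqref{eq:BM_sn} strictly. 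The delicate point common to both approaches is that the equality conditions of the Alexandrov--Fenchel-type inequality underlying the first Minkowski-type inequality are only partially known; one would therefore have to approximate by smooth, strictly convex bodies and use the local explicit form of the mixed area measure to read off $f$ pointwise from the integral inequality, then pass to the limit.
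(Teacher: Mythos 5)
The paper offers no proof of this statement at all: it is imported, with a shift of dimension, from Colesanti, Hug, and Saor\'in G\'omez \cite[Theorem 1.1]{Colesanti-Hug-Saorin_Gomez_JGA}, so there is no internal argument to compare yours against. Judged on its own terms, your sufficiency direction is correct and standard: for $f=h_M\vert_{\sN}$ one has $F(K)=(n+1)V(M,K[n])$ by \eqref{eq:mixed_vol_int_s}, and \eqref{eq:BM_sn} is then the general Brunn--Minkowski inequality for mixed volumes, a consequence of Alexandrov--Fenchel.

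The necessity direction, which carries all the content of the theorem, is a plan rather than a proof, and the plan as written cannot be executed. First, your contradiction strategy is internally inconsistent: if $\tilde f$ is the largest support function majorized by $f$, then $r=f-\tilde f\geq 0$ everywhere, so the set of ``directions where $r<0$'' on which you propose to concentrate $S(L,K[n-1],\cdot)$ is empty; moreover $\tilde f$ need not even exist (e.g.\ $f\equiv -1$ lies above no support function, since $h_M(\nu)+h_M(-\nu)\geq 0$). Second, the first-order Minkowski-type inequality you extract by differentiating at $t=0^+$ (which already presupposes $F(K)>0$) is not known to characterize support functions by itself; the argument in the literature uses the full concavity of $t\mapsto F(K+tL)^{1/n}$, i.e.\ the second-order, Alexandrov--Fenchel-type consequence of \eqref{eq:BM_sn}, tested against carefully chosen families of bodies (including degenerate and polytopal ones) to show that the positively $1$-homogeneous extension of $f$ to $\RN$ is sublinear. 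Neither of the two routes you sketch is carried through to that conclusion, so the ``only if'' implication remains unproved. Given that the paper itself treats this as a citation, the honest options are either to cite \cite{Colesanti-Hug-Saorin_Gomez_JGA} for necessity or to reproduce their argument in full; the present sketch does neither.
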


\begin{proof}[Alternative proof of Proposition~\ref{prop:BM_conv} for $j=n$]
Let $\alpha\in C_c([0,\infty))$ be given and let $\tilde{\alpha}$ be as in \eqref{eq:tilde_alpha}. We define $f\in C(\sN)$ as
\[
f(\nu) = \begin{cases}
\tilde{\alpha}(|\langle \nu,e_{n+1}\rangle|)\quad &\text{if } \nu\in \sN_{-},\\
0\quad &\text{else}.
\end{cases}
\]
By Lemma~\ref{le:ozzb_int_sn} the inequality
\[
\left(\oZZb{n}{\alpha}(u\infconv v)\right)^{\frac 1n}\geq \left(\oZZb{n}{\alpha}(u)\right)^{\frac 1n}+\left(\oZZb{n}{\alpha}(v)\right)^{\frac 1n}
\]
holds for every $u,v\in\fconvs$ if and only if \eqref{eq:BM_sn} holds for every $K,L\in\KN$. Here, we have used that if $K$ and $L$ are the convex bodies associated with $u$ and $v$, respectively, then $K+L$ is the body associated with $u\infconv v$ (we postpone the proof of this statement to Lemma~\ref{le:infconv_ku} below). By Theorem~\ref{thm:BM_sn}, this implies that $f$ must be the restriction of a support function to $\sN$. However, since $\alpha$ has compact support, $f$ must vanish in a neighborhood of the equator $\{\nu\in \sN : \langle \nu,e_{n+1}\rangle =0\}$. The only support function that satisfies this property is $h_{\{o\}}$, which corresponds to the trivial case $\alpha\equiv 0$.
\end{proof}

\subsection{Isoperimetric Inequalities}
Throughout the following, let $1\leq j <k \leq n$. The isoperimetric inequality and its more general form \eqref{eq:iso} can be stated as follows: among all convex bodies with fixed $k$th intrinsic volume, Euclidean balls minimize the $j$th intrinsic volume.

As in the case of Brunn--Minkowski-type inequalities, it is a consequence of Lemma~\ref{le:ozz_sign} that analogs of such inequalities for functional intrinsic volumes can only be expected for non-negative densities $\alpha \in C_c([0,\infty))$. By \eqref{eq:ozz_on_ind}, one retrieves intrinsic volumes of convex bodies from convex indicator functions, and therefore, the same density $\alpha$ should appear on both sides of an isoperimetric-type inequality for functional intrinsic volumes. Thus, it is reasonable to expect an inequality (or isoperimetric principle) that compares the functionals 
\[\oZZb{j}{\alpha} \text{ and }\oZZb{k}{\alpha}\]
for some non-negative $\alpha\in C_c([0,\infty))$. In order to show that such an isoperimetric principle fails, we need the following consequence of \cite[Corollary 1.5]{Hug_Mussnig_Ulivelli} (see also \cite[Theorem 5.11]{Hug_Mussnig_Ulivelli}).

\begin{lemma}
\label{le:ozzb_vanish}
Let $1\leq j\leq k\leq n$ and let $\alpha\in C_c({[0,\infty)})$ be non-negative. If $u\in\fconvs$ is such that $\oZZb{j}{\alpha}(u)=0$, then also $\oZZb{k}{\alpha}(u)=0$.
\end{lemma}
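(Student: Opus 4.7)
The plan is a two-step reduction: first, via the Cauchy--Kubota identities, reduce to the top-dimensional case; second, handle the top-dimensional case through the body--function correspondence of Lemma~\ref{le:ozzb_int_sn} together with \cite[Corollary~1.5]{Hug_Mussnig_Ulivelli}.

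For the first step, specialize Theorem~\ref{thm:ck_ma} to $\varphi(x)=\alpha(|x|)$ and $u_1=\cdots=u_\ell=u$ and combine with \eqref{eq:ozzb_ma} to obtain, for every $1\leq\ell\leq n$,
\[
\oZZb{\ell}{\alpha}(u)=c_{n,\ell}\int_{\Grass{\ell}{n}}I_\ell(E)\,\d E,\qquad
I_\ell(E):=\int_E\alpha(|x_E|)\,\d\MA^*_E(\proj_E u;x_E),
\]
with positive constants $c_{n,\ell}$; for $\ell=n$ this is just the definition of $\oZZb{n}{\alpha}$. Since $\alpha\geq 0$ gives $I_\ell(E)\geq 0$, the vanishing of $\oZZb{\ell}{\alpha}(u)$ is equivalent to $I_\ell(E)=0$ for a.e.\ $E\in\Grass{\ell}{n}$. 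Using $\proj_E u=\proj_E(\proj_F u)$ for $E\subseteq F$, applying Cauchy--Kubota inside each $F\in\Grass{k}{n}$, and invoking the standard disintegration of the Haar measure on $\Grass{j}{n}$ through $\Grass{k}{n}$, the hypothesis $\oZZb{j}{\alpha}(u)=0$ forces the $j$-th functional intrinsic volume (with density $\alpha$) of $\proj_F u$ on $F$ to vanish for a.e.\ $F\in\Grass{k}{n}$. Therefore it suffices to prove the following top-dimensional statement: in any ambient Euclidean space of any dimension $N$, the vanishing of the $j$-th functional intrinsic volume (with non-negative density) of a super-coercive convex function implies the vanishing of the $N$-th one. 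Once this is established, applying it to each such $F$ of dimension $k$ gives $I_k(F)=0$ for a.e.\ $F\in\Grass{k}{n}$, and the Cauchy--Kubota identity with $\ell=k$ yields $\oZZb{k}{\alpha}(u)=0$.

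The main obstacle is thus this top-dimensional case. To handle it, one associates, via Lemma~\ref{le:ozzb_int_sn}, a convex body $K_w\in\K^{N+1}$ to a super-coercive convex $w$ on an $N$-dimensional space so that the $N$-th functional intrinsic volume of $w$ (with density $\alpha$) equals $\int_{\mathbb{S}^N_{-}}\tilde\alpha\,\d S_N(K_w,\cdot)$, with $\tilde\alpha$ as in \eqref{eq:tilde_alpha}. The extension of that lemma to mixed functional intrinsic volumes, which is precisely the content of \cite[Corollary~1.5]{Hug_Mussnig_Ulivelli} (see also \cite[Theorem~5.11]{Hug_Mussnig_Ulivelli}), represents the lower, $j$-th, functional intrinsic volume of $w$ as the non-negative integral of $\tilde\alpha$ against the mixed surface area measure $S(K_w[j],B^{N+1}[N-j],\cdot)$ on $\mathbb{S}^N_{-}$. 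Its vanishing then forces this mixed measure to vanish on the relatively open set $\{\tilde\alpha>0\}\subseteq\mathbb{S}^N_{-}$, and the ``$j$-th forces $N$-th'' propagation for mixed surface area measures --- the content of the cited corollary --- carries this vanishing over to the top surface area measure $S_N(K_w,\cdot)$ on the same set, giving the claim by one final application of Lemma~\ref{le:ozzb_int_sn}. This last propagation step, which converts a vanishing statement for mixed area measures into one for the top area measure, is where the real difficulty sits and is exactly what the cited corollary is designed to provide.
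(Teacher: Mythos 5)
Your proof is correct in outline but takes a genuinely different, and much longer, route than the paper, while ultimately resting on the very same external input. The paper gives no argument at all: it presents the lemma as an immediate consequence of \cite[Corollary 1.5]{Hug_Mussnig_Ulivelli}, which provides the support inclusion $\supp\MAp(u[k],\ind_{B^n}[n-k];\cdot)\subseteq\supp\MAp(u[j],\ind_{B^n}[n-j];\cdot)$ for $1\leq j\leq k\leq n$. Together with the representation \eqref{eq:ozzb_ma} and the non-negativity of $\alpha$, the hypothesis $\oZZb{j}{\alpha}(u)=0$ means that the open set $U=\{x\in\Rn:\alpha(|x|)>0\}$ is disjoint from the support of the $j$-th mixed measure, hence from that of the $k$-th, and so $\oZZb{k}{\alpha}(u)=0$; this is a two-line argument carried out entirely on $\Rn$, with no Cauchy--Kubota formula and no passage to convex bodies. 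Your first step (the Cauchy--Kubota reduction) is valid but gains nothing: the ``top-dimensional'' case you reduce to still requires the full $j$-versus-top propagation, i.e.\ the entire content of the corollary, so the reduction merely replaces $n$ by $k$. In your second step, be careful on two points: (i) you attribute to Corollary 1.5 both the spherical representation of the lower-degree functionals and the support propagation, which are distinct statements; and (ii) the representation of the $j$-th functional over $\s^N_{-}$ does not involve the mixed area measure with reference body $B^{N+1}$ but rather with the flat disk $K^{\ind_{B^N}}=B^N\times\{0\}$ (compare Lemma~\ref{le:int_ku_mixed}), for which the classical description of $\supp S(K[j],B^{N+1}[N-j],\cdot)$ through extreme normal vectors is not available off the shelf --- handling this degenerate reference body is precisely the nontrivial point that the cited corollary settles. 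None of this breaks your argument, since you explicitly delegate the propagation step to the citation, but the detour through $\Grass{k}{n}$ and $\K^{N+1}$ is unnecessary.
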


We can now show that an isoperimetric principle fails for functional intrinsic volumes on $\fconvs$.

\begin{proposition}
\label{prop:iso_counter}
Let $1\leq j<k\leq n$, let $\alpha\in C_c([0,\infty))$, $\alpha\not\equiv 0$, be non-negative. The operator $\oZZb{j}{\alpha}$ does not attain a minimum on
\[
M_{k,\alpha}=\left\{u\in\fconvs \colon \oZZb{k}{\alpha}(u)=1 \right\}.
\]
In fact, $\oZZb{j}{\alpha}(u)>0$ for every $u\in M_{k,\alpha}$ while
\[
\inf \left\{\oZZb{j}{\alpha}(u) : u\in M_{k,\alpha}\right\}=0.
\]
\end{proposition}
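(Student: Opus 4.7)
The plan is to establish two claims separately: first, that $\oZZb{j}{\alpha}$ is strictly positive on $M_{k,\alpha}$, and second, that its infimum over $M_{k,\alpha}$ is zero. These two facts together preclude the existence of a minimizer.

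For the positivity, I would apply Lemma~\ref{le:ozzb_vanish} contrapositively. Any $u \in M_{k,\alpha}$ satisfies $\oZZb{k}{\alpha}(u) = 1 > 0$, and since $j \leq k$, the lemma rules out $\oZZb{j}{\alpha}(u) = 0$; combined with the non-negativity guaranteed by Lemma~\ref{le:ozz_sign}, this forces $\oZZb{j}{\alpha}(u) > 0$.

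For the infimum, my approach is to exploit the interaction between the explicit one-parameter family $u_t$ of \eqref{eq:def_u_t} and the epi-homogeneity $\oZZb{i}{\alpha}(\lambda \sq w) = \lambda^i \oZZb{i}{\alpha}(w)$ of degree $i$. Writing $c_i := \binom{n}{i}\kappa_n/\kappa_{n-i}$, Lemma~\ref{le:ozz_on_u_t} gives $\oZZb{i}{\alpha}(u_t) = c_i \alpha(t)$ for $i \in \{j,k\}$. For any $t$ with $\alpha(t) > 0$, the rescaling $u_t^{\lambda} := \lambda \sq u_t$ with $\lambda = (c_k \alpha(t))^{-1/k}$ then belongs to $M_{k,\alpha}$, and a one-line computation using the homogeneity of degree $j$ yields
$$
\oZZb{j}{\alpha}(u_t^{\lambda}) = \lambda^{j} c_j \alpha(t) = c_j\, c_k^{-j/k}\, \alpha(t)^{(k-j)/k}.
$$
Because $k > j$, the right-hand side tends to $0$ as $\alpha(t) \to 0^+$, so any sequence $t_m$ with $\alpha(t_m)>0$ and $\alpha(t_m)\to 0$ yields a minimizing sequence.

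Hence the only remaining point is the existence of such $t_m$, which is essentially free: since $\alpha \in C_c([0,\infty))$ is non-negative and not identically zero, the open set $\{\alpha > 0\}$ is non-empty and bounded, so it has a boundary point $t_\infty \in [0,\infty)$ at which $\alpha$ vanishes by continuity; any sequence $t_m \to t_\infty$ chosen from inside $\{\alpha > 0\}$ works. I do not anticipate a serious obstacle here; the conceptual content is simply the mismatch between the epi-homogeneity degrees $j$ and $k$, which is exposed by rescaling $u_t$ once $\alpha(t)$ is made small, while the family $u_t$ itself already shows that $\oZZb{j}{\alpha}/\oZZb{k}{\alpha}$ is not bounded away from zero on $\fconvs$ in any scale-invariant sense.
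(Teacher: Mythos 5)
Your proposal is correct and follows essentially the same route as the paper: positivity on $M_{k,\alpha}$ via Lemma~\ref{le:ozzb_vanish} (together with non-negativity from Lemma~\ref{le:ozz_sign}), and a minimizing sequence built from the rescalings $(c_k\alpha(t))^{-1/k}\sq u_t$ using Lemma~\ref{le:ozz_on_u_t} and epi-homogeneity, with $\alpha(t)\to 0^+$ along points where $\alpha>0$. Your normalization constant and the exponent $\alpha(t)^{(k-j)/k}$ match the paper's computation exactly, and your justification for the existence of the sequence $t_m$ is a harmless elaboration of what the paper asserts directly.
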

\begin{proof}
For every $c\geq 0$ and every $t\geq 0$ such that $\alpha(t)\neq 0$, it follows from Lemma~\ref{le:ozz_on_u_t} that
\[
\oZZb{k}{\alpha}\left(\sqrt[k]{\frac{c}{\alpha(t)}} \sq u_t\right) = \frac{c}{\alpha(t)} \oZZb{k}{\alpha}(u_t) = \frac{c}{\alpha(t)}\frac{\kappa_n}{\kappa_{n-k}} \binom{n}{k} \alpha(t).
\]
Thus, for $c_{n,k}=\left( \frac{\kappa_n}{\kappa_{n-k}} \binom{n}{k}\right)^{-1}$ we have
\[
\sqrt[k]{\frac{c_{n,k}}{\alpha(t)}} \sq u_t \in M_{k,\alpha}
\]
for every $t\geq 0$ such that $\alpha(t)\neq 0$. Using Lemma~\ref{le:ozz_on_u_t} again, it follows that
\begin{align*}
\oZZb{j}{\alpha}\left(\sqrt[k]{\frac{c_{n,k}}{\alpha(t)}} \sq u_t \right) &= \left(\frac{c_{n,k}}{\alpha(t)}\right)^{\frac jk} \oZZb{j}{\alpha}(u_t)\\
&=\left(\frac{c_{n,k}}{\alpha(t)} \right)^{\frac jk} \frac{\kappa_n}{\kappa_{n-j}} \binom{n}{j} \alpha(t)\\
&=\frac{c_{n,k}^{\frac jk}\kappa_n}{\kappa_{n-j}} \binom{n}{j} \alpha(t)^{\frac{k-j}{k}}.
\end{align*}
Since $\alpha\in C_c([0,\infty))$ there exists a sequence of non-negative numbers $t_l$, $l\in\N$, such that $\alpha(t_l)\neq 0$ for every $l\in\N$ and $\alpha(t_l)\to 0$ as $l\to \infty$. Together with $k-j>0$ it follows that
$$\oZZb{j}{\alpha}\left(\sqrt[k]{\frac{c_{n,k}}{\alpha(t_l)}} \sq u_t \right) \to 0$$
as $l\to\infty$.

It remains to show that $\oZZb{j}{\alpha}(u)>0$ for every $u\in M_{k,\alpha}$, which is an immediate consequence of the definition of $M_{k,\alpha}$ and Lemma~\ref{le:ozzb_vanish}.
\end{proof}
\begin{remark}
The proof of Proposition~\ref{prop:iso_counter} only uses functions of the type $\lambda \sq u_t$ with $\lambda,t\geq 0$. Thus, we may replace $\fconvs$ in the definition of $M_{k,\alpha}$ with the smaller space $\fconvcd$, which means that an isoperimetric principle for functional intrinsic volumes even fails for convex functions with compact domains.\dssymb
\end{remark}

\section{Wulff-type Inequalities}
\label{se:wulff_type}
\subsection{Wulff Shapes}
Throughout the following exposition, we consider the geometric setting with respect to $(n+1)$-dimensional Euclidean space. For a continuous function $\eta\colon \sN \to (0,\infty)$, the \textit{Wulff shape} associated with $\eta$ is the set
\begin{align}
\begin{split}
\label{eq:def_wulff}
W_\eta&=\{\xi \in \RN: \langle \xi, \nu \rangle \leq \eta(\nu) \text{ for every } \nu \in \sN\}\\
&=\bigcap_{\nu\in\sN} H_{\nu}^-(\eta(\nu)),
\end{split}
\end{align}
where we write $H^-_{\nu}(t)$ for the closed half-space orthogonal to $\nu$ at distance $t$ from the origin such that $H^-_{\nu}(t)$ is unbounded in direction $-\nu$. The naming of this shape is due to the appearance of such a construction in the work by Wulff \cite{wulff_01}. Since $W_\eta$ is the intersection of closed half-spaces, it is easy to see that $W_\eta\in\KN$ and furthermore, since $\eta$ is positive and continuous, that $W_\eta$ contains the origin in its interior. We remark that this body is sometimes also called \textit{Aleksandrov body}.

Wulff's theorem, which is essentially a consequence of the Brunn--Minkowski inequality, states that the Wulff shape associated with $\eta$ is the shape that minimizes the surface integral of $\eta$ (composed with the Gauss map) among all shapes with the same volume. We refer to \cite{Figalli_Zhang} for more recent developments surrounding this result, as well as \cite{schuster_weberndorfer} for further inequalities regarding Wulff shapes. For the following formulation of this isoperimetric-type inequality, we refer to \cite{Busemann_Wulff}. See also, for example, \cite[Theorem 4.2]{Fon} and \cite[Theorem 8.13]{Gruber_book}. 

\begin{theorem}[Wullf's theorem]
\label{thm:wulff}
If $\eta\colon  \sN \to (0,+\infty)$ is continuous, then
\begin{equation}
\label{eq:wulff}
\int_{\sN} \eta(\nu) \d S_n(K) \geq (n+1) V_{n+1}^{n/(n+1)}(K) V_{n+1}^{1/(n+1)}(W_\eta)
\end{equation}
for every $K\in\KN$. If $V_{n+1}(K)>0$, then equality holds if and only if $K$ is homothetic to $W_\eta$.
\end{theorem}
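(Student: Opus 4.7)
The plan is to reduce Wulff's theorem to Minkowski's first inequality (which is itself a consequence of Brunn--Minkowski) via the mixed volume representation of the integral against the surface area measure, using the defining property of $W_\eta$ as an intersection of half-spaces.

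First, I would record the key comparison $h_{W_\eta}(\nu)\leq \eta(\nu)$ for every $\nu\in\sN$. This is immediate from \eqref{eq:def_wulff}: since $W_\eta \subset H^-_\nu(\eta(\nu))$, any $\xi\in W_\eta$ satisfies $\langle\xi,\nu\rangle\leq\eta(\nu)$, so taking the supremum over $\xi\in W_\eta$ gives $h_{W_\eta}(\nu)\leq\eta(\nu)$. Combined with the nonnegativity of the surface area measure $S_n(K,\cdot)$ and the mixed volume identity \eqref{eq:mixed_vol_int_s}, this yields
\[
\int_{\sN}\eta(\nu)\d S_n(K,\nu)\geq \int_{\sN} h_{W_\eta}(\nu)\d S_n(K,\nu) = (n+1)\,V(W_\eta,K,\ldots,K),
\]
where $K$ is repeated $n$ times in the mixed volume.

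Second, I would invoke Minkowski's first inequality
\[
V(W_\eta,K,\ldots,K)^{n+1}\geq V_{n+1}(W_\eta)\,V_{n+1}(K)^{n},
\]
which follows from the Brunn--Minkowski inequality \eqref{eq:bm} by differentiating $t\mapsto V_{n+1}(K+tW_\eta)^{1/(n+1)}$ at $t=0^+$ and using \eqref{eq:first_variation_volume}. Substituting this bound into the previous display yields \eqref{eq:wulff} directly after taking roots.

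For the equality cases when $V_{n+1}(K)>0$, I would argue in two steps. Equality in Minkowski's first inequality forces $K$ to be homothetic to $W_\eta$; write $K=\lambda W_\eta+x_o$ for some $\lambda>0$, $x_o\in\RN$. Equality in the first comparison then requires $\eta=h_{W_\eta}$ on the support of $S_n(K,\cdot)=\lambda^n S_n(W_\eta,\cdot)$. The main subtlety here is verifying that this condition is automatic: at every $\nu\in\sN$ which is an outer unit normal of $W_\eta$ at some boundary point $\xi$, one has $\langle\xi,\nu\rangle = h_{W_\eta}(\nu)$, and because $\xi$ saturates the half-space $H^-_\nu(\eta(\nu))$ in the intersection \eqref{eq:def_wulff}, one must have $\langle\xi,\nu\rangle=\eta(\nu)$ as well; hence $h_{W_\eta}(\nu)=\eta(\nu)$ throughout the support of $S_n(W_\eta,\cdot)$. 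This confirms that $K$ homothetic to $W_\eta$ is indeed the equality case. Conversely, $K=\lambda W_\eta+x_o$ gives equality by a direct computation using $h_K=\lambda h_{W_\eta}+\langle x_o,\cdot\rangle$, translation invariance of $\int\langle x_o,\nu\rangle\d S_n(K,\nu)=0$, and the homogeneity relation $V_{n+1}(\lambda W_\eta)=\lambda^{n+1}V_{n+1}(W_\eta)$. The only genuinely delicate step is the identification of $\eta$ with $h_{W_\eta}$ on the support of the surface area measure, which is the standard ``support condition'' argument underlying the equality case of Wulff's theorem.
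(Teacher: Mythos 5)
The paper does not prove this theorem; it quotes it from the literature (Busemann, Fonseca, Gruber), so your proposal is to be judged on its own. Its skeleton is the standard one: $h_{W_\eta}\leq\eta$ plus \eqref{eq:mixed_vol_int_s} reduces \eqref{eq:wulff} to Minkowski's first inequality, which follows from Brunn--Minkowski, and the equality analysis splits into the two inequalities of the chain. That part is fine. There is, however, a genuine gap in your justification of the ``support condition'' $\eta=h_{W_\eta}$ on $\supp S_n(W_\eta,\cdot)$, which you need both to conclude that homothets of $W_\eta$ actually attain equality and (implicitly) for the normalization \eqref{eq:volume_wulff_shape}.

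Your argument is: if $\nu$ is an outer unit normal of $W_\eta$ at some boundary point $\xi$, then $\xi$ must saturate the half-space $H^-_\nu(\eta(\nu))$, whence $h_{W_\eta}(\nu)=\langle\xi,\nu\rangle=\eta(\nu)$. This is false, and in fact cannot be repaired as a pointwise statement: since $W_\eta$ has nonempty interior, \emph{every} $\nu\in\sN$ is an outer unit normal of $W_\eta$ at some boundary point (any point of the face $F(W_\eta,\nu)$), so your claim would yield $h_{W_\eta}\equiv\eta$ on all of $\sN$, i.e.\ that every positive continuous $\eta$ is a support function --- contradicting, say, an $\eta$ with a sharp non-convex peak. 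Concretely, a boundary point of an infinite intersection of half-spaces need not lie on the boundary of any individual half-space; at a corner point of $W_\eta$ there is a whole set of attained normals $\nu$ for which $h_{W_\eta}(\nu)<\eta(\nu)$ is perfectly possible (take $W$ with a conical vertex and $\eta=h_W+g$ with $g\geq 0$ supported on the normals attained only at that vertex: then $W_\eta=W$ but $\eta>h_{W_\eta}$ there). What saves the proof is that such normals carry no surface area: the correct statement is that $\eta=h_{W_\eta}$ holds $S_n(W_\eta,\cdot)$-almost everywhere, equivalently on $\supp S_n(W_\eta,\cdot)$ by continuity of $\eta$ and $h_{W_\eta}$. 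The clean way to get it is exactly \eqref{eq:volume_wulff_shape} (Schneider, Lemma~7.5.1): since
\[
\int_{\sN}\bigl(\eta(\nu)-h_{W_\eta}(\nu)\bigr)\d S_n(W_\eta,\nu)=(n+1)V_{n+1}(W_\eta)-(n+1)V_{n+1}(W_\eta)=0
\]
and the integrand is nonnegative, it vanishes a.e. But \eqref{eq:volume_wulff_shape} is itself a nontrivial lemma (proved e.g.\ via polytopal approximation or Aleksandrov's variational lemma, cf.\ Lemma~\ref{le:Aleksandrov}), so you should either prove it or cite it explicitly rather than derive the support condition by the incorrect saturation argument. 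With that replacement, the rest of your proof --- including the forward equality direction via the equality case of Minkowski's first inequality, which needs $V_{n+1}(K)>0$ and $V_{n+1}(W_\eta)>0$ (the latter holding since $\eta\geq c>0$ gives $c\,B^{n+1}\subseteq W_\eta$) --- goes through.
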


Note that the Wulff shape associated with $\eta$ satisfies
\begin{equation}
\label{eq:volume_wulff_shape}
V_{n+1}(W_\eta) = \frac{1}{n+1} \int_{\sN} \eta(\nu) \d S_n(W_\eta).
\end{equation}
See, for example, \cite[Lemma 7.5.1]{Schneider_CB}. Thus, considering that $V_{n+1}(W_\eta)>0$, we can rewrite \eqref{eq:wulff} as
\begin{equation}
\label{eq:wulff_alt}
\left( \frac{\int_{\sN} \eta(\nu) \d S_n(K)}{\int_{\sN} \eta(\nu) \d S_n(W_\eta)}  \right)^{n+1} \geq  \left(\frac{V_{n+1}(K)}{V_{n+1}(W_\eta)}\right)^{n} .
\end{equation}
Let us further remark that Wulff's theorem is a generalization of \textit{Minkowski's first inequality} and that the latter is retrieved when we choose $\eta$ to be the support function of a convex body. In particular, if $\eta\equiv 1$, then we retrieve the Euclidean isoperimetric inequality.

\begin{remark}
Wulff shapes were originally considered in a discrete setting, and, in particular, it is straightforward to generalize the construction above to include the polytopal case as well. To do so, one fixes a set $\Omega\subseteq \sN$ that positively spans $\RN$. Wulff shapes are then defined as in \eqref{eq:def_wulff} with the difference that $\sN$ needs to be replaced by $\Omega$ and the remainder of the exposition above needs to be changed accordingly (see, for example, \cite[Section 7.5]{Schneider_CB}). In particular, such a generalization is also possible in the functional setting we describe in the following Section~\ref{suse:bodies_fcts}. However, to avoid overly cumbersome expressions and to keep statements simple, we have refrained from using this approach.\dssymb
\end{remark}

\subsection{From Convex Bodies to Convex Functions}
\label{suse:bodies_fcts}
We want to use Theorem~\ref{thm:wulff} to establish an inequality for convex functions. As pointed out in the alternative proof of Proposition~\ref{prop:BM_conv}, integrating a function with compact support in $\Rn$ against the conjugate Monge--Amp\`ere measure of a function in $\fconvs$ can be rewritten as integrating a function that vanishes in a neighborhood of the equator $\sN_0$ against the surface area measure of a convex body in $\KN$. Thus, in order to obtain non-trivial inequalities for convex functions, we will look at integral operators whose integrands do not have compact domains. For this purpose, we need to consider a smaller set of admissible convex functions and thus consider the space
\begin{align*}
\fconvz=\{ u\colon \Rn\to (-\infty,\infty]\,:\;& u \text{ is convex, proper, } \dom(u) \text{ is compact,}\\
&\max\nolimits_{x \in \dom(u)}u(x)\leq 0 \},
\end{align*}
which is a subset of $\fconvcd$. Let us emphasize that we could have already restricted to $\fconvz$ for the counterexamples we presented in Section~\ref{se:counterexample}. Indeed, functional intrinsic volumes are invariant under vertical translations, and for every $t\geq 0$, we have $u_t-t \in\fconvz$.

The following exposition is a consequence of \cite[Section 3]{Knoerr_Ulivelli}, where a variation of this construction is considered. Let $H\subset \RN$ denote $n$-dimensional linear space spanned by $\{e_1,\ldots,e_n\}$, i.e.\ the first $n$ vectors of the standard orthonormal basis of $\RN$. Equivalently $H=e_{n+1}^\perp$. By $\refl_H\colon \RN\to\RN$ we denote the \textit{orthogonal reflection} at $H$, that is
\[
\refl_H(x,t)=(x,-t)
\]
for $(x,t)\in\Rn\times \R$. To each function $u\in\fconvz$ we associate the convex body
\begin{equation}\label{eq:ref_Ku}
    K^u= \epi(u)\cap \refl_H(\epi(u)),
\end{equation}
where we write $\refl_H(C)=\bigcup_{c\in C} \refl_H(c)$ for $C\subset \RN$. From the properties of $u$ it is clear that $K^u \in \KN$ and that $\refl_H(K^u) = K^u$. Motivated by this, we introduce the notation
$$\KNH = \{K\in\KN : \refl_H(K) = K\}$$
and see that we have thus defined a map
\begin{align}
\begin{split}
\label{eq:ku}
\fconvz &\to \KNH\\
u &\mapsto K^u.
\end{split}
\end{align}
Conversely, to each $K \in \KNH$ we can associate a function
\begin{align}
\label{eq:boundary_map}
\lfloor K \rfloor (x):=
\begin{cases}
\inf_{(x,t)\in K}t \quad &\text{if  }x\in\proj_H K,\\
\infty \quad &\text{else},
\end{cases}
\end{align}
where $\proj_H\colon \RN\to H$ denotes the \textit{orthogonal projection} onto $H$. By definition, the epi-graph of $\lfloor K \rfloor$ is convex. Furthermore, the compactness of $K$ implies that $\dom(\lfloor K \rfloor)$ is compact. Thus, we have $\lfloor K \rfloor \in \fconvcd$. In addition, it follows from the definition of $\KNH$ that
\[
\max\nolimits_{x\in\dom(\lfloor K \rfloor)} \lfloor K \rfloor (x) \leq 0,
\]
and therefore $\lfloor K \rfloor\in\fconvz$. In particular, the map $K\mapsto \lfloor K \rfloor$ is the inverse of $u\mapsto K^u$ and it is straightforward to see that
\begin{equation}
\label{eq:ku_floor}
\lfloor K^u\rfloor = u
\end{equation}
for every $u\in\fconvz$ and conversely
\begin{equation}
\label{eq:kfloork}
K^{\lfloor K \rfloor} = K
\end{equation}
for every $K\in\KNH$. For the functional $\oVb_{n+1}$, defined in \eqref{eq:def_ovb_n+1}, we immediately obtain 
\begin{equation}
\label{eq:oVb_V}
\oVb_{n+1}(u) = \int_{\dom(u)}|u(x)|\d x = \frac 12 V_{n+1}(K^u)
\end{equation}
for every $u\in\fconvz$. Let us also point out that for the convex conjugate of 
$\lfloor K\rfloor$ we have
\begin{equation}
\label{eq:kfloor_conj_hk}
\lfloor K\rfloor^*(x)= h_K(x,-1)
\end{equation}
for $x\in\Rn$.

Lastly, the following consequence of the area formula for Lipschitz functions, which is due to \cite[Corollary 2]{Knoerr_Ulivelli}, connects integrals with respect to $\MAp(u;\cdot)$ (cf.\ \eqref{eq:MAp}) and with respect to $S_n(K^u,\cdot)$. Note that the setting in \cite{Knoerr_Ulivelli} is slightly different, but it is straightforward to check that the result also applies to the setting that is considered here.

\begin{lemma}
\label{le:int_ku}
If $\varphi\colon \Rn \to \R$ is bounded and Borel measurable, then
\[
\int_{\sN_{-}} \tfrac{\varphi(\gnom(\nu))}{\sqrt{1+|\gnom(\nu)|^2}} \d S_n(K^u,\nu) = \int_{\dom(u)} \varphi(\nabla u(x)) \d x
\]
for every $u\in\fconvz$.
\end{lemma}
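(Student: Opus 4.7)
The plan is to unpack both sides of the identity by parametrizing the relevant piece of $\partial K^u$, applying the area formula, and matching it with the integral against Lebesgue measure on $\dom(u)$. Concretely, since $K^u=\epi(u)\cap \refl_H(\epi(u))$ and $u\leq 0$ on its domain, the boundary of $K^u$ splits into three pieces: a ``lower'' piece given by the graph $\{(x,u(x)):x\in\dom(u)\}$, an ``upper'' piece given by its reflection $\{(x,-u(x)):x\in\dom(u)\}$, and a ``vertical'' piece lying over $\bd(\dom(u))\cap\{u=0\}$. At $\hm^n$-a.e.\ point of the lower piece, the outer unit normal to $K^u$ lies in $\sN_-$; at corresponding points of the upper piece it lies in $\sN_+$; and at the vertical piece the outer unit normals lie in $\sN_0$. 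In particular, integration against $S_n(K^u,\cdot)$ restricted to the open half-sphere $\sN_-$ corresponds, via the Gauss map, to integration of the pullback over the lower piece.

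Next I would parametrize the lower boundary by $\Phi\colon\dom(u)\to\RN$, $\Phi(x)=(x,u(x))$. Since $u$ is convex with compact domain, it is locally Lipschitz on $\interior(\dom(u))$, so the area formula applies, and $u$ is differentiable $\hm^n$-a.e.\ on the lower boundary, with
\[
\d\hm^n\big|_{\Phi(\dom(u))} = \sqrt{1+|\nabla u(x)|^2}\,\d x.
\]
At any point of differentiability $x$, the outer unit normal to $\epi(u)$ (hence to $K^u$) is
\[
\nu(x)=\frac{(\nabla u(x),-1)}{\sqrt{1+|\nabla u(x)|^2}}\in\sN_-,
\]
which under the gnomonic projection satisfies $\gnom(\nu(x))=\nabla u(x)$, and whose last coordinate gives $\sqrt{1+|\gnom(\nu(x))|^2}=1/|\nu_{n+1}(x)|=\sqrt{1+|\nabla u(x)|^2}$.

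Now I would pull the left-hand side back through the Gauss map and then through $\Phi$ to obtain
\[
\int_{\sN_-}\frac{\varphi(\gnom(\nu))}{\sqrt{1+|\gnom(\nu)|^2}}\,\d S_n(K^u,\nu)=\int_{\dom(u)}\frac{\varphi(\nabla u(x))}{\sqrt{1+|\nabla u(x)|^2}}\sqrt{1+|\nabla u(x)|^2}\,\d x,
\]
where boundedness of $\varphi$ together with the compactness of $\dom(u)$ ensures integrability and legitimizes the applications of the area formula and the change of variables. The factors $\sqrt{1+|\nabla u(x)|^2}$ then cancel, yielding the desired right-hand side.

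The main technical point, and the only real obstacle, is the careful justification that $S_n(K^u,\cdot)\mres\sN_-$ really equals the push-forward under the Gauss map of $\hm^n$ restricted to the lower boundary minus the vertical piece: one needs to check that the set of boundary points where the outer normal is not uniquely defined carries no $\hm^n$-mass, which follows because convex functions are differentiable $\hm^n$-a.e.\ on the graph, and that the vertical piece contributes only to $\sN_0$, which has $S_n(K^u,\cdot)$-measure finite but is excluded from the domain of integration. Once these measure-theoretic identifications are in place, the computation above is routine.
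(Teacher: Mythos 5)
Your argument is correct and is essentially the proof the paper relies on: the paper does not prove this lemma itself but cites it as ``a consequence of the area formula for Lipschitz functions'' from Knoerr--Ulivelli, and your write-up (parametrize the lower graph by $x\mapsto(x,u(x))$, identify the outer normal $(\nabla u(x),-1)/\sqrt{1+|\nabla u(x)|^2}$ with $\gnom^{-1}(\nabla u(x))$, and cancel the Jacobian $\sqrt{1+|\nabla u(x)|^2}$ against the denominator) is exactly that argument made explicit, with the measure-theoretic null-set issues correctly identified. One small inaccuracy: the vertical wall of $\bd K^u$ sits over the points of $\bd(\dom(u))$ where $u<0$ (there the fiber is the nondegenerate segment $\{x\}\times[u(x),-u(x)]$), not over $\bd(\dom(u))\cap\{u=0\}$, where the fiber degenerates to a point; this does not affect the conclusion, since either way that part of the boundary contributes only normals in $\sN_0$ up to an $\hm^n$-null set.
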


We will now extend Lemma~\ref{le:int_ku} to functions defined on the whole sphere $\sN$ or equivalently to include boundary terms in the analytic setting. For this purpose, recall that the recession function $\rho_{\varphi}$ of $\varphi\in C(\Rn)$ is the $1$-homogenous extension of $z\mapsto \lim_{t\to\infty} \varphi(tz)/t$ (see \eqref{eq:rho_varphi}) and the $\Cr$ denotes the set of all $\varphi\in C(\Rn)$ that have a continuous recession function such that $|\rho_\varphi-\varphi|$ is bounded on $\Rn$ (see \eqref{eq:def_Cr}). For $\varphi\in\Cr$ we now consider the functional $\oZ_\varphi\colon \fconvz\to\R$, given by
\begin{equation}
\label{eq:oz_varphi}
\oZ_\varphi(u)=\int_{\dom(u)} \varphi(\nabla u(x))\d x + \int_{\bd(\dom(u))} |u(x)| \rho_{\varphi}(\nu_{\dom(u)}(x))\d\hm^{n-1}(x)
\end{equation}
for $u\in\fconvz$ such that $\dim(\dom(u))=n$. Here, $\bd(\dom(u))$ denotes the boundary of $\dom(u)$ and $\nu_{\dom(u)}(x)\in\sn$ is the outer unit normal to $\dom(u)$ at $x\in \bd(\dom(u))$, which is well-defined $\hm^{n-1}$-a.e.\ on $\bd(\dom(u))$. Note that our conditions on $\varphi$, together with the fact that elements of $\fconvz$ have compact domain, guarantee that \eqref{eq:oz_varphi} is finite. Furthermore, observe that the second integral on the right side of \eqref{eq:oz_varphi} is, in fact, an integral on $\sn$ with respect to the measure $\Sa(u;\cdot)$, which we define as follows: if $\dim(\dom(u))=n$, then $\Sa(u;\cdot)$ is the push-forward of $\nu_{\dom(u)}$ under $|u(x)|\d\hm^{n-1}(x)$ restricted to $\bd(\dom(u))$. That is
\begin{equation}
\label{eq:s_pushforward}
\d \Sa(u;\cdot)=(\nu_{\dom(u)})_{\sharp}\left(|u| \d \hm^{n-1}\Big\vert_{\bd(\dom(u))} \right).
\end{equation}
If $\dim(\dom(u))<n$, then $\dom(u)$ is contained in a hyperplane with outer unit normal $z_u\in \sn$. In this case, the measure $\Sa(u;\cdot)$ is a discrete measure concentrated on $\pm z_u$ such that
\begin{equation}
\label{eq:sa_lower_dim}
\Sa(u;\{z_u\})=\Sa(u;\{-z_u\}) = \int_{\bd(\dom(u))}|u(x)| \d\hm^{n-1}(x).
\end{equation}
Another way of looking at the measure we just defined is the fact that 
\begin{equation}
\label{eq:sa_sn}
\Sa(u;\omega)=\frac 12 S_n(K^u,\omega)\Big\vert_{\sN_0},
\end{equation}
for every $u\in\fconvz$ and Borel set $\omega \subset \sN_0$. That is, $\Sa(u;\cdot)$ is the restriction of the usual surface area measure of $K^u$ to the equator $\sN_0$, which we identify with $\sn$. Lastly, let us remark that similar measures appear, for example, in \cite{Colesanti_Fragala,Rotem_JFA_22,Rotem_23,Ulivelli_Wulff}.

To summarize, we rewrite \eqref{eq:oz_varphi} in the form
\begin{equation}
\label{eq:oz_varphi_measures}
\oZ_\varphi(u)=\int_{\Rn} \varphi(y)\d\MAp(u;y) + \int_{\sn} \rho_{\varphi}(z) \d \Sa(u;z).
\end{equation}
As noted in the introduction, we want to emphasize that $\oZ_\varphi$ generalizes the functional intrinsic volume $\oZZb{n}{\alpha}$. Indeed, if $\varphi(x)=\alpha(|x|)$ with some $\alpha\in C_c([0,\infty))$, then $\rho_{\varphi}\equiv 0$ and we have
\[
\oZ_\varphi(u)= \int_{\Rn} \alpha(|y|) \d\MAp(u;y)=\oZZb{n}{\alpha}(u)
\]
for $u\in\fconvz$.

Considering that for $u\in\fconvz$ the associated body $K^u$ is symmetric with respect to the hyperplane $H$, we immediately obtain the following consequence of Lemma~\ref{le:int_ku}, \eqref{eq:sa_sn} and \eqref{eq:oz_varphi_measures}.

\begin{lemma}
\label{le:varphi_tilde}
Let $\varphi\in \Cr$ and set
\begin{equation}
\label{eq:varphi_tilde}
\tilde{\varphi}(\nu)=
\begin{cases}
    \frac{\varphi(\gnom(\nu))}{\sqrt{1+|\gnom(\nu)|^2}}\quad & \text{if } \nu_{n+1} <0,\\[1.4ex]
    \rho_\varphi((\nu_1,\ldots,\nu_n)) \quad &\text{if } \nu_{n+1} =0,\\[1.4ex]
    \frac{\varphi(\gnom(-\nu))}{\sqrt{1+|\gnom(-\nu)|^2}} \quad &\text{if } \nu_{n+1} >0,
\end{cases}
\end{equation}
for $\nu=(\nu_1,\ldots,\nu_{n+1})\in\sN$. If $u\in\fconvz$, then
\[
\oZ_\varphi(u) = \frac 12 \int_{\sN} \tilde{\varphi}(\nu) \d S_n(K^u,\nu).
\]
\end{lemma}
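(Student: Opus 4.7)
The plan is to decompose $\sN$ into the three natural pieces $\sN_-$, $\sN_0$, $\sN_+$ and to show that each contributes exactly one of the terms appearing in the representation \eqref{eq:oz_varphi_measures} of $\oZ_\varphi(u)$.

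First I would exploit the reflection symmetry $\refl_H(K^u) = K^u$ that is built into the definition of $K^u$. By the characterization of $S_n(K^u, \cdot)$ via the Gauss map, this gives the invariance $S_n(K^u, \refl_H(\omega)) = S_n(K^u, \omega)$ for every Borel set $\omega \subset \sN$. Directly from the piecewise definition \eqref{eq:varphi_tilde}, $\tilde{\varphi} \circ \refl_H = \tilde{\varphi}$, so a change of variables yields
\[
\int_{\sN_+} \tilde{\varphi}(\nu) \d S_n(K^u, \nu) = \int_{\sN_-} \tilde{\varphi}(\nu) \d S_n(K^u, \nu).
\]
Hence the integral of $\tilde{\varphi}$ over $\sN$ equals twice the contribution of $\sN_-$ plus the contribution of $\sN_0$.

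For the equatorial piece, the identification of $\sN_0$ with $\sn$ turns $\tilde{\varphi}\vert_{\sN_0}$ into $\rho_\varphi$, and \eqref{eq:sa_sn} immediately gives
\[
\tfrac{1}{2}\int_{\sN_0}\tilde{\varphi}(\nu) \d S_n(K^u, \nu) = \int_{\sn} \rho_\varphi(z) \d \Sa(u;z).
\]
For the lower half-sphere I would apply Lemma~\ref{le:int_ku} to $\varphi$. Combined with the push-forward description \eqref{eq:MAp} of $\MAp(u; \cdot)$, this produces
\[
\int_{\sN_-} \tilde{\varphi}(\nu) \d S_n(K^u, \nu) = \int_{\dom(u)} \varphi(\nabla u(x)) \d x = \int_{\Rn} \varphi(y) \d \MAp(u; y).
\]
Summing these three contributions and comparing with \eqref{eq:oz_varphi_measures} completes the proof.

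The only technical subtlety I anticipate is that Lemma~\ref{le:int_ku} is stated for bounded Borel $\varphi$, whereas elements of $\Cr$ only satisfy that $|\varphi - \rho_\varphi|$ is bounded and $\rho_\varphi$ is $1$-homogeneous. However, on $\sN_-$ the weight $1/\sqrt{1+|\gnom(\nu)|^2} = |\nu_{n+1}|$ exactly compensates the linear growth of $\rho_\varphi \circ \gnom$, so $\tilde{\varphi}$ is in fact bounded on all of $\sN$. Truncating $\varphi$ to obtain bounded approximants $\varphi_k$, applying Lemma~\ref{le:int_ku} to each $\varphi_k$, and passing to the limit by dominated convergence (using the finiteness of all quantities involved, as noted right after \eqref{eq:oz_varphi}) extends Lemma~\ref{le:int_ku} to arbitrary $\varphi \in \Cr$ and justifies the step above.
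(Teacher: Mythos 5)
Your decomposition of $\sN$ into $\sN_-$, $\sN_0$, $\sN_+$, the use of the $\refl_H$-symmetry of $K^u$ to fold the upper hemisphere onto the lower one, the identification of the equatorial contribution via \eqref{eq:sa_sn}, and Lemma~\ref{le:int_ku} on $\sN_-$ is exactly the argument the paper has in mind: the lemma is stated there as an ``immediate consequence'' of precisely these ingredients, and your truncation/dominated-convergence step correctly fills in the extension of Lemma~\ref{le:int_ku} from bounded integrands to $\varphi\in\Cr$, which the paper glosses over (the domination on $\sN_-$ by a constant, via $|\varphi|\leq|\rho_\varphi|+C$ and $1$-homogeneity of $\rho_\varphi$, and on $\dom(u)$ by $C'|\nabla u|+C$, which is integrable since the graph of $u$ has finite area, are both sound).

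One point deserves more care than your phrase ``directly from the piecewise definition.'' For $\nu\in\sN_+$ one has $\gnom(-\nu)=-\gnom(\refl_H(\nu))$, so \eqref{eq:varphi_tilde} as printed makes $\tilde{\varphi}$ antipodally symmetric, $\tilde{\varphi}(\nu)=\tilde{\varphi}(-\nu)$, and the identity $\tilde{\varphi}\circ\refl_H=\tilde{\varphi}$ that your change of variables requires would then force $\varphi(y)=\varphi(-y)$. What the argument (and Proposition~\ref{prop:varphi_u_conj}, where $\tilde{\varphi}$ must equal the $\refl_H$-invariant, generally not origin-symmetric, function $h_{K^u}$) actually needs is the $\refl_H$-symmetric extension, i.e.\ the third case of \eqref{eq:varphi_tilde} should be read with $\gnom(\refl_H(\nu))=-\gnom(-\nu)$ in place of $\gnom(-\nu)$; a one-dimensional example such as $u=x-1+\ind_{[0,1]}$ with non-even $\varphi$ shows the two extensions genuinely differ. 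This is a sign slip in the paper's formula rather than a defect of your strategy, but since the reflection identity is the hinge of the whole proof, it should be verified against the definition rather than asserted.
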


To get a better understanding of \eqref{eq:varphi_tilde}, we consider the special case when $\varphi=u^*$ with $u\in\fconvz$ (see also \cite[Theorem 13.3]{Rockafellar}).

\begin{proposition}
\label{prop:varphi_u_conj}
If $u\in\fconvz$, then $u^*\in \Cr$ with $\rho_{u^*}=h_{\dom(u)}$. Furthermore, if $\varphi=u^*$ and $\tilde{\varphi}$ is as in \eqref{eq:varphi_tilde}, then
\[
\tilde{\varphi}=h_{K^u}
\]
and
\[
\frac{1}{n+1} \oZ_{\varphi}(u) = \oVb_{n+1}(u).
\]
\end{proposition}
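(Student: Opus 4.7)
The plan is to dispatch the three claims of the proposition in order, each time reducing to either a direct computation on $K^u$ or a known identity for convex bodies.

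For the first claim, I would exploit the hypothesis $u\le 0$ on a compact domain to sandwich $u^*$ between $h_{\dom(u)}$ and $h_{\dom(u)}+C$, where $C=-\min_{x\in\dom(u)} u(x)\in[0,\infty)$ is finite by lower semicontinuity of $u$ on a compact set. Explicitly, since $0\le -u(x)\le C$ on $\dom(u)$,
\[
h_{\dom(u)}(y)\le u^*(y)=\sup\nolimits_{x\in\dom(u)}\bigl(\langle x,y\rangle - u(x)\bigr)\le h_{\dom(u)}(y)+C
\]
for every $y\in\Rn$, which shows $|u^*-h_{\dom(u)}|$ is bounded. Dividing by $t>0$ and taking $t\to\infty$ in $u^*(tz)$, the $1$-homogeneity of $h_{\dom(u)}$ forces $\rho_{u^*}(z)=h_{\dom(u)}(z)$ on $\sn$, hence on all of $\Rn$ after $1$-homogeneous extension. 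Continuity of $\rho_{u^*}$ is then automatic (support functions are continuous), so $u^*\in\Cr$.

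For the second claim, I would compute $h_{K^u}(\nu)=\sup_{(x,t)\in K^u}\bigl(\langle x,\bar\nu\rangle + t\nu_{n+1}\bigr)$ directly from the description $K^u=\{(x,t):x\in\dom(u),\ u(x)\le t\le -u(x)\}$, splitting cases according to the sign of $\nu_{n+1}$. For $\nu_{n+1}<0$, the optimal $t$ is $u(x)$, and after factoring out $|\nu_{n+1}|$ one recognises $|\nu_{n+1}|\,u^*(\bar\nu/|\nu_{n+1}|) = u^*(\gnom(\nu))/\sqrt{1+|\gnom(\nu)|^2}=\tilde\varphi(\nu)$, using $\sqrt{1+|\gnom(\nu)|^2}=1/|\nu_{n+1}|$. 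On the equator $\nu_{n+1}=0$ the $t$-coordinate is irrelevant, so $h_{K^u}(\nu)=h_{\dom(u)}(\bar\nu)=\rho_{u^*}(\bar\nu)=\tilde\varphi(\nu)$ by Part~1 and the equatorial case of \eqref{eq:varphi_tilde}. For $\nu_{n+1}>0$ I would invoke the $\refl_H$-symmetry of $K^u$ (which yields $h_{K^u}(\nu)=h_{K^u}(\refl_H\nu)$) to reduce to the lower-hemisphere case already handled.

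For the third claim, Lemma~\ref{le:varphi_tilde} combined with $\tilde\varphi=h_{K^u}$ from Part~2 yields
\[
\oZ_{u^*}(u)=\tfrac12\int_{\sN} h_{K^u}(\nu)\d S_n(K^u,\nu)=\tfrac{n+1}{2}\,V_{n+1}(K^u),
\]
where the second equality is the classical Minkowski identity $\int_{\sN}h_K\d S_n(K)=(n+1)V_{n+1}(K)$, i.e.\ the case $K=L$ of \eqref{eq:first_variation_volume}. Combining with \eqref{eq:oVb_V}, which gives $V_{n+1}(K^u)=2\oVb_{n+1}(u)$, one concludes $\tfrac{1}{n+1}\oZ_{u^*}(u)=\oVb_{n+1}(u)$.

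The main subtlety I anticipate is verifying the pointwise identity $\tilde\varphi=h_{K^u}$ on the upper hemisphere, where the formula \eqref{eq:varphi_tilde} is written in terms of $\gnom(-\nu)$; reconciling this with the direct computation of $h_{K^u}(\nu)$ for $\nu_{n+1}>0$ hinges on the $\refl_H$-symmetry of $K^u$, equivalently $h_{K^u}\circ\refl_H=h_{K^u}$. Everything else is a routine unfolding of the definitions in Sections~\ref{se:convex_bodies} and~\ref{suse:bodies_fcts}.
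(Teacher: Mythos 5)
Your proposal is correct and follows essentially the same route as the paper: the sandwich $h_{\dom(u)}\le u^*\le h_{\dom(u)}+C$ for the first claim, the identification of $\tilde\varphi$ with $h_{K^u}$ via the gnomonic relation $\sqrt{1+|\gnom(\nu)|^2}=1/|\nu_{n+1}|$ (the paper cites \eqref{eq:kfloor_conj_hk} where you inline the supremum computation over $K^u$, a cosmetic difference), and the Minkowski identity $\int_{\sN}h_{K^u}\d S_n(K^u)=(n+1)V_{n+1}(K^u)$ combined with \eqref{eq:oVb_V} for the last claim.
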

\begin{proof}
If $u\in\fconvz$, then there exists some $C\geq 0$ such that
\[
\ind_{\dom(u)}-C \leq u \leq \ind_{\dom(u)},
\]
and therefore
\[
h_{\dom(u)} \leq u^* \leq h_{\dom(u)} + C
\]
on $\Rn$. Thus,
\[
\rho_{u^*}=h_{\dom(u)}
\]
and, in particular, $u^*\in \Cr$. Now by \eqref{eq:boundary_map} and \eqref{eq:ku_floor} we have
\[
\dom(u)=\proj_H K^u,
\]
where $H=e_{n+1}^\perp$. Thus,
\[
\rho_{u^*}=h_{\dom(u)} = h_{\proj_H K^u} = h_{K^u}\big\vert_H.
\]
Furthermore, since $\sqrt{1+|\gnom(\nu)|^2} = 1/|\nu_{n+1}|$ for $\nu\in\sN_{-}$, it follows from \eqref{eq:ku_floor} and \eqref{eq:kfloor_conj_hk} that
\[
\frac{u^*(\gnom(\nu))}{\sqrt{1+|\gnom(\nu)|^2}} = |\nu_{n+1}|\, h_{K^u}(\gnom(\nu),-1) = h_{K^u}(\nu)
\]
for $\nu\in\sN_{-}$. Thus, if $\varphi=u^*$, then $\tilde{\varphi} = h_{K^u}$. Lemma~\ref{le:varphi_tilde} together with \eqref{eq:mixed_vol_int_s} and \eqref{eq:oVb_V} now shows
\[
\oZ_{\varphi}(u) = \frac 12 \int_{\sN} h_{K^u}(\nu) \d S_n(K^u,\nu) = \frac{n+1}{2} V_{n+1}(K^u) = (n+1)\,\oVb_{n+1}(u).
\]
\end{proof}

Let $\varphi\in\Cr$ and let $\tilde{\varphi}\colon \sN \to \R$ be as in \eqref{eq:varphi_tilde}. For the proof of Theorem~\ref{thm:wulff_ineq_fconvz}, we want to find sufficient conditions such that the Wulff shape $W_{\tilde{\varphi}}$ is well-defined. Furthermore, we want to establish a simple description of $\lfloor W_{\tilde{\varphi}}\rfloor$ in terms of $\varphi$. We start with a simple lemma.

\begin{lemma}
\label{le:epi_psi_conj}
    For every function $\psi\colon \R^n \to \R$ one has 
    \[
        \epi(\psi^*)=\bigcap_{\nu \in \s^n_-} H_\nu^- \left(\frac{\psi(\gnom(\nu))}{\sqrt{1+|\gnom(\nu)|^2}}\right).
    \]
\end{lemma}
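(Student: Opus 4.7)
The plan is to verify the identity by unpacking both sides as sets of points $(y,s) \in \R^{n+1}$ and matching the inequalities defining them, using the gnomonic parametrization of $\sN_{-}$ by $\Rn$.

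First I would recall that by the definition of the convex conjugate,
\[
(y,s)\in\epi(\psi^*)\iff \psi^*(y)\leq s \iff \langle x,y\rangle - \psi(x)\leq s \text{ for every } x\in\Rn,
\]
so $\epi(\psi^*)$ is the set of points $(y,s)$ satisfying $\langle x,y\rangle - s\leq \psi(x)$ simultaneously for all $x\in\Rn$. The right-hand side of the claimed identity is, by the definition of the half-space $H_\nu^-(t)$, the set of $(y,s)$ with $\langle (y,s),\nu\rangle\leq \psi(\gnom(\nu))/\sqrt{1+|\gnom(\nu)|^2}$ for every $\nu\in\sN_{-}$. So the proof reduces to showing that these two families of affine half-spaces coincide.

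Next, I would parametrize: for $\nu=(\nu_1,\ldots,\nu_{n+1})\in\sN_{-}$, set $x=\gnom(\nu)=(\nu_1,\ldots,\nu_n)/|\nu_{n+1}|$, so that $(\nu_1,\ldots,\nu_n)=|\nu_{n+1}|\,x$ and $\nu_{n+1}=-|\nu_{n+1}|$. Using the identity $\sqrt{1+|\gnom(\nu)|^2}=1/|\nu_{n+1}|$ recalled in Section~\ref{se:preliminaries}, one computes
\[
\langle (y,s),\nu\rangle = |\nu_{n+1}|\,\langle x,y\rangle + \nu_{n+1}\,s = |\nu_{n+1}|\bigl(\langle x,y\rangle - s\bigr),
\]
while the right-hand upper bound becomes $\psi(\gnom(\nu))/\sqrt{1+|\gnom(\nu)|^2}=|\nu_{n+1}|\,\psi(x)$. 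Cancelling the positive factor $|\nu_{n+1}|$, the half-space inequality $\langle (y,s),\nu\rangle\leq \psi(\gnom(\nu))/\sqrt{1+|\gnom(\nu)|^2}$ becomes exactly $\langle x,y\rangle - s\leq \psi(x)$.

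Finally, I would invoke the fact that the gnomonic projection $\gnom\colon\sN_{-}\to\Rn$ is a bijection, so as $\nu$ ranges over $\sN_{-}$, the point $x=\gnom(\nu)$ ranges over all of $\Rn$. Consequently, the intersection $\bigcap_{\nu\in\sN_{-}}H_\nu^-\bigl(\psi(\gnom(\nu))/\sqrt{1+|\gnom(\nu)|^2}\bigr)$ is precisely the set of $(y,s)$ with $\langle x,y\rangle - s\leq \psi(x)$ for every $x\in\Rn$, which equals $\epi(\psi^*)$. There is no real obstacle here; the only item to be careful about is tracking the sign $\nu_{n+1}=-|\nu_{n+1}|$ when expanding $\langle(y,s),\nu\rangle$, and the fact that $\gnom$ bijects $\sN_{-}$ onto $\Rn$ (so that no constraints on $x$ remain after re-parametrization).
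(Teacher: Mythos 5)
Your proof is correct and follows essentially the same route as the paper: both arguments identify the epigraph of $\psi^*$ with the intersection over $x\in\Rn$ of the half-spaces $\{(y,s):\langle x,y\rangle-s\leq\psi(x)\}$, recognize each such half-space as $H^-_{\gnom^{-1}(x)}\bigl(\psi(x)/\sqrt{1+|x|^2}\bigr)$ via the identity $\sqrt{1+|\gnom(\nu)|^2}=1/|\nu_{n+1}|$, and conclude by the bijectivity of $\gnom$. The paper merely packages the first step as ``$\psi^*$ is the supremum of the affine functions $\ell_x(y)=\langle x,y\rangle-\psi(x)$, hence $\epi(\psi^*)=\bigcap_x\epi(\ell_x)$,'' which is the same computation you carry out by unpacking the inequalities directly.
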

\begin{proof}
    First, notice that by definition \eqref{eq:def_conv_conj}, the function $\psi^*$ can be written as the pointwise supremum of the family of affine functions $\ell_y(x)=\langle x, y \rangle - \psi(y)$ with $y \in \Rn$. Therefore,
    \[
    \epi(\psi^*)=\bigcap_{y \in \Rn} \epi(\ell_y).
    \]
    Now for $y\in\Rn$ we have
    \[
    \epi(\ell_y) = \{(x,t)\in\Rn\times\R : \langle (x,t),(y,-1)\rangle\leq \psi(y)\},    
    \]
    which is a closed half-space in $\R^{n+1}$ with outer unit normal
    \[
    \gnom^{-1}(y)=\frac{(y,-1)}{\sqrt{1+|y|^2}}\in\sN_-.
    \]
    Thus,
    \[
    \epi(\ell_y)=\Big\{\xi\in\R^{n+1} : \langle \xi,\gnom^{-1}(y)\rangle \leq \tfrac{\psi(y)}{\sqrt{1+|y|^2}}\Big\} = H_{\gnom^{-1}(y)}^-\Big(\tfrac{\psi(y)}{\sqrt{1+|y|^2}}\Big).
    \]
    Together with the fact that $\gnom\colon \sN_-\to\Rn$ is a bijection, this concludes the proof.
\end{proof}

We need the following consequence of \cite[Proposition 3.5]{Fon}, where we write $\interior(A)$ for the \textit{interior} of $A\subset \Rn$.
\begin{lemma}
\label{le:fon_prop_3.5}
Let $\rho\colon \Rn \to\R$ be a $1$-homogeneous continuous function. If $\rho(x)\geq c |x|$ for every $x\in\Rn$ and some $c>0$, then $o\in\interior(W_{\rho})$ and $\rho^*=\ind_{W_{\rho}}$, where $W_{\rho}$ denotes the Wulff shape associated with the positive function $\rho\vert_{\sn}$.
\end{lemma}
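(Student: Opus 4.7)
The plan is to establish the two claims separately, using only the $1$-homogeneity of $\rho$ and the lower bound $\rho(x)\geq c|x|$.

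For the inclusion $o\in\interior(W_\rho)$, note that $\rho\vert_{\sn}\geq c>0$ by hypothesis, so the Wulff shape
\[
W_\rho=\bigcap_{\nu\in\sn}\{x\in\Rn:\langle x,\nu\rangle\leq\rho(\nu)\}
\]
satisfies the pointwise estimate: if $|x|<c$, then $\langle x,\nu\rangle\leq|x|<c\leq\rho(\nu)$ for every $\nu\in\sn$. Hence the open ball of radius $c$ around the origin lies in $W_\rho$, which gives $o\in\interior(W_\rho)$.

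For the identity $\rho^*=\ind_{W_\rho}$, I would first argue that $\rho^*$ takes only the values $0$ and $+\infty$. Indeed, from $\rho(0)=0$ (a consequence of $1$-homogeneity) we have $\rho^*(y)\geq\langle 0,y\rangle-\rho(0)=0$ for every $y\in\Rn$. On the other hand, fixing $y\in\Rn$ and replacing $x$ by $\lambda x$ for $\lambda>0$ in the defining supremum,
\[
\rho^*(y)=\sup_{x\in\Rn}\bigl(\langle x,y\rangle-\rho(x)\bigr)=\sup_{x\in\Rn}\lambda\bigl(\langle x,y\rangle-\rho(x)\bigr),
\]
so letting $\lambda\to\infty$ forces $\rho^*(y)=+\infty$ as soon as the supremum is strictly positive, while otherwise the supremum equals $0$ (attained at $x=0$). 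Thus $\rho^*=\ind_A$ where
\[
A=\{y\in\Rn:\langle x,y\rangle\leq\rho(x)\text{ for every }x\in\Rn\}.
\]

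It then remains to identify $A$ with $W_\rho$. By definition $W_\rho\supseteq A$ (test only on the sphere), while conversely, if $y\in W_\rho$ and $x\in\Rn\setminus\{o\}$, writing $x=|x|\nu$ with $\nu=x/|x|\in\sn$ and using $1$-homogeneity of $\rho$ gives
\[
\langle x,y\rangle=|x|\langle\nu,y\rangle\leq|x|\rho(\nu)=\rho(x),
\]
which together with the trivial inequality $\langle o,y\rangle\leq\rho(o)=0$ shows $y\in A$. Hence $A=W_\rho$ and $\rho^*=\ind_{W_\rho}$. The argument is elementary throughout; the only subtle point is the dichotomy for $\rho^*$, which is the standard effect of conjugating a positively $1$-homogeneous function and will likely be the step to state carefully rather than an actual obstacle.
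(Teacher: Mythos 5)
Your argument is correct and complete. Note, though, that the paper does not actually prove this lemma: it is stated as a consequence of Fonseca's Proposition 3.5 and taken from the literature, so what you have written is a self-contained elementary replacement for that citation rather than a variant of an in-paper proof. The substance of your argument is the standard fact that the convex conjugate of a positively $1$-homogeneous function $\rho$ is the indicator of the set $A=\{y:\langle x,y\rangle\leq\rho(x)\ \forall x\}$, combined with the observation that testing against all of $\Rn$ is equivalent to testing against $\sn$ by homogeneity; both halves are handled correctly, and the ball inclusion $B(o,c)\subseteq W_\rho$ giving $o\in\interior(W_\rho)$ is exactly right.

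Two small points worth stating explicitly if you write this up. First, the hypothesis ``$1$-homogeneous'' must be read as \emph{positively} $1$-homogeneous (as the paper's convention in \eqref{eq:rho_varphi} indicates), since genuine odd homogeneity would contradict $\rho(x)\geq c|x|>0$ for $x\neq o$; your dichotomy for $\rho^*$ uses only $\lambda>0$, so this is consistent, but it should be said. Second, in the chain $\sup_x(\langle x,y\rangle-\rho(x))=\sup_x\lambda(\langle x,y\rangle-\rho(x))$ you are implicitly reparametrizing $x\mapsto\lambda x$; the cleaner statement is that the supremum $s(y)$ satisfies $s(y)=\lambda\, s(y)$ for every $\lambda>0$, which together with $s(y)\geq 0$ (from $x=o$) forces $s(y)\in\{0,+\infty\}$. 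These are presentational refinements, not gaps.
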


\begin{lemma}
    \label{le:wulff_varphi_conj}
    Let $\varphi\in \Cr$ and let $\tilde{\varphi}$ be as in \eqref{eq:varphi_tilde}. If $\varphi$ is such that 
    \begin{equation}
    \label{eq:lemma_ineq}
    c \leq \frac{\varphi(x)}{\sqrt{1+|x|^2}}
    \end{equation}
    for every $x\in\Rn$ and some $c>0$, then $W_{\tilde{\varphi}}$ is well-defined, has positive volume, and
    \[
    \lfloor W_{\tilde{\varphi}}\rfloor=\varphi^*+\ind_{\{\varphi^*\leq 0\}}.
    \]
    In addition, if
    \begin{equation}
    \label{eq:lemma_ineq2}
    \rho_\varphi \leq \varphi
    \end{equation}
    on $\Rn$, then
    \[
    \lfloor W_{\tilde{\varphi}}\rfloor=\varphi^*
    \]
    and $\dom(\varphi^*)=W_{\rho_{\varphi}}$.
\end{lemma}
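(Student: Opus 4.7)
The plan is to identify $W_{\tilde\varphi}$ explicitly as an intersection of two reflected epigraphs of $\varphi^*$, and then read off $\lfloor W_{\tilde\varphi}\rfloor$ from the definition of the boundary map.

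First I would confirm that $\tilde\varphi$ is a positive continuous function on $\sN$, which is the requirement for $W_{\tilde\varphi}$ to be a well-defined convex body. Positivity on each open hemisphere follows directly from the hypothesis \eqref{eq:lemma_ineq}, and by sending $|x|\to\infty$ one also obtains $\rho_\varphi\geq c$ on $\sn$, so $\tilde\varphi\geq c$ on all of $\sN$. Continuity on $\sN_\pm$ is clear from the continuity of $\varphi$ and $\gnom$; continuity at the equator is the delicate point, and I would handle it using the bound $|\varphi-\rho_\varphi|\leq M$ encoded in $\varphi\in\Cr$. Together with the $1$-homogeneity of $\rho_\varphi$, this yields the uniform estimate $|\varphi(tz)/t-\rho_\varphi(z)|\leq M/t$ in $z\in\sn$, so the limit of $\tilde\varphi(\nu)$ from either hemisphere at a point $(\bar\nu_0,0)\in\sN_0$ equals $\rho_\varphi(\bar\nu_0)$, matching the value prescribed by \eqref{eq:varphi_tilde}. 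From $\tilde\varphi\geq c$ one obtains $c B^{n+1}\subseteq W_{\tilde\varphi}$, so $W_{\tilde\varphi}$ has positive $(n+1)$-dimensional volume.

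Next, Lemma~\ref{le:epi_psi_conj} shows that the half-space constraints coming from $\sN_-$ cut out precisely $\epi(\varphi^*)$. The reflection symmetry of $\tilde\varphi$ built into its definition implies that the constraints from $\sN_+$ cut out $\refl_H(\epi(\varphi^*))=\{(x,t): t\leq -\varphi^*(x)\}$. The equator contributes $\langle x,z\rangle\leq \rho_\varphi(z)$ for $z\in\sn$, i.e.\ $x\in W_{\rho_\varphi}$, but this is redundant since $\dom(\varphi^*)\subseteq W_{\rho_\varphi}$. Indeed, Lemma~\ref{le:fon_prop_3.5} gives $\rho_\varphi^*=\ind_{W_{\rho_\varphi}}$, and the bound $|\varphi-\rho_\varphi|\leq M$ passes to $|\varphi^*-\rho_\varphi^*|\leq M$ after taking conjugates, so $\varphi^*$ is finite exactly on $W_{\rho_\varphi}$; this also settles the identity $\dom(\varphi^*)=W_{\rho_\varphi}$ claimed in the statement. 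Therefore $W_{\tilde\varphi}=\epi(\varphi^*)\cap\refl_H(\epi(\varphi^*))$, and its vertical section above $x$ is the interval $[\varphi^*(x),-\varphi^*(x)]$, which is nonempty exactly when $\varphi^*(x)\leq 0$. Taking the infimum over the section yields $\lfloor W_{\tilde\varphi}\rfloor=\varphi^*+\ind_{\{\varphi^*\leq 0\}}$.

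For the final assertion, the extra hypothesis \eqref{eq:lemma_ineq2} forces $\varphi^*\leq \rho_\varphi^*=\ind_{W_{\rho_\varphi}}$ by the order-reversing property of convex conjugation, so $\varphi^*\leq 0$ throughout its domain; the indicator $\ind_{\{\varphi^*\leq 0\}}$ becomes redundant and $\lfloor W_{\tilde\varphi}\rfloor=\varphi^*$. The principal obstacle in the argument is keeping the two reflections straight while pinning down $W_{\tilde\varphi}$; Lemma~\ref{le:epi_psi_conj} is the key tool that makes this transparent, reducing the rest to a routine half-space calculation.
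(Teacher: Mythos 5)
Your proposal is correct and follows essentially the same route as the paper: both identify $W_{\tilde{\varphi}}$ as $\epi(\varphi^*)\cap\refl_H(\epi(\varphi^*))$ via Lemma~\ref{le:epi_psi_conj} and then read off $\lfloor W_{\tilde{\varphi}}\rfloor$ from the vertical sections, with the second part reduced to $\rho_\varphi^*=\ind_{W_{\rho_\varphi}}$ from Lemma~\ref{le:fon_prop_3.5}. The only (harmless) deviations are that you dispose of the equatorial half-spaces by noting $\dom(\varphi^*)\subseteq W_{\rho_\varphi}$ rather than by the continuity/symmetry argument the paper uses, and that you thereby obtain $\dom(\varphi^*)=W_{\rho_\varphi}$ already under \eqref{eq:lemma_ineq} alone, which is a slightly stronger conclusion than stated.
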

\begin{proof}
    Since $\varphi\in \Cr$, it follows from the definition of $\rho_\varphi$ in \eqref{eq:rho_varphi} and \eqref{eq:varphi_tilde} that $\tilde{\varphi}$ is continuous. Moreover, \eqref{eq:lemma_ineq} ensures that $\tilde{\varphi}$ is strictly positive, and thus, $W_{\tilde{\varphi}}$ has positive volume. In addition, $W_{\tilde{\varphi}}$ is symmetric with respect to the hyperplane $H$ by construction, that is, $\refl_H(W_{\tilde{\varphi}})=W_{\tilde{\varphi}}$. From the continuity and symmetry of $\tilde{\varphi}$ together with Lemma~\ref{le:epi_psi_conj} we infer 
    \begin{align}\label{eq:wullf_from_epi}
        W_{\tilde{\varphi}}=\bigcap_{\nu\in\sN_{-}}H^-_{\nu}(\tilde{\varphi}(\nu)) \cap \refl_H\left(\bigcap_{\nu\in\sN_{-}}H^-_{\nu}(\tilde{\varphi}(\nu))\right)= \epi(\varphi^*)\cap \refl_H(\epi(\varphi^*)).
    \end{align}
    In particular, this shows that $(x,t) \in \Rn \times \R$ is contained in $W_{\tilde{\varphi}}$ if and only if $\varphi^*(x)\leq t \leq -\varphi^*(x)$, which is possible if and only if $x\in\{\varphi^*\leq 0\}$.

    Assume now that \eqref{eq:lemma_ineq2} holds. Since $\varphi\in \Cr$, this implies that there exists $C\geq 0$ such that
    \[
    \rho_{\varphi}\leq \varphi \leq \rho_{\varphi}+C,
    \]
    and therefore
    \[
    \rho_{\varphi}^* - C \leq \varphi^* \leq \rho_{\varphi}^*
    \]
    on $\Rn$. By Lemma~\ref{le:fon_prop_3.5} together with \eqref{eq:lemma_ineq} we have $\rho_{\varphi}^*=\ind_{W_{\rho_{\varphi}}}$ and thus
    \[
    \dom(\varphi^*)=\{\varphi^*\leq 0\}=W_{\rho_{\varphi}},
    \]
    which furthermore implies $\lfloor W_{\tilde{\varphi}} \rfloor = \varphi^*$ by the first part of the proof.
\end{proof}

\begin{remark}
As shown in the proof of Lemma~\ref{le:wulff_varphi_conj}, a function $\varphi\in\Cr$ satisfies \eqref{eq:lemma_ineq} and \eqref{eq:lemma_ineq2} if and only if there exists a convex body $K$ with $o\in\interior K$, namely $K=W_{\rho_\varphi}$, such that
\[
0\leq \varphi-h_K \leq C
\]
on $\Rn$ for some $C\geq 0$. Note that a similar condition also appears in the description of\linebreak$\fconvz^*$ (see \eqref{eq:fonvz_conj}) and was also pointed out by Klartag in \cite[Equation (8)]{Klartag_07}.\dssymb
\end{remark}

\subsection{Proof of Theorem~\ref{thm:wulff_ineq_fconvz}}
Let $\varphi\in \Cr$ be such that $c\leq \varphi(x)/\sqrt{1+|x|^2}$, $x\in\Rn$, for some $c>0$. By Lemma~\ref{le:varphi_tilde}, Theorem~\ref{thm:wulff}, \eqref{eq:wulff_alt}, \eqref{eq:ku_floor}, and \eqref{eq:oVb_V} we have
\begin{align}
\oZ_\varphi(u) &= \frac 12 \int_{\sN} \tilde{\varphi}(\nu) \d S_n(K^u)\notag \\
&\geq \left(\frac{V_{n+1}(K^u)}{V_{n+1}(W_{\tilde{\varphi}})} \right)^{\frac{n}{n+1}} \frac 12 \int_{\sN} \tilde{\varphi}(\nu) \d S_n(W_{\tilde{\varphi}},\nu) \label{eq:wulff_proof}\\
&= \left(\frac{\oVb_{n+1}(u)}{\oVb_{n+1}(\lfloor W_{\tilde{\varphi}} \rfloor) } \right)^{\frac{n}{n+1}} \oZ_{\varphi}(\lfloor W_{\tilde{\varphi}} \rfloor)\notag
\end{align}
for every $u\in \fconvz$. Together with Lemma~\ref{le:wulff_varphi_conj}, this gives \eqref{eq:wulff_fct}.

Now suppose that $\oVb_{n+1}(u)>0$. Equality holds in \eqref{eq:wulff_fct} if and only if it holds in \eqref{eq:wulff_proof}. By Theorem \ref{thm:wulff}, this is the case if and only if $K^u$ and $W_{\tilde{\varphi}}$ are homothetic. As $u,u_\varphi \in \fconvz$, the homothety of $K^u$ and $W_{\tilde{\varphi}}$ is equivalent to the homothety of the epi-graphs of $u$ and $u_\varphi$ with the exclusion of vertical translation, since $K^u$ and $W_{\tilde{\varphi}}$ are symmetric with respect to $e_{n+1}^\perp$. Thus, equality holds in \eqref{eq:wulff_fct} if and only if there exist $\lambda>0$ and $x_o \in \Rn$ such that \[
u(x)=\lambda \sq (u_\varphi \square I_{\{x_o\}})(x)=\lambda u_\varphi\left( \frac{x-x_o}{\lambda}\right)
\]
for $x\in\Rn$, concluding the proof.
\qed

\subsection{First Variations}
Let $u,v\in\fconvz$. Anticipating Lemma~\ref{le:infconv_ku} below, it is not hard to show that
\[
K^{u\infconv (t\sq v)} = K^u + t K^v
\]
for $t\geq 0$. Thus, by \eqref{eq:oVb_V}, \eqref{eq:first_variation_volume}, Lemma~\ref{le:varphi_tilde}, and Proposition~\ref{prop:varphi_u_conj},
\begin{equation*}
\frac{\d}{\d t} \bigg\vert_{t=0^+} \oVb_{n+1}(u\infconv(t\sq v)) = \frac 12 \frac{\d}{\d t} \bigg\vert_{t=0^+}  V_{n+1}(K^u + t K^v) = \frac 12 \int_{\sN} h_{K^v}(\nu) \d S_n(K^u,\nu) = \oZ_{v^*}(u).
\end{equation*}
The purpose of this section is to show that an analog relation holds true if we replace $u\infconv (t\sq v)=(u^*+t v^*)^*$ with the perturbation $(u^*+t\varphi)^*$ for certain $\varphi \in \Cr$. We will use the following version of \textit{Alexandrov's variational lemma} (see, for example, \cite[Lemma 7.5.3]{Schneider_CB}).
\begin{lemma}
\label{le:Aleksandrov}
    Let $\eta\in C(\sN)$ and let $K \in \KN$ be such that $o\in\interior(K)$. For sufficiently small $t\geq 0$ the function $h_K+t \eta$ is positive and
    \[
    \frac{\d}{\d t}\bigg\vert_{t=0^+} V_n(W_{h_K+t\eta}) = \int_{\sN} \eta(\nu)\d S_{n}(K, \nu).
    \]
\end{lemma}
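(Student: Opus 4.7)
The plan is to bound $V_{n+1}(L_t)$ where $L_t := W_{h_K + t\eta}$ from above and below by quantities whose first-order Taylor expansions at $t=0$ both agree with $V_{n+1}(K) + t \int_{\sN}\eta \,\d S_n(K,\nu)$. First I check that $L_t$ is well-defined for all sufficiently small $t \geq 0$: since $o \in \interior(K)$, the support function $h_K$ is bounded below by some $r > 0$ on $\sN$, and since $\eta$ is continuous on the compact sphere $\sN$ it is bounded, so $h_K + t \eta > 0$ uniformly on $\sN$ for every $t \in [0, t_0]$ with $t_0 > 0$ small. Continuity of the Wulff-shape construction with respect to uniform convergence of the defining function on $C(\sN)$, combined with $W_{h_K} = K$, then yields $L_t \to K$ in the Hausdorff metric as $t \to 0^+$; by the weak continuity of the surface area measure this gives $V_{n+1}(L_t) \to V_{n+1}(K)$ and $\int_{\sN}\eta\,\d S_n(L_t,\nu) \to \int_{\sN}\eta\,\d S_n(K,\nu)$.

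For the upper bound, the very definition of $L_t$ in \eqref{eq:def_wulff} forces $h_{L_t} \leq h_K + t \eta$ pointwise on $\sN$. Integrating against $S_n(K,\cdot)$ and using \eqref{eq:mixed_vol_int_s} yields $V(L_t, K, \ldots, K) \leq V_{n+1}(K) + \frac{t}{n+1}\int_{\sN}\eta\,\d S_n(K,\nu)$. Minkowski's first inequality $V(L_t, K, \ldots, K)^{n+1} \geq V_{n+1}(L_t)\, V_{n+1}(K)^{n}$ then gives
\[
V_{n+1}(L_t) \leq \frac{1}{V_{n+1}(K)^n}\bigl(V_{n+1}(K) + \tfrac{t}{n+1}\textstyle\int_{\sN}\eta \,\d S_n(K,\nu)\bigr)^{n+1},
\]
whose right-hand side is $V_{n+1}(K) + t\int_{\sN}\eta\,\d S_n(K,\nu) + o(t)$. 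Hence $\limsup_{t \to 0^+} t^{-1}(V_{n+1}(L_t) - V_{n+1}(K)) \leq \int_{\sN}\eta\,\d S_n(K,\nu)$, and in particular $V_{n+1}(L_t) - V_{n+1}(K) = O(t)$.

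For the lower bound, the Wulff-shape identity \eqref{eq:volume_wulff_shape} (applied with defining function $h_K + t \eta$, so that the support function $h_{L_t}$ agrees with $h_K + t \eta$ on $\supp S_n(L_t, \cdot)$) gives
\[
V_{n+1}(L_t) = \tfrac{1}{n+1}\int_{\sN} (h_K + t\eta)(\nu)\,\d S_n(L_t,\nu) = V(K, L_t, \ldots, L_t) + \tfrac{t}{n+1}\int_{\sN}\eta\,\d S_n(L_t,\nu),
\]
and Minkowski's first inequality in the form $V(K, L_t, \ldots, L_t) \geq V_{n+1}(K)^{1/(n+1)} V_{n+1}(L_t)^{n/(n+1)}$ rearranges to
\[
V_{n+1}(L_t) - V_{n+1}(K)^{\frac{1}{n+1}} V_{n+1}(L_t)^{\frac{n}{n+1}} \geq \tfrac{t}{n+1}\int_{\sN}\eta\,\d S_n(L_t,\nu).
\]
The first-order Taylor expansion of $x \mapsto x - V_{n+1}(K)^{1/(n+1)} x^{n/(n+1)}$ at $x = V_{n+1}(K)$ (the derivative there equals $\frac{1}{n+1}$) shows that the left-hand side equals $\frac{1}{n+1}(V_{n+1}(L_t) - V_{n+1}(K)) + o(V_{n+1}(L_t) - V_{n+1}(K))$; combined with the $O(t)$ bound from the previous step and the convergence $\int\eta\,\d S_n(L_t,\cdot) \to \int \eta\,\d S_n(K,\cdot)$, this yields $\liminf_{t \to 0^+} t^{-1}(V_{n+1}(L_t) - V_{n+1}(K)) \geq \int_{\sN}\eta\,\d S_n(K,\nu)$. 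Together with the upper bound, the right derivative exists and equals $\int_{\sN}\eta\,\d S_n(K,\nu)$.

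The main (though standard) technical obstacle is the Hausdorff continuity $L_t \to K$ and the resulting weak convergence of $S_n(L_t,\cdot) \to S_n(K,\cdot)$. The subtlety is that the approximating defining functions $h_K + t\eta$ need not themselves be support functions, so one cannot simply identify $L_t$ with $K + t\, (\text{something})$ and read off continuity from that; instead one has to invoke the continuity of the Wulff-shape map $C(\sN) \to \KN$, $\eta \mapsto W_\eta$, with respect to uniform convergence, together with the identity $W_{h_K} = K$.
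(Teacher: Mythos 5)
The paper does not prove this lemma at all; it is quoted as a known result with a pointer to Schneider, \emph{Convex Bodies}, Lemma 7.5.3, and your argument is essentially a correct reconstruction of that standard proof: upper bound from $h_{W_{h_K+t\eta}}\leq h_K+t\eta$ plus Minkowski's first inequality, lower bound from the identity \eqref{eq:volume_wulff_shape} plus Minkowski's first inequality, glued by the continuity of the Aleksandrov-body map. One small point to tighten: after the upper bound you assert $V_{n+1}(L_t)-V_{n+1}(K)=O(t)$, but that step only yields the one-sided bound $V_{n+1}(L_t)-V_{n+1}(K)\leq Ct$; the matching bound $\geq -Ct$ is not free, though it does follow from your own lower-bound inequality (the right-hand side there is $\geq -\tfrac{t}{n+1}\sup_t\int|\eta|\d S_n(L_t,\cdot)$, and since $V_{n+1}(L_t)\to V_{n+1}(K)>0$ the factorization $V_{n+1}(L_t)-V_{n+1}(K)^{1/(n+1)}V_{n+1}(L_t)^{n/(n+1)}=V_{n+1}(L_t)^{n/(n+1)}\bigl(V_{n+1}(L_t)^{1/(n+1)}-V_{n+1}(K)^{1/(n+1)}\bigr)$ converts this into the missing linear lower bound), so the gap is purely presentational. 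With that sentence added, the proof is complete and matches the cited source.
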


\begin{proposition}
\label{prop:variational_formula}
If $u\in\fconvz$ and $\varphi\in\Cr$ are such that $\oVb_{n+1}(u)>0$, then
\[
\frac 12 \frac{\d}{\d t}\bigg\vert_{t=0^+} V_{n+1}(W_{h_{K^u}+t\tilde{\varphi}})=\oZ_\varphi(u),
\]
where $\tilde{\varphi}$ is as in \eqref{eq:varphi_tilde}. If in addition $\rho_{\varphi}\leq \varphi$ on $\Rn$, then
\[
\frac{\d}{\d t}\bigg\vert_{t=0^+} \oVb_{n+1}((u^*+t\varphi)^*) = \oZ_{\varphi}(u).
\]
\end{proposition}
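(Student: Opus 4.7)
The plan is to apply Alexandrov's variational lemma (Lemma~\ref{le:Aleksandrov}) with $K=K^u$ and $\eta=\tilde\varphi$, using the surface-area representation of $\oZ_\varphi(u)$ provided by Lemma~\ref{le:varphi_tilde}. I would first note that all four quantities appearing in the proposition are invariant under horizontal translations $u\mapsto u(\cdot-x_o)$: indeed, $K^{u(\cdot-x_o)}=K^u+(x_o,0)$ adds the support function $\langle(x_o,0),\cdot\rangle$ to $h_{K^u}$, only translating the associated Wulff shape, while the perturbation $(u^*+t\varphi)^*$ transforms by the same horizontal translation. Since $\oVb_{n+1}(u)>0$ gives $V_{n+1}(K^u)>0$ via \eqref{eq:oVb_V}, I may assume $o\in\interior(K^u)$, so that $h_{K^u}$ is strictly positive on $\sN$. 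Then Alexandrov's lemma applied with $\eta=\tilde\varphi$ (continuous on $\sN$ since $\varphi\in\Cr$) together with Lemma~\ref{le:varphi_tilde} yields
\[
\frac{\d}{\d t}\bigg\vert_{t=0^+} V_{n+1}(W_{h_{K^u}+t\tilde\varphi})=\int_{\sN}\tilde\varphi(\nu)\d S_n(K^u,\nu)=2\oZ_\varphi(u),
\]
establishing the first identity.

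For the second identity, the core step is to show that, under $\rho_\varphi\leq\varphi$, one has $K^{(u^*+t\varphi)^*}=W_{h_{K^u}+t\tilde\varphi}$ for sufficiently small $t\geq 0$. Applying Lemma~\ref{le:epi_psi_conj} to $\psi=u^*+t\varphi$ and using the identities $u^*(\gnom(\nu))/\sqrt{1+|\gnom(\nu)|^2}=h_{K^u}(\nu)$ (from the proof of Proposition~\ref{prop:varphi_u_conj}) and $\varphi(\gnom(\nu))/\sqrt{1+|\gnom(\nu)|^2}=\tilde\varphi(\nu)$ (from \eqref{eq:varphi_tilde}) on $\sN_-$ expresses $\epi((u^*+t\varphi)^*)$ as an intersection of halfspaces indexed by $\sN_-$. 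Intersecting with $\refl_H(\epi((u^*+t\varphi)^*))$ extends the index set to $\sN_-\cup\sN_+$, and the continuity of $h_{K^u}+t\tilde\varphi$ on the full sphere $\sN$ (at the equator this uses continuity of $\rho_\varphi$ built into $\Cr$) permits passing to the intersection over all of $\sN$ without introducing new constraints, producing precisely the Wulff shape $W_{h_{K^u}+t\tilde\varphi}$.

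Before invoking the correspondence \eqref{eq:ref_Ku} on $(u^*+t\varphi)^*$, I would separately verify that this function lies in $\fconvz$ for small $t\geq 0$. Since $u^*\in\fconvz^*$ gives $h_{\dom(u)}\leq u^*\leq h_{\dom(u)}+C_1$, and since $\varphi\in\Cr$ with $\rho_\varphi\leq\varphi$ yields $\rho_\varphi\leq\varphi\leq\rho_\varphi+C_2$, summing these bounds and taking conjugates sandwiches $(u^*+t\varphi)^*$ between $\ind_{W_{h_{\dom(u)}+t\rho_\varphi}}-(C_1+tC_2)$ and $\ind_{W_{h_{\dom(u)}+t\rho_\varphi}}$; the outer Wulff shape is bounded for small $t$ as a perturbation of $\dom(u)$, so $(u^*+t\varphi)^*$ has compact domain and non-positive values, hence lies in $\fconvz$. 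Once the body identification is available, combining \eqref{eq:oVb_V} with the first claim gives
\[
\frac{\d}{\d t}\bigg\vert_{t=0^+}\oVb_{n+1}((u^*+t\varphi)^*)=\frac 12\frac{\d}{\d t}\bigg\vert_{t=0^+} V_{n+1}(K^{(u^*+t\varphi)^*})=\oZ_\varphi(u).
\]

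The main obstacle I anticipate is the identification $K^{(u^*+t\varphi)^*}=W_{h_{K^u}+t\tilde\varphi}$, in particular the verification that $(u^*+t\varphi)^*$ remains in $\fconvz$ uniformly for $t$ in a right-neighborhood of $0$ and that extending the halfspace intersection to the equator adds no essential constraint; once these are in place, both formulas reduce to a direct application of Alexandrov's variational lemma.
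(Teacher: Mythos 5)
Your proposal is correct and follows essentially the same route as the paper: Alexandrov's variational lemma combined with Lemma~\ref{le:varphi_tilde} for the first identity, and the identification $K^{(u^*+t\varphi)^*}=W_{h_{K^u}+t\tilde\varphi}$ for small $t$ for the second. The only cosmetic difference is that you re-derive this identification inline via Lemma~\ref{le:epi_psi_conj} and a sandwich argument, whereas the paper verifies that $\psi_t=u^*+t\varphi$ satisfies the hypotheses of Lemma~\ref{le:wulff_varphi_conj} and cites that lemma directly.
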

\begin{proof}
Since $\oVb_{n+1}(u)>0$ there exists $x_o\in\interior(\dom(u))$ such that $u(x_o)<0$. 
For $u_{x_o}(x)=u(x-x_o)$, $x\in\Rn$, we have
\[
(u_{x_o}^*+t \varphi)^*(x)=(u^*+\langle x_o,\cdot\rangle + t \varphi)^*(x) = (u^*+t\varphi)^*(x - x_o)
\]
for $x\in\Rn$ and $t\geq 0$. In particular, this shows that $\oVb_{n+1}((u^*+t\varphi)^*) = \oVb_{n+1}((u_{x_o}^*+t\varphi)^*)$. Since also $\oZ_{\varphi}(u)=\oZ_{\varphi}(u_{x_o})$ we may assume without loss of generality that $x_o=0$ and thus $o\in\interior(K^u)$. By Lemma~\ref{le:Aleksandrov} and Lemma~\ref{le:varphi_tilde} we now have
\[
\frac{\d}{\d t}\bigg\vert_{t=0^+} V_{n+1}(W_{h_{K^u}+t\tilde{\varphi}})=\int_{\sN} \tilde{\varphi}(\nu)\d S_n(K^u,\nu)=2\oZ_\varphi(u).
\]
Now assume that $\rho_{\varphi}\leq\varphi$ on $\Rn$. We need to show that
\begin{equation}
\label{eq:v_ovb_wulff_pert_to_show}
V_{n+1}(W_{h_{K^u}+t\tilde{\varphi}})=2 \oVb_{n+1}((u^*+t\varphi)^*)
\end{equation}
for sufficiently small $t\geq 0$. Since $V_{n+1}(K^u)=2\oVb_{n+1}(u)>0$ and since $o\in\interior(K^u)$, there exists $c_1>0$ such that $c_1 B^{n+1}\subseteq K^u$ and thus, by \eqref{eq:ku_floor} and \eqref{eq:kfloor_conj_hk},
\begin{equation}
\label{eq:conj_u_c_1}
u^*(x)=h_K(x,-1)\geq c_1 h_{B^{n+1}}(x,-1)= c_1 \sqrt{1+|x|^2}
\end{equation}
for $x\in\Rn$. Furthermore, since $\rho_{\varphi}$ is $1$-homogeneous and bounded on $\sn$, there exists $c_2\geq 0$ such that
\begin{equation}
\label{eq:varphi_c_2}
\varphi(x)\geq \rho_{\varphi}(x)\geq -c_2 |x| \geq -c_2 \sqrt{1+|x|^2} 
\end{equation}
for $x\in\Rn$. Next, for $t\geq 0$ let $\psi_t=u^*+t\varphi$, which by Proposition~\ref{prop:varphi_u_conj} is an element of $\Cr$. By \eqref{eq:conj_u_c_1} and \eqref{eq:varphi_c_2} we now have
\[
0<c_1-t c_2 \leq \frac{\psi_t(x)}{\sqrt{1+|x|^2}}
\]
for every $x\in\Rn$ and $t\in [0,c_1/c_2)$ (with the convention $c_1/c_2=+\infty$ if $c_2=0$). Furthermore, since $u\leq \ind_{\dom(u)}$, it follows from Proposition~\ref{prop:varphi_u_conj} together with our assumptions on $\varphi$ that for every $t\geq 0$,
\[
\psi_t = u^* + t \varphi \geq 
\rho_{\psi_t} = h_{\dom(u)} + t \rho_{\varphi} \geq \rho_{u^*}+ t \rho_{\varphi} = \rho_{\psi_t}
\]
pointwise on $\Rn$. Thus, for $t\in[0,c_1/c_2)$ we may apply Lemma~\ref{le:wulff_varphi_conj} to $\psi_t$ to obtain
\[
(u^*+t\varphi)^* = \psi_t^* = \lfloor W_{\tilde{\psi_t}} \rfloor.
\]
By Proposition~\ref{prop:varphi_u_conj} we have $\tilde{\psi_t}=h_{K^u} + t \tilde{\varphi}$, which together with \eqref{eq:ku_floor} and \eqref{eq:oVb_V} gives \eqref{eq:v_ovb_wulff_pert_to_show}.
\end{proof}

\section{Mixed Functionals and Shadow Systems}
\label{se:mixed}
\subsection{Mixed Functionals}
We begin this section with a simple but essential result for the further exposition. The proof follows standard arguments, as demonstrated, for example, in \cite[Section 1.6.2]{Schneider_CB}. Recall that 

\begin{lemma}
\label{le:infconv_ku}
For $u_1,u_2\in\fconvz$ and $\lambda_1,\lambda_2\geq 0$, we have
\[
K^{(\lambda_1 \sq u_1) \infconv (\lambda_2 \sq u_2)} = \lambda_1 K^{u_1} + \lambda_2 K^{u_2}.
\]
\end{lemma}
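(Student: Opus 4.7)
The plan is to use the bijection $u\mapsto K^u$ between $\fconvz$ and $\KNH$ (equivalently, its inverse $K\mapsto\lfloor K\rfloor$) together with the identity $\lfloor K\rfloor^*(x)=h_K(x,-1)$ from \eqref{eq:kfloor_conj_hk}. The underlying idea is that once both sides of the asserted identity are recognized as elements of $\KNH$, the equality of convex bodies can be pulled back to an equality of convex conjugates on $\Rn$, where infimal convolution turns into pointwise addition.

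First I would verify that $u:=(\lambda_1\sq u_1)\infconv(\lambda_2\sq u_2)$ lies in $\fconvz$, so that $K^u$ is well-defined: its domain equals the compact set $\lambda_1\dom(u_1)+\lambda_2\dom(u_2)$, and the condition $\max u\leq 0$ is inherited from the summands. I would also check that $M:=\lambda_1 K^{u_1}+\lambda_2 K^{u_2}$ belongs to $\KNH$, which is immediate from the fact that $\refl_H$ commutes with Minkowski addition and positive scalar multiplication. Then, by \eqref{eq:kfloor_conj_hk} applied to $M$, the linearity of support functions under Minkowski addition and scaling, and \eqref{eq:kfloor_conj_hk} applied to each $K^{u_i}$,
\[
\lfloor M\rfloor^*(x)=h_M(x,-1)=\lambda_1 h_{K^{u_1}}(x,-1)+\lambda_2 h_{K^{u_2}}(x,-1)=\lambda_1 u_1^*(x)+\lambda_2 u_2^*(x)
\]
for every $x\in\Rn$. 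On the other hand, the standard duality identity $\bigl((\lambda_1\sq u_1)\infconv(\lambda_2\sq u_2)\bigr)^*=\lambda_1 u_1^*+\lambda_2 u_2^*$ recalled in the excerpt, combined with \eqref{eq:kfloor_conj_hk} and \eqref{eq:ku_floor}, gives $\lfloor K^u\rfloor^*=u^*=\lambda_1 u_1^*+\lambda_2 u_2^*$, so that $\lfloor M\rfloor^*=\lfloor K^u\rfloor^*$. Taking conjugates once more yields $\lfloor M\rfloor=\lfloor K^u\rfloor$, and applying the map $K^{(\cdot)}$ together with \eqref{eq:kfloork} produces the desired identity $M=K^u$.

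I do not anticipate a genuine obstacle here; the only point requiring care is checking that $M\in\KNH$ (so that \eqref{eq:kfloor_conj_hk} is legitimately applicable to $M$) and that $u\in\fconvz$ (so that $K^u$ is defined). Once both sides sit inside the appropriate classes, the argument reduces to a straightforward chase through the bijection $\KNH\leftrightarrow\fconvz$ and the duality between infimal convolution and pointwise addition of conjugates. The degenerate cases $\lambda_1=0$ or $\lambda_2=0$ can be handled separately using $K^{\ind_{\{o\}}}=\{o\}=0\cdot K^{u_i}$, in accordance with the natural convention $0\sq u_i=\ind_{\{o\}}$.
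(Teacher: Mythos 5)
Your argument is correct, but it takes a different route from the paper. The paper proves the identity by a direct set-theoretic computation: it first checks $K^{\lambda\sq u}=\lambda K^u$ via $\epi(\lambda\sq u)=\lambda\,\epi(u)$, then describes the points of $K^{u_1}+K^{u_2}$ explicitly, observes that this sum lies in $\KNH$, and computes $\lfloor K^{u_1}+K^{u_2}\rfloor$ from the definition \eqref{eq:boundary_map}, at which point the non-positivity of $u_1,u_2$ on their domains is used to identify $\inf\{t: u_1(x_1)+u_2(x_2)\leq t\leq -(u_1(x_1)+u_2(x_2))\}$ with the infimal convolution. You instead dualize: after the same two membership checks ($u\in\fconvz$ and $M\in\KNH$, both of which are genuinely needed and which you correctly flag), you compare conjugates using \eqref{eq:kfloor_conj_hk}, the Minkowski linearity of support functions, and the identity $\bigl((\lambda_1\sq u_1)\infconv(\lambda_2\sq u_2)\bigr)^*=\lambda_1u_1^*+\lambda_2u_2^*$, and then undo the conjugation via biconjugation on $\fconvs$ and \eqref{eq:kfloork}. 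Your route is shorter and makes the statement transparent as the image under $K\mapsto\lfloor K\rfloor$ of the standard duality between $\infconv$ and pointwise addition, at the cost of routing everything through the Legendre transform and the black-boxed identity \eqref{eq:kfloor_conj_hk}; the paper's computation is more self-contained and shows exactly where the constraint $u_i\leq 0$ enters. Your handling of the degenerate cases $\lambda_i=0$ via $0\sq u_i=\ind_{\{o\}}$ and $(\ind_{\{o\}})^*\equiv 0$ is also consistent with the conventions in the paper.
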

\begin{proof}
For $u\in\fconvz$ and $\lambda>0$ we have $\epi(\lambda \sq u) = \lambda  \epi(u)$ and thus, by \eqref{eq:ref_Ku}, $K^{\lambda \sq u} = \lambda K^u$. For the special case $\lambda=0$ we have $\epi(0 \sq u)=\{(o,t)\in \Rn\times\R : t\geq 0\}$ and thus, if follows again from \eqref{eq:ref_Ku}, that $K^{0\sq u} = \{(o,0)\in\Rn\times \R\} = \{o\in \RN\} = 0 K^u$. In particular, $\lambda \cdot u \in \fconvz$ for every $\lambda \geq 0$.

Now let $u_1,u_2\in\fconvz$. We need to show that $K^{u_1 \infconv u_2} = K^{u_1} + K^{u_2}$. Observe that for $x\in\Rn$ and $t\in\R$ we have $(x,t)\in K^{u_1}+K^{u_2}$ if and only if there exist $(x_1,t_1)\in K^{u_1}$ and $(x_2,t_2)\in K^{u_2}$ such that
\[
u_1(x_1)\leq t_1 \leq -u_1(x_1),\quad u_2(x_2)\leq t_2 \leq -u_2(x_2),
\]
and $x=x_1+x_2$, $t=t_1+t_2$. Note, that this shows that $(x,t)\in K^{u_1}+K^{u_2}$ if and only if $(x,-t)\in K^{u_1}+K^{u_2}$ and therefore $K^{u_1}+K^{u_2}\in \KNH$. Thus, if $x\in \proj_H (K^{u_1}+K^{u_2})$, then
\begin{align*}
\lfloor K^{u_1}+K^{u_2}\rfloor (x) &= \inf\{t: (x,t)\in K^{u_1}+K^{u_2}\}\\
&= \inf\{ t : u_1(x_1)+u_2(x_2)\leq t \leq -(u_1(x_1)+u_2(x_2)), x_1+x_2=x\}\\
&= \inf\{ t: u_1(x_1)+u_2(x_2) \leq t\}\\
&= (u_1 \infconv u_2)(x),
\end{align*}
where we have used that $u_1$ and $u_2$ are non-positive on their domains. The desired statement now follows from \eqref{eq:kfloork}.
\end{proof}

Let $\oVb\colon (\fconvz)^{n+1}\to\R$ be given by
\begin{equation}
\label{eq:def_ovb}
\oVb(u_0,\ldots,u_n) = \frac 12 V(K^{u_0},\ldots,K^{u_n})
\end{equation}
for $u_0,\ldots,u_n\in\fconvz$. From \eqref{eq:oVb_V} and Lemma~\ref{le:infconv_ku} together with \eqref{eq:mixed_vol} we immediately obtain
\begin{equation}
\label{eq:mixed_vol_fconvz}
\oVb_{n+1}\big((\lambda_1 \sq u_1) \infconv \cdots \infconv (\lambda_m \sq u_m)\big) = \sum_{i_0,\ldots,i_n=1}^m \lambda_{i_0}\cdots \lambda_{i_n} \oVb(u_{i_0},\ldots,u_{i_n})
\end{equation}
for $m\in\N$, $u_1,\ldots,u_m\in\fconvz$, and $\lambda_1,\ldots,\lambda_m>0$. In particular,  \eqref{eq:mixed_vol_fconvz} serves as an equivalent (implicit) definition of the functional $\oVb$, presuming that $\oVb$ is symmetric in its entries.

Using \eqref{eq:def_ovb} together with known results for mixed volumes of convex bodies, we immediately retrieve properties and results for the functional $\oVb$. For example, the Alexandrov--Fenchel inequality for convex bodies \eqref{eq:AF} shows
\begin{equation}
\label{eq:af_ovb}
\oVb(u_0,u_1,\ldots,u_n)^2\geq \oVb(u_0,u_0,u_2,\ldots,u_n) \oVb(u_1,u_1,u_2,\ldots,u_n)
\end{equation}
for $u_0,\ldots,u_n\in\fconvz$. As a consequence, for every $1\leq m\leq n+1$, the map
\[
\lambda\mapsto \oVb(u_\lambda[m],u_m,\ldots,u_n)^{\frac 1m}, \quad \lambda\in [0,1],
\]
is concave, where we write
\[
u_\lambda = \big((1-\lambda)\sq u_0\big)\infconv \big(\lambda \sq u_1\big).
\]
In order to give additional meaning to such results, we seek to provide a more direct description of $\oVb(u_0,\ldots,u_n)$.

First, the properties of the mixed area measure on $\K^{n+1}$ (compare \cite[Theorem 5.1.7]{Schneider_CB}) together with \eqref{eq:sa_sn} and Lemma~\ref{le:infconv_ku} defines a symmetric map $\Sa$ on $(\fconvz)^n$ into the space of finite Radon measures on $\sn$ such that
\begin{equation}
\label{eq:mixed_s_s_sN0}
\Sa(u_1,\ldots,u_n;\cdot) = \frac 12 S (K^{u_1},\ldots,K^{u_n},\cdot)\Big\vert_{\sN_0},
\end{equation}
for $u_1,\ldots,u_n\in\fconvz$. Equivalently, this measure is characterized by the relation
\begin{equation}
\label{eq:mixed_meas_fconvz}
\Sa\big((\lambda_1 \sq u_1) \infconv \cdots \infconv (\lambda_m \sq u_m)\big);\cdot) = \sum_{i_1,\ldots,i_n=1}^m \lambda_{i_1}\cdots \lambda_{i_n} \Sa(u_{i_1},\ldots,u_{i_n};\cdot)
\end{equation}
for $m\in\N$, $u_1,\ldots,u_m\in\fconvz$, and $\lambda_1,\ldots,\lambda_m>0$.

Next, we need the following generalization of Lemma~\ref{le:int_ku}, which is a consequence of \cite[Corollary 4.9]{Hug_Mussnig_Ulivelli}.

\begin{lemma}
\label{le:int_ku_mixed}
If $\varphi\colon\Rn\to\R$ is bounded and Borel measurable, then
\[
\int_{\sN_{-}} \tfrac{\varphi(\gnom(\nu))}{\sqrt{1+|\gnom(\nu)|^2}} \d S(K^{u_1},\ldots,K^{u_n},\nu) = \int_{\Rn} \varphi(y) \d\MAp(u_1,\ldots,u_n;y)
\]
for every $u_1,\ldots,u_n\in\fconvz$.
\end{lemma}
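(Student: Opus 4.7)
The plan is to deduce the mixed identity from the non-mixed Lemma~\ref{le:int_ku} by a standard polarization argument, using the polynomial expansion \eqref{eq:mixed_area_measure} on the body side and the analogous polynomial expansion of the conjugate Monge--Amp\`ere measure recalled in Section~\ref{se:convex_functions} on the function side.

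First I would fix $u_1,\ldots,u_n\in\fconvz$ and non-negative parameters $\lambda_1,\ldots,\lambda_n\geq 0$, and set
\[
u=(\lambda_1\sq u_1)\infconv\cdots\infconv(\lambda_n\sq u_n),
\]
which again lies in $\fconvz$. Applying Lemma~\ref{le:int_ku} to $u$ and rewriting the right-hand side via \eqref{eq:MAp} (i.e.\ identifying the push-forward of the Lebesgue measure under $\nabla u$ with $\MAp(u;\cdot)$, and then passing from $\chi_B$ to a bounded Borel $\varphi$ in the standard way), one obtains
\[
\int_{\sN_-}\tfrac{\varphi(\gnom(\nu))}{\sqrt{1+|\gnom(\nu)|^2}}\d S_n(K^u,\nu)
=\int_{\Rn}\varphi(y)\d\MAp(u;y).
\]

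Next, Lemma~\ref{le:infconv_ku} gives $K^u=\lambda_1 K^{u_1}+\cdots+\lambda_n K^{u_n}$, so the polynomial expansion \eqref{eq:mixed_area_measure} yields
\[
S_n(K^u,\cdot)=\sum_{i_1,\ldots,i_n=1}^n \lambda_{i_1}\cdots\lambda_{i_n}\,S(K^{u_{i_1}},\ldots,K^{u_{i_n}},\cdot),
\]
while the corresponding expansion for the conjugate mixed Monge--Amp\`ere measure gives
\[
\MAp(u;\cdot)=\sum_{i_1,\ldots,i_n=1}^n \lambda_{i_1}\cdots\lambda_{i_n}\,\MAp(u_{i_1},\ldots,u_{i_n};\cdot).
\]
Substituting these into the previous display and integrating $\varphi$ (which is legitimate since $\varphi$ is bounded and Borel measurable and all measures involved are finite Radon measures) produces two polynomials in $(\lambda_1,\ldots,\lambda_n)$ that agree for all non-negative values of the parameters, hence coefficient-wise. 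Both the mixed area measure $S$ and the conjugate mixed Monge--Amp\`ere measure $\MAp$ are symmetric in their entries, so extracting the coefficient of $\lambda_1\cdots\lambda_n$ yields $n!$ on each side; these factors cancel, leaving exactly the asserted equality.

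The main potential obstacle is the legitimacy of the polarization step: one must know that both expansions are genuine polynomials of degree $n$ in the $\lambda_i$ with symmetric coefficients, and that the coefficients are exactly the mixed area measure and the mixed conjugate Monge--Amp\`ere measure recalled in Sections~\ref{se:convex_bodies} and \ref{se:convex_functions}. Both facts are standard, so once they are in place the argument is essentially bookkeeping.
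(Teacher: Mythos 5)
Your argument is correct, but it is worth recording that it is not the route the paper takes: the paper gives no internal proof of Lemma~\ref{le:int_ku_mixed} at all, deferring instead to \cite[Corollary 4.9]{Hug_Mussnig_Ulivelli}. What you propose is a self-contained polarization derivation from the unmixed Lemma~\ref{le:int_ku}, and all the ingredients you need are indeed available in the paper: Lemma~\ref{le:infconv_ku} gives $K^{(\lambda_1\sq u_1)\infconv\cdots\infconv(\lambda_n\sq u_n)}=\lambda_1K^{u_1}+\cdots+\lambda_nK^{u_n}$ without circularity (its proof uses only \eqref{eq:ref_Ku}--\eqref{eq:kfloork}); the expansion \eqref{eq:mixed_area_measure} is an identity of measures on $\sN$ and hence restricts to $\sN_-$; the expansion of $\MAp$ of the infimal convolution follows from $\big((\lambda_1\sq u_1)\infconv\cdots\big)^*=\lambda_1u_1^*+\cdots+\lambda_nu_n^*$ together with the polynomiality of $\MA$ recalled in Section~\ref{se:convex_functions}; and two polynomials agreeing on the orthant $[0,\infty)^n$ agree identically, so comparing the (symmetric) coefficients of $\lambda_1\cdots\lambda_n$ gives the claim after cancelling the factor $n!$. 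The trade-off is the usual one: the citation keeps the paper short and leans on a result proved in greater generality elsewhere, while your argument makes the lemma a corollary of material already on the page, at the cost of half a page of bookkeeping.
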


We can now give the following description of the functional $\oVb$.

\begin{proposition}
\label{prop:rep_ovb}
For $u_0,\ldots,u_n\in\fconvz$ we have
\begin{multline}
\label{eq:rep_ovb}
\oVb(u_0,\ldots,u_n)\\
= \frac{1}{n+1}\left(\int_{\Rn} u_0^*(x) \d\MAp(u_1,\ldots,u_n;x) + \int_{\sn} h_{\dom(u_0)}(z) \d \Sa (u_1,\ldots,u_n;z)\right).
\end{multline}
\end{proposition}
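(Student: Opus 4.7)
The starting point is the definition $\oVb(u_0,\dots,u_n) = \frac{1}{2}V(K^{u_0},\dots,K^{u_n})$ combined with the classical integral representation \eqref{eq:mixed_vol_int_s}, which gives
\[
\oVb(u_0,\dots,u_n) = \frac{1}{2(n+1)} \int_{\sN} h_{K^{u_0}}(\nu)\d S(K^{u_1},\dots,K^{u_n},\nu).
\]
I would decompose $\sN = \sN_- \cup \sN_0 \cup \sN_+$ and exploit the fact that each $K^{u_i}$ lies in $\KNH$, so that both $h_{K^{u_0}}$ and the mixed area measure $S(K^{u_1},\dots,K^{u_n},\cdot)$ are invariant under $\refl_H$. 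The two open-hemisphere contributions are therefore equal, yielding
\[
\oVb(u_0,\dots,u_n) = \frac{1}{n+1}\int_{\sN_-} h_{K^{u_0}}(\nu)\d S + \frac{1}{2(n+1)}\int_{\sN_0} h_{K^{u_0}}(\nu)\d S.
\]

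Next, I would rewrite each piece in functional language. On $\sN_-$, combining \eqref{eq:ku_floor} and \eqref{eq:kfloor_conj_hk} with the factorization $\nu = |\nu_{n+1}|\,(\gnom(\nu),-1)$ gives
\[
h_{K^{u_0}}(\nu) = |\nu_{n+1}|\, h_{K^{u_0}}(\gnom(\nu),-1) = \frac{u_0^*(\gnom(\nu))}{\sqrt{1+|\gnom(\nu)|^2}},
\]
which is exactly the pullback that Lemma~\ref{le:int_ku_mixed} converts into an integral against $\MAp(u_1,\dots,u_n;\cdot)$. On the equator, identifying $\sN_0$ with $\sn$ and using $\proj_H K^{u_0} = \dom(u_0)$ (from \eqref{eq:boundary_map} and \eqref{eq:ku_floor}) yields $h_{K^{u_0}}\big\vert_{\sN_0} = h_{\dom(u_0)}$, while the mixed analog \eqref{eq:mixed_s_s_sN0} of \eqref{eq:sa_sn} identifies the restriction of $S(K^{u_1},\dots,K^{u_n},\cdot)$ to the equator with $2\,\Sa(u_1,\dots,u_n;\cdot)$. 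Assembling the two contributions produces exactly the right-hand side of \eqref{eq:rep_ovb}.

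The one genuine technical point is that Lemma~\ref{le:int_ku_mixed} is stated for \emph{bounded} Borel integrands, whereas $u_0^*$ has linear growth. I would circumvent this by writing $u_0^* = h_{\dom(u_0)} + g$ with $0\leq g\leq C$ (via Proposition~\ref{prop:varphi_u_conj}); the bounded piece $g$ is handled directly by the lemma, while the homogeneous piece is tame on the sphere side because $h_{\dom(u_0)}(\gnom(\nu))/\sqrt{1+|\gnom(\nu)|^2} = h_{\dom(u_0)}(\nu_1,\dots,\nu_n)$ is bounded on $\sN_-$. Alternatively, one may pass to the limit on both sides of Lemma~\ref{le:int_ku_mixed} applied to the truncations $(-M)\vee(u_0^*\wedge M)$, using dominated convergence on the sphere side (with $h_{K^{u_0}}$ as the integrable majorant) to transfer the limit to $\MAp$. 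Once this extension is in hand, the remainder of the proof is the bookkeeping sketched above.
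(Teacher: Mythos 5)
Your proposal is correct and follows essentially the same route as the paper's proof: the representation \eqref{eq:mixed_vol_int_s}, the reflection symmetry of the bodies $K^{u_i}\in\KNH$, the identity $h_{K^{u_0}}(\nu)=u_0^*(\gnom(\nu))/\sqrt{1+|\gnom(\nu)|^2}$ on $\sN_-$ from Proposition~\ref{prop:varphi_u_conj}, Lemma~\ref{le:int_ku_mixed}, and the equatorial identification via \eqref{eq:mixed_s_s_sN0}. Your additional care in extending Lemma~\ref{le:int_ku_mixed} beyond bounded integrands addresses a point the paper passes over silently (the integrand on the sphere side is bounded even though $u_0^*$ is not), and either of your two fixes is adequate.
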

\begin{proof}
Proposition~\ref{prop:varphi_u_conj} shows that
\[
\frac{u^*(\gnom(\nu))}{\sqrt{1+|\gnom(\nu)|^2}} = h_{K^u}(\nu)
\]
for $u\in\fconvz$ and $\nu\in\sN_{-}$. Furthermore, identifying the hyperplane $H\subset \RN$ with $\Rn$, one infers from \eqref{eq:boundary_map} and \eqref{eq:ku_floor} that
\[
h_{K^u}\big\vert_{H} = h_{\proj_H K^u} = h_{\dom(u)}
\]
for $u\in\fconvz$. Thus, by \eqref{eq:def_ovb}, \eqref{eq:mixed_vol_int_s}, \eqref{eq:mixed_s_s_sN0}, Lemma~\ref{le:int_ku_mixed}, and considering the symmetry of $\KNH$,
\begin{align*}
(n+1)\,\oVb(u_0,\ldots,u_n) &= \frac 12 \int_{\sN} h_{K^{u_0}}(\nu) \d S(K^{u_1},\ldots,K^{u_n},\nu)\\
&= \int_{\sN_{-}} h_{K^{u_0}}(\nu) \d S(K^{u_1},\ldots,K^{u_n},\nu) + \frac 12 \int_{\sN_0} h_{K^{u_0}}(\nu) \d S(K^{u_1},\ldots,K^{u_n},\nu)\\
&= \int_{\Rn} u_0^*(x) \d\MAp(u_1,\ldots,u_n;x) + \int_{\sn} h_{\dom(u_0)}(z) \d \Sa (u_1,\ldots,u_n;z),
\end{align*}
for $u_0,\ldots,u_n\in\fconvz$.
\end{proof}

\begin{remark}
While it directly follows from \eqref{eq:def_ovb} that $\oVb(u_0,\ldots,u_n)$ is symmetric in its entries, the individual integrals on the right side of \eqref{eq:rep_ovb} do not share this property in general, even though the respective measures are symmetric in their entries. To see this recall that the measure $\MAp(u_1,\ldots,u_n;\cdot)$ is invariant under vertical translations of the functions $u_1,\ldots,u_n\in\fconvz$ but, in general, $\int_{\Rn} u_0^*(x)\d\MAp(u_1,\ldots,u_n;x)$ changes when we replace $u_0$ with $u_0+c$ for some $c\in\R$ such that also $u_0+c\in\fconvz$. Thus,
\[
(u_0,\ldots,u_n)\mapsto \int_{\Rn} u_0^*(x) \d\MAp(u_1,\ldots,u_n;x)
\]
is not symmetric on $(\fconvz)^{n+1}$. In contrast to this, $h_{\dom(u_0)}$ is invariant under vertical translations of $u_0$ while the measure $\d \Sa (u_1,\ldots,u_n;z)$ is highly dependent on the vertical position of the functions $u_1,\ldots,u_n\in\fconvz$. However, symmetry of the individual integrals is possible under additional assumptions on the functions $u_0,\ldots,u_n$. Such a situation is presented in Corollary~\ref{cor:af_klartag}.\dssymb
\end{remark}

\begin{lemma}
\label{le:s_vanish}
If $u_1,\ldots,u_n\in\fconvz$ are such that $u_i\vert_{\bd(\dom(u_i))}\equiv 0$ for $1\leq i\leq n$, then $\Sa(u_1,\ldots,u_n;\cdot)\equiv 0$.
\end{lemma}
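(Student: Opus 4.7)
The plan is to reduce the mixed statement to the (trivial) single-function version and then extract every mixed coefficient from the polynomial expansion \eqref{eq:mixed_meas_fconvz}. For a single $u \in \fconvz$ with $u|_{\bd(\dom(u))} \equiv 0$, the push-forward definition \eqref{eq:s_pushforward} gives $\Sa(u;\cdot) \equiv 0$ immediately, since the density $|u|$ vanishes on the support $\bd(\dom(u))$; the lower-dimensional case behaves the same way via \eqref{eq:sa_lower_dim}.

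The main preservation step I would prove is the following: if each $u_i \in \fconvz$ satisfies $u_i|_{\bd(\dom(u_i))} \equiv 0$ and $\lambda_1, \ldots, \lambda_n > 0$, then $u := (\lambda_1 \sq u_1) \infconv \cdots \infconv (\lambda_n \sq u_n)$ lies in $\fconvz$ and vanishes on $\bd(\dom(u))$. Compactness of $\dom(u) = \sum_i \lambda_i \dom(u_i)$ is automatic, and for a boundary point $x \in \bd(\dom(u))$ I would invoke the standard Minkowski-summand fact: any decomposition $x = \sum_i \lambda_i x_i$ with $x_i \in \dom(u_i)$ must have each $x_i \in \bd(\dom(u_i))$, since an interior summand would supply an open neighborhood of $x$ in $\dom(u)$. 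The infimum in $u(x) = \inf \sum_i \lambda_i u_i(x_i)$ is attained by lower semicontinuity on the compact constraint set, and any attaining tuple yields $u_i(x_i) = 0$, so $u(x) = 0$.

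Combining the two steps, $\Sa(u; \cdot) \equiv 0$ for every choice of positive $\lambda_i$. Expanding this via \eqref{eq:mixed_meas_fconvz} and evaluating on an arbitrary Borel set $\omega \subset \sn$ gives the polynomial identity
\[
0 = \sum_{i_1, \ldots, i_n = 1}^n \lambda_{i_1} \cdots \lambda_{i_n}\, \Sa(u_{i_1}, \ldots, u_{i_n}; \omega)
\]
on $(0,\infty)^n$. The closing input is nonnegativity of every coefficient: by \eqref{eq:mixed_s_s_sN0} each $\Sa(u_{i_1}, \ldots, u_{i_n}; \cdot)$ is (half of) a mixed area measure restricted to $\sN_0$, hence nonnegative. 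A polynomial with nonnegative coefficients that vanishes identically on a positive orthant must have all coefficients equal to zero, so $\Sa(u_1, \ldots, u_n; \omega) = 0$ for every Borel $\omega$, as desired. The only genuinely delicate point I anticipate is the Minkowski-summand boundary claim used in the preservation step; everything else is bookkeeping around the definitions and the expansion formula.
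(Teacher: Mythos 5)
Your proposal is correct and follows essentially the same route as the paper: vanishing of $\Sa(u;\cdot)$ for a single function via \eqref{eq:s_pushforward} and \eqref{eq:sa_lower_dim}, preservation of the boundary condition under infimal convolutions of epi-multiples (the Minkowski-summand argument you sketch is the right justification for the step the paper leaves implicit), and then the polynomial expansion \eqref{eq:mixed_meas_fconvz}. Your extraction of the coefficients via non-negativity is a harmless variant of the standard identification of coefficients of a polynomial vanishing on an open set.
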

\begin{proof}
Consider first $u \in \fconvz$. If $u\vert_{\bd(\dom(u))}\equiv 0$, then \eqref{eq:s_pushforward} and \eqref{eq:sa_lower_dim} show that $\Sa(u;\cdot)\equiv 0$. Now if $u_1,\ldots,u_m\in\fconvz$ are such that $u_i\vert_{\bd(\dom(u_i))}\equiv 0$ for $1\leq i\leq m$, then also $(\lambda_1\sq u_1) \infconv \cdots \infconv (\lambda_m\sq u_m)\in\fconvz$ vanishes on the boundary of its domain for $\lambda_1,\ldots,\lambda_m\geq 0$. Therefore, the result follows from \eqref{eq:mixed_meas_fconvz} together with the first part of the proof.
\end{proof}

By \eqref{eq:def_ovb}, \eqref{eq:af_ovb}, Proposition~\ref{prop:rep_ovb}, and Lemma~\ref{le:s_vanish}, we immediately obtain the following inequality for integrals with respect to (conjugate) Monge--Amp\`ere measures, which was essentially shown previously by Klartag \cite[Theorem 1.3]{Klartag_07} (cf.\ Remark~\ref{re:klartag_af} below).

\begin{corollary}
\label{cor:af_klartag}
If $u_0,\ldots,u_n\in\fconvz$ are such that $u_i\vert_{\bd(\dom(u_i))}\equiv 0$ for $0\leq i\leq n$, then
\begin{multline*}
\left(\int_{\Rn} u_0^*(x)\d\MAp(u_1,\ldots,u_n;x)\right)^2\\
\geq
\int_{\Rn} u_0^*(x)\d\MAp(u_1,u_1,u_3,\ldots,u_n;x)
\int_{\Rn} u_0^*(x)\d\MAp(u_2,u_2,u_3,\ldots,u_n;x).
\end{multline*}
In addition, under the above assumptions, the map
\[
(u_0,\ldots,u_n)\mapsto \int_{\Rn} u_0^*(x) \d\MAp(u_1,\ldots,u_n;x)
\]
is symmetric in its entries.
\end{corollary}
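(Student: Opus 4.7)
The plan is to combine the representation of $\oVb$ given by Proposition~\ref{prop:rep_ovb} with the Alexandrov--Fenchel-type inequality \eqref{eq:af_ovb} for $\oVb$, using the boundary hypothesis to kill the surface term in the representation.

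First, I would observe that by Lemma~\ref{le:s_vanish}, the assumption $u_i\vert_{\bd(\dom(u_i))}\equiv 0$ for every $0\le i\le n$ implies that each mixed measure $\Sa(u_{j_1},\ldots,u_{j_n};\cdot)$ with indices $j_1,\ldots,j_n\in\{0,\ldots,n\}$ vanishes identically. Hence Proposition~\ref{prop:rep_ovb}, combined with the symmetry of $\oVb$ inherited from the symmetry of the mixed volume via \eqref{eq:def_ovb}, yields the clean identity
\[
\oVb(u_{i_0},u_{i_1},\ldots,u_{i_n})=\frac{1}{n+1}\int_{\Rn} u_{i_0}^*(x)\d\MAp(u_{i_1},\ldots,u_{i_n};x)
\]
for every permutation $(i_0,i_1,\ldots,i_n)$ of $(0,1,\ldots,n)$. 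The symmetry claim of the corollary follows at once: fixing $i_0=0$, the right-hand side equals $(n+1)\,\oVb(u_0,\ldots,u_n)$, which is symmetric in all $n+1$ entries, so in particular in the last $n$.

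For the inequality, I would apply \eqref{eq:af_ovb} in a permuted form. Using the symmetry of $\oVb$, the inequality can be invoked with $u_1$ and $u_2$ in the two distinguished positions and with $u_0,u_3,\ldots,u_n$ as the common entries, which gives
\[
\oVb(u_0,u_1,u_2,u_3,\ldots,u_n)^2\geq \oVb(u_0,u_1,u_1,u_3,\ldots,u_n)\,\oVb(u_0,u_2,u_2,u_3,\ldots,u_n).
\]
Rewriting each of the three mixed functionals via the identity from the first step, always placing $u_0$ in the first slot so that $u_0^*$ becomes the integrand and $u_0$ is excluded from the measure, transforms this inequality exactly into the statement of the corollary.

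The entire argument is essentially bookkeeping once Proposition~\ref{prop:rep_ovb}, Lemma~\ref{le:s_vanish}, and \eqref{eq:af_ovb} are available. The only delicate point is ensuring that the surface term in Proposition~\ref{prop:rep_ovb} can be discarded in every configuration that appears during the reduction; this is guaranteed because the boundary-vanishing hypothesis is imposed on every $u_i$, so Lemma~\ref{le:s_vanish} applies to the $n$ entries that feed the mixed measure in each term.
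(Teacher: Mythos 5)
Your proposal is correct and follows exactly the paper's route: the corollary is deduced from \eqref{eq:def_ovb}, \eqref{eq:af_ovb}, Proposition~\ref{prop:rep_ovb}, and Lemma~\ref{le:s_vanish}, with the boundary hypothesis killing the surface term so that each mixed functional reduces to the Monge--Amp\`ere integral. The permutation bookkeeping you spell out is precisely what the paper leaves implicit.
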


\begin{remark}
\label{re:klartag_af}
Let us point out that Klartag's inequality in \cite[Theorem 1.3]{Klartag_07} is formulated in terms of compactly-supported, continuous functions $f_0,\ldots,f_n$ on $\Rn$ that are concave on their supports with additional $C^2$ assumptions on their modified Legendre transforms. Apart from the additional regularity assumptions, the functions $f_i$, $0\leq i\leq n$, play the role of the functions $-u_i$ from Corollary~\ref{cor:af_klartag}. In fact, our proof of Corollary~\ref{cor:af_klartag} is based on the same idea as the proof of \cite[Theorem 1.3]{Klartag_07}. However, using (conjugate) mixed Monge--Amp\`ere measures allows us to efficiently circumvent additional regularity requirements. Finally, it should be noted that Corollary~\ref{cor:af_klartag} can also be obtained from \cite[Theorem 1.3]{Klartag_07} by an approximation argument using continuity properties of the measures $\MAp$ (see, for example, \cite[Theorem 5.1]{Colesanti-Ludwig-Mussnig-7}).\dssymb
\end{remark}

\subsection{Shadow Systems}
\label{se:shadow_sytems}
For $z\in\sN$, recall that the \textit{Steiner symmetral} of a convex body $K\in\KN$ in direction $z$ is given by
\[
S_{z} K = \Big\{x+\lambda z : x\in\proj_{z^\perp} K, -\tfrac 12 V_1\big(K\cap (x+z\R)\big) \leq \lambda \leq \tfrac 12 V_1\big(K\cap (x+z\R)\big)\Big\}.
\]
The set $S_{z} K$ is again a convex body and $V_{n+1}(S_{z} K) = V_{n+1}(K)$. Furthermore, it is possible to find a sequence of iterated Steiner symmetrals that converges to a centered Euclidean ball with the same volume as $K\in\KN$. Notably, it was shown by Klain in \cite[Corollary 5.4]{Klain_2012} that it is possible to choose a sequence of directions granting convergence to a centered Euclidean ball independently of the set $K\in\KN$ one starts from (see also \cite{Bianchi_Gardner_Gronchi_IUMJ22}). Applying this result in $n$-dimensional space shows that we can iterate Steiner symmetrizations with respect to directions chosen only from $\sN_0$, thereby simultaneously symmetrizing the convex bodies $K\cap\{x\in\RN: \langle x,e_{n+1}\rangle = s\}$ with $s\in[-h_K(-e_{n+1}),h_K(e_{n+1})]$. The sequence obtained by this procedure converges to the unique body $\bar{K}\in\KN$ whose sections parallel to $e_{n+1}^\perp$ are centered Euclidean balls such that
\[
V_n\big(\bar{K}\cap \{x\in\RN : \langle x,e_{n+1}\rangle = s\}\big) = V_n\big(K \cap \{x\in\RN : \langle x,e_{n+1}\rangle = s\}\big)
\]
for every $s\in\R$. We summarize this consequence of \cite[Corollary 5.4]{Klain_2012} in the following result.
\begin{proposition}
\label{prop:steiner_sequence}
There exists a sequence $z_j\in \sN_0$, $j\in\N$, such that for every $K\in\KN$ the sequence
\[
S_{z_k} \cdots S_{z_1} K, \quad k\in\N,
\]
converges to $\bar{K}$.
\end{proposition}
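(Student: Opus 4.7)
The idea is to lift Klain's universal Steiner symmetrization theorem from $\Rn$ to $\RN$, exploiting that a Steiner symmetrization whose direction lies in the equator $\sN_0$ preserves the $(n+1)$-th coordinate and therefore acts independently on each horizontal slice.

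First, applying \cite[Corollary 5.4]{Klain_2012} in $n$-dimensional ambient space, I would fix a sequence $z_j$ in $\sn$, $j\in\N$, such that for every $L\in\Kn$ the iterated symmetrals $S_{z_k}\cdots S_{z_1} L$ converge in the Hausdorff metric to the origin-centered Euclidean ball of $n$-volume $V_n(L)$. Identifying $\sn$ with $\sN_0$, I view this sequence as lying in $\sN_0\subset\sN$. Next, since each $z_j$ is perpendicular to $e_{n+1}$, the symmetrization $S_{z_j}$ on $\KN$ preserves every affine hyperplane $H_s=\{x\in\RN:\langle x,e_{n+1}\rangle=s\}$ and, under the natural identification $H_s\cong\Rn$, restricts to the $n$-dimensional Steiner symmetrization in direction $z_j$. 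Consequently, for every $K\in\KN$ and every $s\in\R$,
\[
(S_{z_k}\cdots S_{z_1} K)\cap H_s \;=\; S_{z_k}\cdots S_{z_1}(K\cap H_s),
\]
and Klain's theorem yields that the right-hand side converges in the Hausdorff metric to the centered Euclidean ball in $H_s$ with $n$-volume $V_n(K\cap H_s)$, which by construction equals $\bar K\cap H_s$.

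To promote slice-wise convergence to Hausdorff convergence in $\KN$, I would argue by compactness. All iterates are contained in any origin-centered Euclidean ball $B$ containing $K$, since $B$ is invariant under any Steiner symmetrization whose hyperplane of symmetry passes through the origin. Hence the sequence is relatively compact in $\KN$ by Blaschke's selection theorem. Let $K^\star$ be any Hausdorff sub-sequential limit; for $s$ in the interior of the common height range $[-h_K(-e_{n+1}),h_K(e_{n+1})]$ (which is preserved under horizontal Steiner symmetrization, so also equals the height range of every iterate and of $\bar K$), the slice $K^\star\cap H_s$ is the Hausdorff limit of the corresponding slices of the iterates, forcing $K^\star\cap H_s=\bar K\cap H_s$. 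By closedness this equality extends to the two extremal values of $s$ as well, whence $K^\star=\bar K$ and the entire sequence converges to $\bar K$.

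The main obstacle I foresee is the last step, that is, passing from pointwise-in-$s$ Hausdorff convergence of the slices to Hausdorff convergence of the ambient bodies in $\KN$. Klain's result gives convergence of each horizontal slice separately, but not uniformly in $s$, so one genuinely needs the Blaschke selection argument together with the continuity of slicing convex bodies at full-dimensional heights; the two extreme heights, where the slice may collapse, are pinned down by the fact that the vertical extent of the body is preserved by horizontal Steiner symmetrizations. Everything else reduces to the bookkeeping observation that horizontal Steiner symmetrization commutes with horizontal slicing.
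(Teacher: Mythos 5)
Your argument is correct and follows the same route as the paper, which states this proposition as a direct consequence of Klain's universal-sequence theorem applied slice-wise via Steiner symmetrizations in equatorial directions. Your Blaschke-selection step, upgrading slice-wise convergence to Hausdorff convergence in $\KN$ and handling the extremal heights separately, supplies detail that the paper leaves implicit.
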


A far-reaching generalization of Steiner symmetrization is given by \textit{shadow systems}, introduced by Shephard in \cite{shephard_shadow}. For this we identify $\RN=E=e_{n+2}^\perp\subset \R^{n+2}$ and denote by $\pi_y\colon \R^{n+2}\to E$ the \textit{projection} parallel to the line spanned by $y\in\R^{n+2}$ (which, in general, is not an orthogonal projection). For a convex body $K\in\K^{n+2}$, the shadow system induced by $K$ is now defined as
\[
K(x)=\pi_{(x,1)} K\in\KN,\qquad x\in\RN.
\]
The following result is due to \cite[Section 2]{shephard_shadow} (see also \cite[Theorem 10.4.1]{Schneider_CB}).
\begin{theorem}
\label{thm:shadow_sys_v_convex}
If $x\mapsto K_1(x),\ldots,K_{n+1}(x)$ are shadow systems, then the function
\[
x\mapsto V(K_1(x),\ldots,K_{n+1}(x))
\]
is convex.
\end{theorem}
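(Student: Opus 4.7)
The strategy is to deduce the convexity from the corresponding (simpler) volume statement for a single shadow system through the multilinearity of mixed volumes. The starting identity is an explicit formula for the support function of a shadow system: if $K(x)=\pi_{(x,1)}\widetilde K$ for some $\widetilde K\in\K^{n+2}$, then unwinding the definition of the projection gives
\[
h_{K(x)}(\nu) \,=\, \sup_{(a,s)\in\widetilde K}\bigl(\langle a,\nu\rangle-s\langle x,\nu\rangle\bigr) \,=\, h_{\widetilde K}(\nu,-\langle x,\nu\rangle), \qquad \nu\in\sN.
\]
Since $x\mapsto(\nu,-\langle x,\nu\rangle)$ is affine and $h_{\widetilde K}$ is convex, this shows that $x\mapsto h_{K(x)}(\nu)$ is convex for every fixed $\nu$.

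Next, the class of shadow systems in a common direction is closed under Minkowski combinations: by linearity of $\pi_{(x,1)}$, if $K_i(x)=\pi_{(x,1)}\widetilde K_i$ and $\lambda_i\geq 0$, then $\sum_i\lambda_i K_i(x)=\pi_{(x,1)}\bigl(\sum_i\lambda_i\widetilde K_i\bigr)$ is again a shadow system. Combined with Shephard's main lemma---that $V_{n+1}(K(x))$ is convex in $x$ for every individual shadow system---this immediately yields that $x\mapsto V_{n+1}\bigl(\sum_i\lambda_i K_i(x)\bigr)$ is convex for each fixed non-negative $\lambda$. Expanding in the variables $\lambda$ produces a polynomial of degree $n+1$ whose coefficients are, up to multinomial factors, the mixed volumes $V(K_1(x)[\alpha_1],\ldots,K_{n+1}(x)[\alpha_{n+1}])$; in particular, the coefficient of $\lambda_1\cdots\lambda_{n+1}$ is exactly $(n+1)!\,V(K_1(x),\ldots,K_{n+1}(x))$.

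The main obstacle is transferring convexity from the full polynomial to the individual coefficient of interest: the naive polarization, which expresses $(n+1)!\,V(K_1,\ldots,K_{n+1})$ as the alternating sum $\sum_{I\subseteq\{1,\ldots,n+1\}}(-1)^{n+1-|I|}V_{n+1}\bigl(\sum_{i\in I}K_i(x)\bigr)$, is unhelpful because alternating sums of convex functions need not be convex. Shephard's remedy is an inductive argument that introduces the $x$-dependence of the entries one at a time. The base case, in which only $K_1$ varies, follows from the integral representation $V(K_1,\ldots,K_{n+1})=\tfrac{1}{n+1}\int_{\sN}h_{K_1}\,dS(K_2,\ldots,K_{n+1},\cdot)$ together with the convexity in $x$ of the integrand established above. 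The general case is then obtained by successively absorbing the remaining shadow systems into the mixed area measure via Minkowski perturbations, exploiting the closure property together with an approximation argument. The main lemma itself is proved directly from the slice representation $K(x)=\bigcup_s(\widetilde K_s-sx)$, which in dimension one already makes the convexity transparent: $V_1(K(x))$ is the sum of two pointwise suprema of affine functions of $x$, and the higher-dimensional case follows by a parallel slice-by-slice computation.
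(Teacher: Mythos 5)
First, note that the paper does not prove this statement itself---it is quoted from Shephard's 1964 paper and from Schneider's book---so the comparison here is with the classical argument. Your preliminary steps are all correct: the formula $h_{K(x)}(\nu)=h_{\widetilde K}(\nu,-\langle x,\nu\rangle)$ and the resulting convexity of $x\mapsto h_{K(x)}(\nu)$; the closure of shadow systems under Minkowski combinations; the slicing proof that $V_{n+1}(K(x))$ is convex (each chord length in the direction of the movement is the sum of two suprema of affine functions of $x$); and the ``base case'' in which only one entry varies, which indeed follows from $V(K_1(x),L_2,\dots,L_{n+1})=\tfrac{1}{n+1}\int h_{K_1(x)}\d S(L_2,\dots,L_{n+1},\cdot)$ because there the measure is fixed and non-negative. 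You also correctly diagnose that polarization destroys convexity. The gap is the inductive step. ``Successively absorbing the remaining shadow systems into the mixed area measure via Minkowski perturbations'' is not an argument: as soon as a second entry varies, the measure $S(K_2(x),\dots,K_{n+1}(x),\cdot)$ depends on $x$, and an integral of a convex integrand against an $x$-dependent measure has no reason to be convex; moreover, any attempt to isolate $V(K_1(x),K_2(x),\dots)$ from quantities you already control (say from $V(K_1(x)+\varepsilon K_2(x),\,\cdot\,,\dots)$ or from the expansion of $V_{n+1}(\sum_i\lambda_iK_i(x))$) reintroduces exactly the alternating signs you ruled out. So the heart of the theorem remains unproved.

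The classical mechanism is different and collapses everything to your base case. Write $\pi_{(x,1)}=\pi_{e_{n+2}}\circ A_x$, where $A_x(a,s)=(a-sx,s)$ is a volume-preserving shear of $\R^{n+2}$ and $\pi_{e_{n+2}}$ is the orthogonal projection onto $E=e_{n+2}^\perp$. The projection formula for mixed volumes with a segment entry gives
\[
V\big(K_1(x),\dots,K_{n+1}(x)\big)=(n+2)\,V^{(n+2)}\big(A_x\widetilde K_1,\dots,A_x\widetilde K_{n+1},[o,e_{n+2}]\big),
\]
where $V^{(n+2)}$ is the mixed volume in $\R^{n+2}$, and since mixed volumes transform with the factor $|\det A_x|=1$ under linear maps and $A_x^{-1}e_{n+2}=(x,1)$, this equals $(n+2)\,V^{(n+2)}(\widetilde K_1,\dots,\widetilde K_{n+1},[o,(x,1)])$. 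The entire $x$-dependence now sits in a single segment entry, and
\[
V^{(n+2)}\big(\widetilde K_1,\dots,\widetilde K_{n+1},[o,(x,1)]\big)=\frac{1}{n+2}\int_{\mathbb{S}^{n+1}}\big(\langle(x,1),\nu\rangle\big)^{+}\d S\big(\widetilde K_1,\dots,\widetilde K_{n+1},\nu\big)
\]
is convex in $x$ because $x\mapsto(\langle(x,1),\nu\rangle)^{+}$ is convex for each $\nu$ and the mixed area measure is non-negative. Replacing your unproved induction by this reduction yields a complete proof.
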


A consequence of Theorem~\ref{thm:shadow_sys_v_convex} is that for $1\leq m\leq {n+1}$ the expression
\[
V(K[m],K_{m+1},\ldots,K_{n+1})
\]
with $K_{m+1},\ldots,K_{n+1}\in\KN$, is non-increasing under Steiner symmetrization of $K\in\KN$. See also \cite[Section 10.4]{Schneider_CB}.

Considering shadow systems that preserve symmetry with respect to $H=e_{n+1}^\perp\subset \RN$, that is $\refl_H(K(x))=K(x)$ for $x\in\RN$, we immediately obtain the following corollary of \eqref{eq:def_ovb} together with Theorem~\ref{thm:shadow_sys_v_convex}.

\begin{corollary}
\label{cor:shadow_sys_v_floor_k_convex}
If $x\mapsto K_1(x),\ldots,K_{n+1}(x)$ are shadow systems such that $K_i(x)\in\KNH$ for $1\leq i\leq n+1$, then the function
\[
x\mapsto \oVb(\lfloor K_1(x)\rfloor, \ldots, \lfloor K_{n+1}(x)\rfloor)
\]
is convex.
\end{corollary}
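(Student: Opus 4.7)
The plan is to observe that this corollary reduces almost immediately to Theorem~\ref{thm:shadow_sys_v_convex} together with the definition \eqref{eq:def_ovb} of $\oVb$. The crucial ingredient is the correspondence established in Section~\ref{suse:bodies_fcts}: the map $K\mapsto \lfloor K\rfloor$ is the inverse of $u\mapsto K^u$ when restricted to $\KNH$ and $\fconvz$ respectively, so that relation \eqref{eq:kfloork} yields $K^{\lfloor K_i(x)\rfloor}=K_i(x)$ for every $x\in\RN$ and every $1\leq i\leq n+1$ under the standing assumption $K_i(x)\in\KNH$.

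First, I would invoke the assumption that each $K_i(x)$ lies in $\KNH$ to conclude that $\lfloor K_i(x)\rfloor\in\fconvz$ for every $x\in\RN$, so that the expression $\oVb(\lfloor K_1(x)\rfloor,\ldots,\lfloor K_{n+1}(x)\rfloor)$ is well-defined. Applying the definition \eqref{eq:def_ovb} of $\oVb$ combined with \eqref{eq:kfloork} then gives
\[
\oVb(\lfloor K_1(x)\rfloor,\ldots,\lfloor K_{n+1}(x)\rfloor)=\tfrac{1}{2}\,V\bigl(K^{\lfloor K_1(x)\rfloor},\ldots,K^{\lfloor K_{n+1}(x)\rfloor}\bigr)=\tfrac{1}{2}\,V(K_1(x),\ldots,K_{n+1}(x)).
\]

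Finally, since $x\mapsto K_1(x),\ldots,K_{n+1}(x)$ are shadow systems by hypothesis, Theorem~\ref{thm:shadow_sys_v_convex} asserts that the map $x\mapsto V(K_1(x),\ldots,K_{n+1}(x))$ is convex, and multiplying by the positive constant $\tfrac{1}{2}$ preserves convexity. There is essentially no obstacle here beyond verifying that the hypothesis $K_i(x)\in\KNH$ truly places $\lfloor K_i(x)\rfloor$ inside $\fconvz$, which is immediate from the discussion preceding \eqref{eq:ku_floor}.
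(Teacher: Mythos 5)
Your proposal is correct and follows exactly the route the paper intends: the identity $K^{\lfloor K_i(x)\rfloor}=K_i(x)$ from \eqref{eq:kfloork} turns $\oVb(\lfloor K_1(x)\rfloor,\ldots,\lfloor K_{n+1}(x)\rfloor)$ into $\tfrac12 V(K_1(x),\ldots,K_{n+1}(x))$, and Theorem~\ref{thm:shadow_sys_v_convex} then gives convexity. The paper presents this as an immediate consequence of \eqref{eq:def_ovb} and Theorem~\ref{thm:shadow_sys_v_convex} without further argument, so there is nothing to add.
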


A particular instance of shadow systems that preserve symmetry with respect to $H$ is obtained if we consider Steiner symmetrization in a direction $z\in\sN_0$. This can be expressed in the form of \textit{parallel chord movement} for $K\in\KN$. For this, let $f,g\colon \Rn\to\R$ be such that
\[
K=\{x+\lambda z: x\in \proj_{z^\perp} K, g(x)\leq \lambda \leq f(x)\}
\]
and set $h(x)=-(f(x)+g(x))$ for $x\in\proj_{z^\perp} K$. For $t\in [0,1]$ we now set
\begin{equation}
\label{eq:K_z_t}
K_z(t)=\big\{y+ h(\proj_{z^\perp} y) t z : y\in K\big\}
\end{equation}
and observe that $K_z(0)=K$, $K_z(1)=\refl_{z^\perp} K$, and $K_z(\frac 12)=S_z K$ (cf.\ \cite{Rogers_Shephard}). As illustrated in \cite[Section 10.4]{Schneider_CB}, one can find a body $\tilde{K}\in \K^{n+2}$ such that $K_z(t)=\pi_{(-tz,1)} \tilde{K}$ and thus, the statement of Theorem~\ref{thm:shadow_sys_v_convex} holds true for the systems $(K_i)_z(t)$, $1\leq i \leq n+1$. Furthermore, it immediately follows from \eqref{eq:K_z_t} that if $K\in\KNH$ and $z\in\sN_0$, then $K_z(t)\in\KNH$ for every $t\in[0,1]$. Thus, for $u\in\fconvz$ and $z\in\sn=\sN_0$ we can define
\begin{equation}
\label{eq:u_z_t}
u_{z,t}=\lfloor (K^u)_z(t)\rfloor
\end{equation}
for $t\in[0,1]$. By \eqref{eq:ref_Ku} and \eqref{eq:boundary_map}, this is equivalent to
\begin{equation}
\label{eq:u_z_t_lvl_sets}
\{u_{z,t}\leq s\} = (\{u \leq s\})_z(t)
\end{equation}
for $s\geq \min_{x \in \dom(u)} u(x)$, which means that $t\mapsto u_{z,t}$ only acts on the level sets of $u$. In particular, \eqref{eq:u_z_t_lvl_sets} provides a canonical extension of the definition of $u_{z,t}$ to $u\in\fconvs$. Let us also remark that notions of shadow systems on function spaces are not new. See, for example, \cite{Pivovarov_Rebollo}. See also \cite{Hoehner_sym}.

In order to take care of boundary terms, we need the following property.
\begin{lemma}
\label{le:u_z_t_bd}
If $u\in\fconvz$ is such that $u\vert_{\bd(\dom(u))}\equiv 0$, then also $u_{z,t}\vert_{\bd(\dom(u_{z,t}))}\equiv 0$ for every $z\in\sn$ and $t\in[0,1]$.
\end{lemma}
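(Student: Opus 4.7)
The plan is to reduce the assertion to a monotonicity property of parallel chord movement on convex bodies, exploiting the level-set characterization \eqref{eq:u_z_t_lvl_sets} throughout. I begin by noting that since $u\leq 0$ on $\dom(u)$ and $u\vert_{\bd(\dom(u))}\equiv 0$, no boundary point of $\dom(u)$ can belong to $\{u\leq s\}$ when $s<0$, so
\[
\{u\leq s\}\subset\interior(\dom(u))\qquad\text{for every }s<0.
\]
Taking $s=0$ in \eqref{eq:u_z_t_lvl_sets} together with $u_{z,t}\leq 0$ (inherited from $u$) also gives $\dom(u_{z,t})=(\dom(u))_z(t)$.

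The technical heart of the argument is the following monotonicity lemma: if $A,B\in\Kn$ satisfy $A\subset\interior(B)$, then $A_z(t)\subset\interior(B_z(t))$ for every $z\in\sn$ and $t\in[0,1]$. I would prove this by decomposing $\Rn=z^\perp\oplus\R z$, describing $A$ and $B$ via their upper and lower chord functions $f_A,g_A$ and $f_B,g_B$, and observing that \eqref{eq:K_z_t} identifies the chord functions of $A_z(t)$ and $B_z(t)$ with the convex combinations $(1-t)f-tg$ and $-tf+(1-t)g$. The hypothesis $A\subset\interior(B)$ implies $\proj_{z^\perp}(A)\subset\interior(\proj_{z^\perp}(B))$ together with the strict inequalities $g_B<g_A$ and $f_A<f_B$ on $\proj_{z^\perp}(A)$, and a direct computation transfers these strict inequalities to the chord functions of $A_z(t)$ and $B_z(t)$ for every $t\in[0,1]$.

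Applying the lemma with $A=\{u\leq s\}$ and $B=\dom(u)$ and invoking \eqref{eq:u_z_t_lvl_sets} yields $\{u_{z,t}\leq s\}\subset\interior(\dom(u_{z,t}))$ for every $s<0$. Thus a point $x\in\bd(\dom(u_{z,t}))$ cannot lie in any strict sublevel set $\{u_{z,t}\leq s\}$ with $s<0$, so $u_{z,t}(x)\geq 0$; combined with $u_{z,t}(x)\leq 0$ this forces $u_{z,t}(x)=0$. The main obstacle I anticipate is the verification of the monotonicity lemma, which is elementary but requires some care when $A$ is not full-dimensional, and a separate (trivial) treatment of the degenerate case where $\dom(u)$ has empty interior—in which case the hypothesis already forces $u=\ind_{\dom(u)}$ and the conclusion follows directly from $u_{z,t}=\ind_{(\dom(u))_z(t)}$.
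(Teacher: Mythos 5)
Your proposal is correct and follows essentially the same route as the paper: the paper also reduces the claim, via the level-set identity \eqref{eq:u_z_t_lvl_sets}, to the fact that the parallel chord movement \eqref{eq:K_z_t} preserves strict inclusions of convex sets (the paper uses the nested family $\{u\leq s_1\}\subset\interior\{u\leq s_2\}$, you use only $\{u\leq s\}\subset\interior(\dom(u))$, which suffices). The only difference is that the paper asserts the preservation of strict inclusions without proof, whereas you sketch a correct verification via the chord functions $(1-t)f-tg$ and $-tf+(1-t)g$.
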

\begin{proof}
Let $u\in\fconvz$ and let $m=\min_{x\in\dom(u)} u(x)$. The condition $u\vert_{\bd(\dom(u))}\equiv 0$ is equivalent to
\[
\{u\leq s_1\} \subset \interior\{u\leq s_2\}
\]
for every $m\leq s_1 < s_2 \leq 0$. By \eqref{eq:u_z_t_lvl_sets} together with the fact that the system defined by \eqref{eq:K_z_t} preserves strict inclusions of sets, we obtain
\[
\{u_{z,t}\leq s_1\} \subset \interior\{u_{z,t}\leq s_2\}
\]
for every $m\leq s_1 < s_2 \leq 0$ and $z\in \sn$, and thus $u_{z,t}\vert_{\bd(\dom(u_{z,t}))}\equiv 0$.
\end{proof}

\begin{theorem}
\label{thm:mixed_int_map_convx}
Let $u_0\in\fconvz$ and let $u_1,\ldots,u_n\in\fconvcd$. If for every $1\leq i\leq n$ there exists $c_i\in\R$ such that $u_i\vert_{\bd(\dom(u_i))}\equiv c_i$, then
\[
t\mapsto \int_{\Rn} (u_0)_{z,t}^*(x)\d\MAp\big((u_1)_{z,t},\ldots,(u_n)_{z,t};x\big),\quad t\in [0,1]
\]
is convex.
\end{theorem}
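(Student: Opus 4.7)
The plan is to identify the integral with a mixed functional $\oVb$ and then invoke the shadow system convexity result (Corollary~\ref{cor:shadow_sys_v_floor_k_convex}).

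First, since each $u_i$ ($1\leq i\leq n$) is convex with compact domain and equal to $c_i$ on $\bd(\dom(u_i))$, convexity forces $u_i\leq c_i$ on $\dom(u_i)$, so $u_i-c_i\in\fconvz$ and $(u_i-c_i)\vert_{\bd(\dom(u_i-c_i))}\equiv 0$. I would next check the compatibility
\[
(u_i-c_i)_{z,t}=(u_i)_{z,t}-c_i,
\]
which follows directly from the level set description \eqref{eq:u_z_t_lvl_sets}: for any admissible height $s$,
\[
\{(u_i-c_i)_{z,t}\leq s\}=(\{u_i\leq s+c_i\})_z(t)=\{(u_i)_{z,t}\leq s+c_i\}.
\]
Combined with the invariance of the conjugate mixed Monge--Amp\`ere measure under vertical translations, this gives
\[
\MAp\bigl((u_1)_{z,t},\ldots,(u_n)_{z,t};\cdot\bigr)=\MAp\bigl((u_1-c_1)_{z,t},\ldots,(u_n-c_n)_{z,t};\cdot\bigr).
\]

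With $v_0=(u_0)_{z,t}\in\fconvz$ and $v_i=(u_i-c_i)_{z,t}\in\fconvz$ for $1\leq i\leq n$, Lemma~\ref{le:u_z_t_bd} ensures $v_i\vert_{\bd(\dom(v_i))}\equiv 0$, so Lemma~\ref{le:s_vanish} kills the boundary measure: $\Sa(v_1,\ldots,v_n;\cdot)\equiv 0$. Hence Proposition~\ref{prop:rep_ovb} reduces to
\[
\int_{\Rn} (u_0)_{z,t}^*(x)\d\MAp\bigl((u_1)_{z,t},\ldots,(u_n)_{z,t};x\bigr)=(n+1)\,\oVb(v_0,v_1,\ldots,v_n).
\]

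By definition \eqref{eq:def_ovb}, the right-hand side equals $\tfrac{n+1}{2}V(K^{v_0},K^{v_1},\ldots,K^{v_n})$. Since $u_0\in\fconvz$ and each $u_i-c_i\in\fconvz$, formula \eqref{eq:u_z_t} gives $K^{v_0}=(K^{u_0})_z(t)$ and $K^{v_i}=(K^{u_i-c_i})_z(t)$, all contained in $\KNH$. These are the parallel chord movements described after \eqref{eq:K_z_t}, which are genuine shadow systems. Hence Corollary~\ref{cor:shadow_sys_v_floor_k_convex} (equivalently, Theorem~\ref{thm:shadow_sys_v_convex}) yields convexity of $t\mapsto \oVb(v_0,v_1,\ldots,v_n)$, and this is exactly the claim.

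The only non-routine step is verifying that vertical translations commute with the shadow system operation $(\cdot)_{z,t}$ and that the resulting $v_i$'s still vanish on their boundaries; both follow cleanly from the level-set description \eqref{eq:u_z_t_lvl_sets} together with Lemma~\ref{le:u_z_t_bd}. Once this reduction is in place, the convexity is a direct consequence of Shephard's theorem through our dictionary between $\fconvz$ and $\KNH$.
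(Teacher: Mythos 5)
Your proposal is correct and follows essentially the same route as the paper's proof: reduce to zero boundary values via the compatibility $(u_i-c_i)_{z,t}=(u_i)_{z,t}-c_i$ and translation invariance of $\MAp$, identify the integral with $(n+1)\,\oVb$ using Proposition~\ref{prop:rep_ovb}, Lemma~\ref{le:s_vanish}, and Lemma~\ref{le:u_z_t_bd}, and conclude with Corollary~\ref{cor:shadow_sys_v_floor_k_convex}. Your explicit verification that $u_i\leq c_i$ on $\dom(u_i)$ (so that $u_i-c_i\in\fconvz$) is a detail the paper leaves implicit, but otherwise the arguments coincide.
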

\begin{proof}
Since building the conjugate mixed Monge--Amp\`ere measure is invariant under vertical translations of the functions and since by \eqref{eq:u_z_t_lvl_sets},
\[
(u-c)_{z,t}=u_{z,t}-c
\]
for every $u\in\fconvcd$, $c\in\R$, $z\in\sn$, and $t\in[0,1]$, we may assume that $u_i\vert_{\bd(\dom(u_i))} \equiv 0$ for $1\leq i\leq n$. Thus, by Proposition~\ref{prop:rep_ovb}, Lemma~\ref{le:s_vanish}, and Lemma~\ref{le:u_z_t_bd}, we have
\[
\oVb\big((u_0)_{z,t},\ldots,(u_n)_{z,t}\big) =  \frac{1}{n+1} \int_{\Rn} (u_0)_{z,t}^*(x)\d\MAp\big((u_1)_{z,t},\ldots,(u_n)_{z,t};x\big)
\]
for $z\in\sn$ and $t\in[0,1]$. The statement now follows from Corollary~\ref{cor:shadow_sys_v_floor_k_convex} together with \eqref{eq:u_z_t}.
\end{proof}

For $u\in\fconvs$ let $s_z u$ denote the Steiner symmetral of $u$ in direction $z\in\sn$, which is given by
\begin{equation}
\label{eq:s_z_u}
\{s_z u\leq s\} = S_z \{u\leq s\}
\end{equation}
for $s\in\R$, or equivalently, $s_z u=u_{z,1/2}$. Furthermore, we write $\bar{u}$ for the symmetric rearrangement of $u$, which is the unique convex function such that
\begin{equation}
\label{eq:bar_u}
V_n\big(\{\bar{u}\leq s\}\big)=V_n\big(\{u\leq s\} \big)
\end{equation}
for $s\in\R$ and such that the level sets of $\bar{u}$ are Euclidean balls centered at the origin. Such symmetrizations are well-known; see, for example, \cite{baernstein,kawohl}.

Observe that equation \eqref{eq:s_z_u} shows that $s_z(u+c)=s_z u +c$ for $u\in\fconvs$, $c\in\R$ and $z\in\sn$. Furthermore, it is straightforward to see that if $u\in\fconvz$, then
\begin{equation}
\label{eq:S_z_Ku}
K^{s_z u} = S_z K^u \quad \text{and} \quad K^{\bar{u}}=\overline{K^{u}}.
\end{equation}
In addition, if $u\vert_{\bd(\dom(u))}\equiv 0$, then Lemma~\ref{le:u_z_t_bd} shows that also $s_z u$ has this property, and it is not hard to see that the same is true for $\bar{u}$.

\begin{proposition}
\label{prop:int_ineq_sz_bar}
If $w,u\in\fconvcd$ are such that $u\vert_{\bd(\dom(u))} = c$ for some $c\in\R$, then
\[
\int_{\dom(u)} w^*(\nabla u(x))<\infty,
\]
\[\int_{\dom(u)} \langle y,\nabla u(x)\rangle \d x = 0,
\]
for every $y\in\Rn$, and
\begin{equation*}
\int_{\dom(u)} w^*(\nabla u(x)) \d x \geq \int_{\dom(s_z(u))} (s_z w)^*(\nabla (s_z u)(x))\d x \geq \int_{\dom(\bar{u})} \bar{w}^*(\nabla \bar{u}(x)) \d x
\end{equation*}
for every $z\in\sn$.
\end{proposition}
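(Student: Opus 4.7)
The plan is to set $v := u - c \in \fconvz$, so that $v\vert_{\bd(\dom v)} = 0$, and to read off the three claims from the body $K^v \in \KNH$ attached to $v$ in Section~\ref{suse:bodies_fcts}. For the finiteness I will use that any $w \in \fconvcd$ has finite infimum $m := \inf_{\Rn} w$ (by super-coercivity, properness, and lower semicontinuity) and satisfies $\dom(w) \subseteq R\, B^n$ for some $R > 0$, so $w^* \leq -m + R|\cdot|$ on $\Rn$. The boundary of $K^v$ decomposes into the graphs of $\pm v$ over $\dom(v)$, giving
\[
2\int_{\dom(v)} \sqrt{1 + |\nabla v(x)|^2}\,\d x = \hm^n(\bd K^v) < \infty,
\]
hence $\int_{\dom(v)}|\nabla v|\,\d x < \infty$; combined with any affine lower bound for $w^*$, this yields $\int_{\dom(u)} w^*(\nabla u)\,\d x \in \R$. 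For the vanishing integral I combine Minkowski's closing relation $\int_{\sN} \nu\,\d S_n(K^v;\nu) = 0$ with the symmetry $\refl_H K^v = K^v$ and the fact that $S_n(K^v;\cdot)\vert_{\sN_0} = 0$ (which follows from $v\vert_\bd = 0$ via Lemma~\ref{le:s_vanish}) to obtain $\int_{\sN_-}\nu_i\,\d S_n(K^v;\nu) = 0$ for $1 \leq i \leq n$, and Lemma~\ref{le:int_ku} applied with $\varphi(y) = y_i$ then translates this to $\int_{\dom(v)} \partial_i v\,\d x = 0$.

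\textbf{First inequality in \eqref{eq:int_ineq_sz_bar}.} Because $(w - d)^* = w^* + d$ and $V_n(\dom u) = V_n(\dom s_z u)$, subtracting a constant from $w$ affects both sides equally, so I will first assume $w$ is bounded above on $\dom(w)$ and shift vertically so that $w \in \fconvz$. Applying Theorem~\ref{thm:mixed_int_map_convx} with $u_0 = w$ and $u_1 = \cdots = u_n = v$, the function
\[
\Phi(t) := \int_{\dom(v_{z,t})} (w_{z,t})^*\bigl(\nabla v_{z,t}(x)\bigr)\,\d x, \qquad t \in [0,1],
\]
is convex in $t$. The involution $t \mapsto 1 - t$ corresponds to reflecting both $w$ and $v$ across $z^\perp$, which leaves $\Phi$ invariant, so $\Phi(0) = \Phi(1)$; convexity then forces $\Phi(1/2) \leq \Phi(0)$, which, using $w_{z,1/2} = s_z w$ and $v_{z,1/2} = s_z v$, is the first inequality. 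The case where $w$ may be unbounded near $\bd(\dom w)$ will be handled via the monotone truncation $w_d := w + \ind_{\{w \leq d\}} \in \fconvz - d$: one checks from \eqref{eq:u_z_t_lvl_sets} that $s_z(w_d) = (s_z w)_d$ (and analogously for the symmetric rearrangement), and then passes to the limit by monotone convergence $(w_d)^* \nearrow w^*$ as $d \to \infty$.

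\textbf{Second inequality and the limit to the rearrangement.} By Proposition~\ref{prop:steiner_sequence} I can pick $z_1, z_2, \ldots \in \sN_0 = \sn$ such that $S_{z_k} \cdots S_{z_1} K \to \bar K$ in Hausdorff distance for every $K \in \KN$. Applied simultaneously to $K^{s_z v}$ and $K^{s_z w}$, and using $\overline{K^v} = K^{\bar v}$ together with $\overline{s_z w} = \bar w$, the iterates $v_k := s_{z_k} \cdots s_{z_1} s_z v$ and $w_k := s_{z_k} \cdots s_{z_1} s_z w$ produce bodies $K^{v_k} \to K^{\bar v}$ and $K^{w_k} \to K^{\bar w}$. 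Iterating the first inequality yields a non-increasing sequence; by Proposition~\ref{prop:rep_ovb} together with Lemma~\ref{le:s_vanish} (and Lemma~\ref{le:u_z_t_bd}, which propagates $v_k\vert_\bd = 0$ along the iteration),
\[
\int_{\dom(v_k)}(w_k)^*(\nabla v_k)\,\d x = \frac{n+1}{2}\,V\bigl(K^{w_k}, K^{v_k}, \ldots, K^{v_k}\bigr),
\]
and continuity of the mixed volume in each entry under Hausdorff convergence pushes this through to the limit $\frac{n+1}{2}V(K^{\bar w}, K^{\bar v}, \ldots, K^{\bar v}) = \int_{\dom(\bar u)} \bar w^*(\nabla \bar u)\,\d x$. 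The main technical obstacle throughout will be the reduction to $w \in \fconvz$ when $w$ blows up near $\bd(\dom w)$; the truncation trick works precisely because Steiner symmetrization and symmetric rearrangement commute with truncation at the level of sublevel sets, making monotone convergence available in the final limit for general $w\in\fconvcd$.
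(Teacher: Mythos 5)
Your proof is correct and, for the chain of inequalities, follows essentially the paper's route: reduce to $u\vert_{\bd(\dom(u))}\equiv 0$ and shift $w$ into $\fconvz$, identify the integral with the mixed volume $\tfrac{n+1}{2}V(K^{w},K^{u}[n])$ via Proposition~\ref{prop:rep_ovb} and Lemma~\ref{le:s_vanish}, use shadow-system convexity for the Steiner step, and pass to the symmetric rearrangement with Klain's universal sequence and continuity of mixed volumes. (The paper argues directly at the body level with Theorem~\ref{thm:shadow_sys_v_convex}, while you route through Theorem~\ref{thm:mixed_int_map_convx} and the symmetry $\Phi(t)=\Phi(1-t)$; these are interchangeable, since the latter theorem is itself derived from the former.) Where you genuinely diverge is in the first two claims. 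The paper obtains both as byproducts of the representation \eqref{eq:int_w_nabla_u}: finiteness because the right-hand side is a finite mixed volume, and $\int_{\dom(u)}\langle y,\nabla u\rangle\d x=0$ because a horizontal translation of $w$ translates $K^{w_o}$ within $e_{n+1}^\perp$ and mixed volumes are translation invariant. You instead bound $w^*$ above by $R|\cdot|-m$ and control $\int_{\dom(u)}|\nabla u|\d x$ by the surface area of $K^u$, and you derive the vanishing from Minkowski's relation $\int_{\sN}\nu \d S_n(K^u,\nu)=0$ combined with the $\refl_H$-symmetry and the vanishing of $S_n(K^u,\cdot)$ on $\sN_0$. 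Both are valid; your second argument needs one extra word, since Lemma~\ref{le:int_ku} is stated for bounded $\varphi$ and $\varphi(y)=y_i$ is unbounded, but truncating at $|y|\leq N$ and letting $N\to\infty$ works precisely because $S_n(K^u,\cdot)$ carries no mass near the equator and $\int_{\dom(u)}|\nabla u|\d x<\infty$. Finally, your truncation $w_d=w+\ind_{\{w\leq d\}}$ with monotone convergence is a worthwhile addition: the paper's proof sets $M$ equal to the maximum of $w$ and hence implicitly assumes $w$ bounded on its domain (the unbounded case being deferred to the proof of Theorem~\ref{thm:int_w_nabla_u}), whereas your argument covers all of $\fconvcd$ directly.
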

\begin{proof}
Throughout the proof let $w,u\in\fconvz$ be such that $u\vert_{\bd(\dom(u))}\equiv c$ for some $c\in\R$. Without loss of generality, we may assume that $c=0$ (cf.\ the proof of Theorem~\ref{thm:mixed_int_map_convx}). Furthermore we can write $w=w_o+M$ with $M=\max_{x \in \dom(u)} w(x)$ and $w_o=w-M\in \fconvz$. We now have $w^*=w_o^*-M$, which together with \eqref{eq:def_ovb}, Proposition~\ref{prop:rep_ovb}, and Lemma~\ref{le:s_vanish} gives
\begin{align}
\begin{split}
\label{eq:int_w_nabla_u}
\int_{\dom(u)} w^*(\nabla u(x)) \d x &= \int_{\dom(u)} w_o^*(\nabla u(x)) \d x - M\, V_n(\dom (u))\\
&= \int_{\Rn} w_o^*(x) \d\MAp(u;x) - M\, V_n(\dom(u))\\
&= \frac{n+1}{2}\,V(K^{w_o},K^u[n])-M\,V_n(\dom(u)).
\end{split}
\end{align}
In particular, this implies that $\int_{\dom(u)} w^*(\nabla u(x)) \d x<\infty$ and that $w\mapsto \int_{\dom(u)} w^*(\nabla u(x))\d x$ is invariant under horizontal translations of $w$ (which corresponds to a translation of $K^{w_o}$ in a direction in $e_{n+1}^\perp$). Thus, if for $y\in\Rn$ we set $w_y(x)=w(x-y)$, $x\in\Rn$, then
\[
\int_{\dom(u)}w^*(\nabla u(x)) \d x=\int_{\dom(u)} (w_y)^*(\nabla u(x))\d x = \int_{\dom(u)} \big(w^*(\nabla u(x)) + \langle y,\nabla u(x)\rangle \big) \d x,
\]
and therefore $\int_{\dom(u)} \langle y,\nabla u(x)\rangle \d x = 0$. Moreover, Proposition~\ref{prop:steiner_sequence} and Theorem~\ref{thm:shadow_sys_v_convex}, together with the continuity of mixed volumes show
\[
V(K^{w_o},K^u[n])\geq V(S_z K^{w_o},S_z K^u[n]) \geq V(\bar{K^{w_o}}, \bar{K^u}[n])
\]
for every $z\in \sN_0=\sn$. In addition, by \eqref{eq:s_z_u} and \eqref{eq:bar_u},
\[
V_n(\dom(u)) = V_n(\dom(s_z u)) = V_n(\dom{\bar{u}}).
\]
Together with \eqref{eq:S_z_Ku} and another application of \eqref{eq:int_w_nabla_u}, this shows the desired statement.
\end{proof}

\subsection{Proof of Theorem~\ref{thm:int_w_nabla_u}}
Let $w\in\fconvs$ and $u\in\fconvcd$ be as in the statement of the theorem. Since $w$ is proper, there exists $x_o\in\Rn$ such that $w(x_o)<\infty$. For $w_{x_o}(x)=w(w-x_o)$, $x\in\Rn$, it follows from Proposition~\ref{prop:int_ineq_sz_bar} and \eqref{eq:cond_int_w_nabla_u} that
\[
\int_{\dom(u)} (w_{x_o})^*(\nabla u(x)) \d x = \int_{\dom(u)} \big( w^*(\nabla u(x)) + \langle x_o,x \rangle \big) \d x = \int_{\dom(u)} w^*(\nabla u(x)) \d x.
\]
Thus, without loss of generality, we may assume that $x_o=o$. Furthermore, since for any $a\in\R$, we have
\[
\int_{\dom(u)} (w-a)^*(\nabla u(x))\d x = \int_{\dom(u)} w^*(\nabla u(x))\d x + a\, V_n(\dom(u)),
\]
we may assume that $w(o)\leq 0$. This is equivalent to $w\leq \ind_{\{o\}}$ and therefore $w^*\geq 0$. For $j\in\N$ let
\begin{equation}
\label{eq:def_w_j}
w_j = w+\ind_{\{w\leq j\}}
\end{equation}
and note that due to the convergence of the corresponding level sets the sequence $w_j$, $j\in\N$, epi-converges to $w$ as $j\to\infty$. Furthermore, $w_j\in\fconvcd\subset\fconvs$ with $w_j(o)\leq 0$ for $j\in\N$. Moreover, $w_j\geq w$ pointwise. Thus it follows from the continuity of convex conjugation and the equivalent description of epi-convergence on $\fconvf$ that $(w_j)^*$, $j\in\N$, is a sequence of non-negative convex functions in $\fconvf$ that converges pointwise to $w^*$ such that $0\leq (w_j)^*\leq w^*$ for every $j\in\N$.

Next, observe that $w(o)\leq 0$ is equivalent to $0\in \{w\leq 0\}$. Thus, it follows from \eqref{eq:s_z_u} that for $z\in\sn$ also $(s_z w)(o)\leq 0$ and similarly, by \eqref{eq:bar_u}, that $\bar{w}(o)\leq 0$. Thus, we may repeat the considerations above for these functions and further note that \eqref{eq:s_z_u} and \eqref{eq:def_w_j} show that $(s_z w)_j=s_z w_j$ and similarly $(\bar{w})_j = \bar{(w_j)}$ for $j\in\N$. By Proposition~\ref{prop:int_ineq_sz_bar} we now have
\begin{equation}
\label{eq:int_wj_nabla_u}
\int_{\dom(u)} (w_j)^*(\nabla u(x)) \d x \geq \int_{\dom(s_z (u))} (s_z w_j)^*(\nabla(s_z u)(x)) \d x \geq \int_{\dom(\bar{u})} (\bar{w}_j)^*(\nabla \bar{u}(x)) \d x
\end{equation}
for every $z\in\sn$ and $j\in\N$. Furthermore, \eqref{eq:cond_int_w_nabla_u} and the dominated convergence theorem show that
\begin{align*}
\lim_{j\to\infty} \int_{\dom(u)} (w_j)^*(\nabla u(x)) \d x  &= \lim_{j\to\infty} \int_{\Rn} (w_j)^*(x) \d\MAp(u;x)\\
&= \int_{\Rn} w^*(x) \d\MAp(u;x)\\
&= \int_{\dom(u)} w^*(\nabla u(x)) \d x.
\end{align*}
Since corresponding statements also hold for the sequences $(s_z w_j)^*$, $j\in\N$, and $(\bar{w}_j)^*$, $j\in\N$, we obtain \eqref{eq:int_ineq_sz_bar} after taking limits in \eqref{eq:int_wj_nabla_u}.
\qed

\subsection*{Acknowledgments}
The first author was supported by the Austrian Science Fund (FWF):\ 10.55776/J4490 and\linebreak\mbox{10.55776/P36210}. The second author was supported by the Austrian Science Fund (FWF):\ 10.55776/P34446 and, in part, by the Gruppo Nazionale per l’Analisi Matematica, la Probabilit\'a e le loro Applicazioni (GNAMPA) of the Istituto Nazionale di Alta Matematica (INdAM).

\footnotesize

\vfill

\parbox[t]{8.5cm}{
Fabian Mussnig\\
Institut f\"ur Diskrete Mathematik und Geometrie\\
TU Wien\\
Wiedner Hauptstra{\ss}e 8-10/1046\\
1040 Wien, Austria\\
e-mail: fabian.mussnig@tuwien.ac.at}

\bigskip

\parbox[t]{8.5cm}{
Jacopo Ulivelli\\
Institut f\"ur Diskrete Mathematik und Geometrie\\
TU Wien\\
Wiedner Hauptstra{\ss}e 8-10/1046\\
1040 Wien, Austria\\
e-mail: jacopo.ulivelli@tuwien.ac.at}

\end{document}